\newcommand{\LandauO}{\mathcal{O}}
\newcommand{\bx}{\bar x}
\newcommand{\bz}{\bar z}
\newcommand{\Opr}{L}
\newcommand{\OpRight}{\Lambda}
\newcommand{\OpLeft}{\Omega}
\newcommand{\Hy}{H} 
\newcommand{\B}{B} 
\def\spec{\Theta}
\def \dtwo#1{\frac{\partial}{\partial {#1}_2}}
\def\deriv#1#2{\frac{\partial #1}{\partial #2}}
\def\denom{1-z^2p_2q_2}
\def\denomm{1-\nub\nuw}
\def\den{(1-\nub \nuw)}
\def\spec{\Theta}
\def\semiring{\mathbb A}
\def\nuw{\nu_{\circ}}
\def\nub{\nu_{\bullet}}
\def\Aw{A_{\circ}}
\def\Ab{A_{\bullet}}
\def\bpsi{\psi}
\def \dtwo#1{\frac{\partial}{\partial {#1}_2}}
\def\change{\Phi}
\def\ichange{\Psi}
\def\Fw{\Upsilon_{\circ}}
\def\Fb{\Upsilon_{\bullet}}
\def\section{\@startsection{section}{1}%
  \z@{.7\linespacing\@plus\linespacing}{.5\linespacing}%
  {\normalfont\Large\bfseries\scshape\centering}}
\def\subsection{\@startsection{subsection}{2}%
  \z@{.5\linespacing\@plus\linespacing}{.5\linespacing}%
  {\normalfont\large\bfseries\scshape}}
\def\subsubsection{\@startsection{subsubsection}{3}%
  \z@{.5\linespacing\@plus\linespacing}{-.5em}
  {\normalfont\large\bfseries}}
\newtheorem{theorem}{Theorem}[section]
\newtheorem{lemma}[theorem]{Lemma}
\newtheorem{conjecture}[theorem]{Conjecture}
\newtheorem{cor}[theorem]{Corollary}
\newtheorem{prop}[theorem]{Proposition}
\theoremstyle{definition}
\newtheorem{remark}[theorem]{Remark}
\newcommand{\beq}{\begin{equation}}
  \newcommand{\eeq}{\end{equation}}
\renewcommand{\epsilon}{\varepsilon}
\newcommand{\m}{\mathfrak m}
\newcommand{\ns}{\mathbb{N}}
\newcommand{\qs}{\mathbb{Q}}
\newcommand{\rs}{\mathbb{R}}
\newcommand{\GK}{\mathbb{K}}
\newcommand{\Sn}{\mathfrak S}
\DeclareMathOperator{\Pol}{Pol}
\DeclareMathOperator{\vv}{v}
\DeclareMathOperator{\ff}{f}
\DeclareMathOperator{\gen}{g}
\DeclareMathOperator{\ee}{e}
\DeclareMathOperator{\he}{h}
\newcommand{\eps}{\epsilon}
\newcommand{\fps}{formal power series}
\newcommand{\gf}{generating function}
\newcommand{\gfs}{generating functions}
\def\emm#1,{{\em #1}}
\newcommand{\Maple}{{\sc Maple}}
\begin{document}
\date{\today}
%
\title{The Ising model on cubic maps: arbitrary genus}

\author[M. Bousquet-M\'elou]{Mireille Bousquet-M\'elou}

\thanks{MBM was partially supported by the ANR projects DeRerumNatura (ANR-19-CE40-0018), Combiné (ANR-19-CE48-0011), and CartesEtPlus (ANR-23-CE48-0018). AC is fully supported by the Austrian Science Fund (FWF) 10.55776/F1002. BL was partially supported by CartesEtPlus (ANR-23-CE48-0018).}

\address{MBM: CNRS, LaBRI, Universit\'e de Bordeaux, 351 cours de la
  Lib\'eration,  F-33405 Talence Cedex, France} 
\email{bousquet@labri.fr}

\author[A. Carrance]{Ariane Carrance}
\address{AC: Fakultät für Mathematik, Universität Wien, Oskar-Morgenstern-Platz 1, 1090 Vienna, Austria} 
\email{ariane.carrance@univie.ac.at}

\author[B. Louf]{Baptiste Louf}
\address{BL: CNRS, IMB, Universit\'e de Bordeaux, 351 cours de la
  Lib\'eration,  F-33405 Talence Cedex, France} 
\email{baptiste.louf@math.u-bordeaux.fr}

\keywords{Combinatorial maps, Ising model, generating functions, partial differential equations, KP hierarchy}

\makeatletter
\@namedef{subjclassname@2020}{%
  \textup{2020} Mathematics Subject Classification}
\makeatother

\subjclass[2020]{Primary 05A15 -- Secondary 82B20, 37K10}

\begin{abstract} 
  We design a recursive algorithm to compute the partition function of the Ising model, summed over all cubic maps with fixed size and genus. This partition function counts vertex colorings  in black and white, with a weight $\nub$ (resp. $\nuw$) per edge having two black (resp. white) endpoints.
  The algorithm runs in polynomial time, which is much faster than methods based on a Tutte-like, or topological, recursion.

  We construct this algorithm out of a partial differential equation that we derive from the first equation of the KP hierarchy satisfied by the generating function of bipartite maps. This series is indeed related to the Ising partition function by a classical change of variables. We also obtain inequalities on the coefficients of this partition function. This  should be useful for a probabilistic study of cubic Ising maps whose genus grows linearly with their size, in analogy to what was done recently for cubic maps with no additional model.
\end{abstract}

\maketitle

\section{Introduction}

\emph{Combinatorial maps} are discrete surfaces formed by gluing polygons along their sides, or alternatively, graphs embedded on surfaces. Their study goes back to the 60's, where Tutte enumerated {families} of \emph{planar maps} (\emm i.e., maps drawn on the sphere) in a series of papers, including~\cite{tutte-census-maps,tutte-triangulations,tutte-general}. Later on, his approach was extended to maps {drawn} on a surface of arbitrary fixed genus, {yielding} algorithms that compute the numbers of maps recursively in the  genus and the \emm size,  (the number of edges), and 
asymptotic enumerative results as the size  grows large, but the  genus is fixed~\cite{walsh-lehman-I,Bender-Canfield-orientable}. These algorithms were later shown to fit in a larger framework of enumerative geometry, called 
\emph{topological recursion} (see~\cite{eynard-book} and references therein). However,  algorithms based on topological recursion have a superexponential runtime in the genus, and they cannot be used in practice to compute numbers of maps beyond a very small genus.

Fortunately, there are faster ways of computing these numbers: these alternative methods rely on the fact that the generating function of maps satisfies the \emph{KP hierarchy} (KP for Kadomtsev/Petviashvili), an integrable family of nonlinear PDEs, which first arose in
mathematical physics; see~\cite{MiwaJimboDate} and references therein. The KP hierarchy is an extension of the more classical \emph{KdV hierarchy} (Korteweg/de Vries), which models waves in shallow water, but  also occurs in enumerative geometry~\cite{Witten,Kontsevich}. Goulden and Jackson proved that the generating function of bipartite maps satisfies the KP hierarchy~\cite{goulden-jackson-KP}, and derived from this  
a simple recursion for the number of \emm triangulations, (maps whose faces are triangles) {of fixed size and} genus, see~\eqref{GJ-rec}. This recursion yields at once a polynomial time algorithm that computes the associated numbers. It then turned out that this recursion had already {appeared} in the physics literature{, see for instance}~\cite{KKN}.

Around the same time, recursions for another model of enumerative geometry, \emph{Hurwitz numbers}, were obtained thanks to the \emph{Toda hierarchy} (an extension of the KP hierarchy), see~\cite{Okounkov,DYZ}.

Later on, several other fast recursions for counting combinatorial maps were derived thanks to integrable hierarchies~\cite{CarrellChapuy,Kazarian-Zograf,LoufFormula,BCD-non-oriented}.  
Let us mention that these compact recursions still resist any direct combinatorial interpretations, except in a few specific cases~\cite{chapuy-structure-unicellular,CFF13,Louf-bijection,schabanel-bij}.

In the language of physics, maps
provide a model of two-dimensional quantum gravity ``without matter''. But it is natural to consider {more generally} maps endowed with a model of statistical mechanics (see~\cite{DFGZJ} for a detailed review). In this paper, we consider the particular case of the \emph{Ising model}, which consists in bicoloring the vertices  of our maps,
and introducing a weight on \emm monochromatic, edges, that is, edges with both endpoints of the same color. The \emm partition function, then counts bicolored maps of fixed size and genus, with  weighted monochromatic edges.
In a physics setting, the two possible colors are seen as $\pm1$ spins. The Ising model was first introduced a century ago on fixed lattices~\cite{Lenz,Ising}, and has been an extremely active field of study ever since. The Ising model on random maps was introduced in the eighties
as a model of ``2D quantum gravity with matter''~\cite{Bershadsky}.
It has been solved exactly on several families of planar maps~\cite{Ka86,BK87,mbm-schaeffer-ising,eynard2matrix}, 
and shown to satisfy the topological recursion, see \emm e.g.,~\cite{eynard-book}.  However, once again the {underlying algorithm that computes the Ising partition function for maps of fixed size and genus is}
exponential in the genus. Our main result is to give a polynomial 
algorithm that computes these partition functions. We restrict our attention to \emm cubic, maps, that is, maps in which all vertices have degree $3$ (Figure~\ref{fig:ising-ex}). A simple duality transforms these Ising cubic maps in triangulations with bicolored \emm faces,.

\begin{figure}[htb]
  \centering
\includegraphics[scale=0.9]{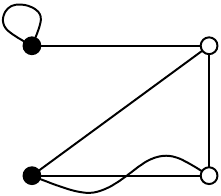}
  \caption{An Ising cubic map of genus $1$ having $2n=4$ vertices, $2$ faces, $3n=6$ edges, and $1$ monochromatic black (resp. white) edge. Its half-edges can be labeled in  $(6n)!=12!$ ways, and a root corner can be chosen in $6n=12$ ways.}
  \label{fig:ising-ex}   
\end{figure}

Our first main result deals with \emm labeled, cubic maps (say, with $3n$ edges), where each half-edge  carries a label between $1$ and $6n$, all labels being distinct.

\begin{theorem}
\label{thm PDE intro}
Let $I_{n,g}(\nub,\nuw)$ be the polynomial that counts labeled cubic maps with~$2n$ vertices {(and thus $3n$ edges)} colored either black or white, with a weight $\nuw$ (resp. $\nub$) per white-white (resp. black-black) edge. 

The series $I:=\sum_{n,g\geq 0}t^{3n}s^g I_{n,g}(\nub,\nuw){/(6n)!}$ satisfies an explicit polynomial PDE in the variables $t$, $\nuw$ and $\nub$, whose coefficients are polynomials in $t$, $\nuw$, $\nub$ and $s$. It has degree $2$ and order~$4$, see Theorem~\ref{thm:PDE-Ising} for details. This equation yields an algorithm to compute the polynomials $I_{n,g}$ in polynomial time and space with respect to $n$ and $g$.  
\end{theorem}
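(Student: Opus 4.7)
My strategy is to exploit the change of variables (alluded to in the abstract) that encodes Ising cubic maps as bipartite maps, and then to transport to $I$ the first KP equation that is known to hold for the bipartite map generating function.

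The starting point is the generating function $B$ of labeled bipartite maps, weighted by power-sum variables $p_1, p_2, p_3, \ldots$ marking the degrees of, say, black faces---the standard setup in which Goulden and Jackson \cite{goulden-jackson-KP} proved that $B$ satisfies the KP hierarchy. Its first equation is a polynomial PDE of order~$4$ and degree~$2$ in $p_1, p_2, p_3$, with the genus variable $s$ playing the role of a formal parameter. Next, I would make explicit the bijective correspondence linking Ising cubic maps (with $2n$ vertices, $3n$ edges, weights $\nub, \nuw$) to a distinguished subfamily of bipartite maps: the usual recipe contracts the bichromatic edges of an Ising map, so that monochromatic clusters become vertices of a bipartite map in which one color class has degree~$3$. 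This yields an explicit polynomial substitution of $p_1, p_2, p_3, \ldots$ in terms of $t, \nub, \nuw$, involving only finitely many $p_i$'s (here the cubicity hypothesis is crucial), under which $B$ specializes to~$I$.

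Plugging this substitution into the first KP equation and applying the chain rule then produces a PDE for $I$ in the variables $t, \nub, \nuw, s$. Since the substitution is polynomial and the KP equation has finite order and degree, one obtains, after clearing denominators, a polynomial PDE of order~$4$ and degree~$2$ with coefficients polynomial in $t, \nub, \nuw, s$, as announced. Finally, reading this PDE coefficient by coefficient in $t$ and $s$ provides a recurrence that determines $I_{n,g}$ from finitely many $I_{n',g'}$ with smaller combined index (say, $2n' + 6g' < 2n + 6g$, as dictated by Euler's formula). Each recursive step requires only polynomial arithmetic on polynomials in $\nub, \nuw$ of degree $O(n)$, yielding the claimed polynomial time and space complexity.

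The main obstacle is the second step---identifying the precise substitution of the $p_i$'s and performing the chain-rule computation cleanly. One must verify that only a bounded number of $p_i$'s are activated (which is exactly where cubicity must be invoked, preventing higher-degree face contributions) and check that no divergent or non-polynomial terms appear on either side. Without this finiteness, the chain rule would bring in infinitely many derivatives and the KP equation would not descend to a usable PDE in three variables; once finiteness is secured, the bounded order and degree of the KP equation transfer mechanically to~$I$.
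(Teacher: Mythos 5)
Your overall strategy---transport the first KP equation for bipartite maps to $I$ through a change of variables---is indeed the one the paper follows. But the plan glosses over two non-trivial steps, and the combinatorial correspondence you describe is not the right one.

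First, the correspondence. You propose to \emph{contract} bichromatic edges so that ``monochromatic clusters become vertices.'' The paper goes in the opposite direction: starting from an Ising cubic map, one \emph{inflates} each edge into a properly bicolored chain of degree-$2$ vertices (chains of odd length for monochromatic edges, even for bichromatic ones), producing a bipartite map whose vertices have degree $2$ or $3$; conversely one erases the degree-$2$ vertices. Contracting monochromatic clusters would not produce bounded degrees, and contracting bichromatic edges would not produce a bipartite map, so your description would not yield the finite substitution you need.

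Second, the step you call ``chain rule'' conceals the real difficulty. The first KP equation for $H$ involves $\Hy_{1,1}$, $\Hy_{1,3}$, $\Hy_{1,1,1,1}$, i.e.\ derivatives in $p_1$ and $p_3$. But $B$ is the specialization of $H$ at $p_1=q_1=0$, $p_3=q_3=1$, so $B$ no longer depends on $p_1$ or $p_3$; the derivative $\partial H/\partial p_1$ evaluated at $p_1=0$ is \emph{not} recoverable from $B$ by any chain rule. The paper's Proposition~\ref{prop:spec} supplies the missing ingredient: a species-theoretic construction (adding chains ending in marked leaves, or marking a degree-$3$ vertex) that re-expresses these specialized derivatives as explicit differential operators in $z,p_2,q_2$ acting on $B$. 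Only after this re-expression can one apply the (rational, not polynomial) change of variables of Proposition~\ref{prop:change} and Lemma~\ref{lem_change}. Without this step your plan does not descend to a closed PDE.

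Third, the claim that the resulting PDE ``yields an algorithm'' by reading off coefficients is also not automatic. The linear operator $\OpLeft_n$ governing the coefficient of $t^{n+2}$ has a non-trivial kernel when restricted to arbitrary polynomials in $\nub,\nuw$ (Remark~\ref{rem:non-injective}); the uniqueness and the concrete recursion require both the degree bound and the $\nub\leftrightarrow\nuw$ symmetry (Lemma~\ref{lem:kernel} and Remark~\ref{rem:compute}). Your proposed Euler-formula-based ordering on $(n,g)$ does not by itself resolve the degenerate boundary coefficients. So while the high-level idea matches the paper's, these three gaps---the inflation-versus-contraction correspondence, the combinatorial re-expression of the $p_1,p_3$ derivatives, and the kernel analysis needed for uniqueness---are precisely where the work lies.
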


With $3$ edges for instance, that is, $n=1$, the genus can be $0$ or $1$, and one has
  \[
    I_{1,0}
    = 5! ( 2+  4\nuw^3+ 4\nub^3+ 6 \nub\nuw), \qquad I_{1,1}
    =5!(2 + \nuw^3+ \nub^3),
  \]
  as illustrated  in Figure~\ref{fig:small-ising}.

Note that since we have two different Ising weights $\nuw$ and $\nub$, this is equivalent to studying the Ising model coupled with a magnetic field. The Ising model without magnetic field corresponds to setting $\nuw=\nub$.

A corollary of our main theorem is the following enumerative bound, dealing now with cubic maps that carry no label but have a distinguished root corner (see Section~\ref{sec:def} for precise definitions).

\begin{cor}\label{thm_bounds}
  Let $\vec I_{n,g}(\nub,\nuw)=  I_{n,g}(\nub,\nuw)/(6n-1)!$ be the the polynomial that counts rooted unlabeled cubic maps with~$2n$ vertices  colored either black or white, with a weight $\nuw$ (resp. $\nub$) per white-white (resp. black-black) edge. For $\nub,\nuw>0$ and $n\geq 5$ the following inequality holds: 
\beq
  \label{ineq}
n\vec I_{n,g}(\nub,\nuw)\geq C(\nub,\nuw)\left(n^3\vec I_{n-2,g-1}(\nub,\nuw)+\sum_{i+j=n-2\atop h+k=g}i\vec I_{i,h}(\nub,\nuw)j\vec I_{j,k}(\nub,\nuw)\right)
\eeq
where
\[
  C(\nub,\nuw)=\frac{\min(\nuw^2,\nub)^4}{\nuw^2+\nub+\frac{\nuw}{\nub}+\frac{1}{\nuw}}.
\]
\end{cor}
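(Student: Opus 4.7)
The strategy is to derive~\eqref{ineq} directly from the explicit PDE of Theorem~\ref{thm:PDE-Ising}, whose existence is the content of Theorem~\ref{thm PDE intro}. Concretely, I would extract from that PDE the recursion satisfied by the polynomials $\vec I_{n,g}(\nub,\nuw)$, identify the three contributions featured in~\eqref{ineq} as the ``leading'' terms of that recursion, and control every remaining contribution by exploiting the fact that each $\vec I_{n,g}(\nub,\nuw)$ is a polynomial in $\nub,\nuw$ with nonnegative coefficients (since it counts weighted Ising cubic maps).

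To extract the recursion, I would take the coefficient of $t^{3n}s^g$ in the PDE and then rescale by $(6n-1)!$. Each $\partial_t^a$ becomes a falling factorial $3n(3n-1)\cdots(3n-a+1)$, each $\partial_s^d$ becomes $g(g-1)\cdots(g-d+1)$, and each product of two derivatives becomes a convolution over splittings $n_1+n_2$ and $g_1+g_2$. The result is a relation of the schematic form
\[
\sum_\alpha c_\alpha(\nub,\nuw)\,n^{p_\alpha}\,R_\alpha\bigl(\vec I_{\bullet,\bullet}\bigr)=0,
\]
where each $R_\alpha$ is either a single polynomial $\vec I_{n',g'}$ or a convolution $\sum i^{q_1}j^{q_2}\vec I_{i,h}\vec I_{j,k}$, and each $c_\alpha(\nub,\nuw)$ is a Laurent monomial. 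Among these terms, I would single out three: a contribution $c_0(\nub,\nuw)\cdot n\vec I_{n,g}$ coming from a linear monomial in $\partial_t I$; a contribution proportional to $n^3\vec I_{n-2,g-1}$ arising from a $\partial_t^3\partial_s I$-type monomial (the genus-lowering term); and a contribution proportional to the convolution $\sum_{i+j=n-2,\,h+k=g}i\vec I_{i,h}\,j\vec I_{j,k}$ coming from a $(\partial_t I)^2$-type product. These three are the ones appearing in~\eqref{ineq}.

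The main obstacle is the bookkeeping for the remaining terms of the recursion. Those that end up on the side of $n\vec I_{n,g}$ can be discarded without loss, since each $\vec I_{n',g'}(\nub,\nuw)\ge 0$; the delicate ones are those that compete on the opposite side. Each such term has the shape $m(\nub,\nuw)\cdot n^q\cdot R(\vec I_{\bullet,\bullet})$, where $m$ is a Laurent monomial and $R$ is a value $\vec I_{n',g'}$ or a convolution of the same shape as in~\eqref{ineq}. I would bound each such $m$ by one of the four monomials $\nuw^2,\ \nub,\ \nuw/\nub,\ 1/\nuw$ whose sum appears in the denominator of $C(\nub,\nuw)$: these four ``worst'' monomials are precisely the ones produced by the PDE after differentiation. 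Factoring these out and dividing through by $(\nuw^2+\nub+\nuw/\nub+1/\nuw)$ then produces the claimed inequality, while the numerator $\min(\nuw^2,\nub)^4$ arises because turning each $R$ into one of the two allowed shapes in~\eqref{ineq} requires absorbing at most four missing monomial factors of the form $\nub$ or $\nuw^2$ (the ``cost'' of adjusting degrees in $\nub$ and $\nuw$ between $\vec I_{n',g'}$ evaluations). The assumption $n\ge 6$ is what guarantees that all indices $n'$ appearing in the recursion lie in the range where $\vec I_{n',g'}$ is well defined (rather than in a small-$n$ regime where the convolution degenerates), so that no artificial boundary terms need to be tracked; the finitely many small cases are simply excluded from the statement. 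Putting these estimates together rearranges into~\eqref{ineq} with the constant $C(\nub,\nuw)$ stated in the corollary.
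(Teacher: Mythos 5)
Your high-level strategy is the same as the paper's — start from the PDE of Theorem~\ref{thm:PDE-Ising}, extract coefficients, exploit positivity — but the proposal skips the single idea that makes the argument work, and it mischaracterizes the PDE in a way that would derail the bookkeeping.

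First, a factual slip: the PDE in Theorem~\ref{thm:PDE-Ising} has no $\partial_s$. It is a PDE in $t$, $\nub$, $\nuw$ only, with $s$ appearing as a scalar coefficient. The genus-lowering term $n^3\vec I_{n-2,g-1}$ comes from the multiplication by $s$ in the term $\tfrac{s}{12}\Lambda^4 I$ (extracting $s^g$ picks up the genus-$(g-1)$ coefficient), not from any ``$\partial_t^3\partial_s I$-type monomial''. Your proposed falling-factorial dictionary for $\partial_s^d$ is vacuous here and suggests you are imagining a different PDE than the one you would actually be working with.

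Second, and more seriously, the plan does not engage with the fact that $\Lambda$ and $\Omega$ mix $\partial_t$ with $\partial_\bullet$ and $\partial_\circ$, with coefficients such as $(1-\nub\nuw)$ that are \emph{not} nonnegative. Nonnegativity of the coefficients of $\vec I_{n,g}$ alone is insufficient to sign-control an expression like $(D_t + D_\circ - D_\bullet - 1)\circ\Lambda\,I$. The property the paper actually uses is stronger: $I$ lives in the semiring $\semiring$ of series supported on monomials $s^g\nub^a\nuw^b t^m$ with $m\geq a+b$ (because an Ising cubic map cannot have more monochromatic edges than edges), and therefore $(D_t - D_\circ - D_\bullet)$ sends $\semiring$ into itself. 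This is the engine of the entire argument: it lets one rewrite $\Lambda = 2t^2(\nuw^2(D_t-D_\bullet) + \nub(D_t-D_\circ) + \nuw\partial_\bullet + \partial_\circ)$ and discard the last two terms, and similarly bound $\Omega$ from above after throwing away a manifestly-nonnegative square. The paper sets up a partial order $\succeq$ on differential operators, proves it is compatible with composition, and carries out all bounds at the operator level \emph{before} evaluating at numerical $\nub, \nuw$ and extracting $t^{3n}s^g$. Without this, the ``bound each Laurent monomial $m$ by one of the four monomials'' step in your sketch has no content: the terms you need to control are not scalar multiples of $\vec I_{n',g'}$ but partial derivatives in $\nub,\nuw$ of these polynomials, and there is no coefficientwise comparison to lean on.

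Finally, your accounting for the constant is off. The power $\min(\nuw^2,\nub)^4$ does not come from ``absorbing at most four missing monomial factors''; it comes from the lower bound $\Lambda^k \succeq 2^k t^{2k}\min(\nuw^2,\nub)^k D_t^k + (\text{terms in }\semiring)$ applied with $k=4$ and $k=2,2$ (so that both the $\Lambda^4 I$ and the $(\Lambda^2 I)^2$ term contribute $\min(\nuw^2,\nub)^4$ overall). The denominator $\nuw^2 + \nub + \nuw/\nub + 1/\nuw$ comes from an upper bound on $\Omega$ after multiplying by $t^2\nub\nuw$ to clear denominators. These identifications are specific consequences of the operator inequalities, not of a term-by-term recursion. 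You would also need to observe that the last two terms $t(\cdot)\Lambda^2 I$ and $t^5 Q$ on the right-hand side of the PDE are nonnegative and can simply be dropped. So while the high-level plan (PDE $\to$ extract $\to$ positivity) is correct, the proof as sketched has a genuine gap: the semiring/operator-order machinery, which is what makes the positivity argument actually go through in the presence of $\partial_\bullet,\partial_\circ$, is missing.
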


This bound can be seen as a weak Ising counterpart of the Goulden--Jackson recurrence relation~\cite{goulden-jackson-KP} giving the number $\vec M_{n,g}$ of uncolored rooted cubic maps with $2n$ vertices and genus $g$:
   \beq\label{GJ-rec}
    (n+1)  \vec M_{n,g}=4 n (3n-2)(3n-4) \vec M_{n-2,g-1} +4 \sum_{i+j=n-2 \atop h+k=g} (3i+2)(3 j+2) \vec M_{i,h}\vec M_{j,k},
  \eeq
  with initial condition $\vec M_{n,g}= \delta_{n,0} \delta_{g,0} -\frac 1 2 \delta_{n,-1} \delta_{g,0}$ for $n\le 0$ or $g<0$.
A bound on $\vec M_{n,g}$, derived from this recursion and analogous to~\eqref{ineq}, has been a crucial ingredient in a work of Budzinski and the third author that establishes the local convergence of random high genus triangulations~\cite{BudzLouf}. We expect that the above bound will play the same role in the study of high genus Ising triangulations (or cubic maps).

\medskip
Our approach starts from the Goulden--Jackson result, and more precisely from the first equation of the KP hierarchy. We first derive from it a PDE for the series counting bipartite maps with vertices of degree $2$ and $3$ only. We then
note that the latter series coincides with the Ising generating function of cubic maps up to a change of variables. It is worth mentioning that all known enumerative results on the Ising model on maps rely on this correspondence. 
We then convert the PDE obtained for bipartite maps into  the PDE of Theorem~\ref{thm:PDE-Ising}. This PDE allows us to compute  the polynomials $I_{n,g}$ of Theorem~\ref{thm PDE intro} recursively. The underlying recurrence relation involves not only the size and genus, as in  Goulden and Jackson's recursion~\eqref{GJ-rec}, but also the number of black and white monochromatic edges. This is to be expected given the higher complexity of our model.

The structure of the paper is as follows. In Section~\ref{sec:bip}, we recall the first equation of the KP hierarchy satisfied by the \gf \ of labeled bipartite maps, and derive from it
a PDE for bipartite maps having only vertices of degree 2 and 3. In Section~\ref{sec:Ising}, we first explain the correspondence between these bipartite maps and cubic maps endowed with the Ising model, and then derive from it  a  PDE for the Ising series. In Section~\ref{sec:unique}, we show that this PDE, along with minimal assumptions, characterizes the \gf \ of Ising cubic maps. The underlying recurrence relation yields the desired polynomial-time algorithm to compute its coefficients.
In Section~\ref{sec:special cases}, we consider the specialization of this PDE to three cases: planar cubic maps,  uncolored cubic maps (where we recover the above Goulden--Jackson recursion) and finally unicellular Ising cubic maps (maps with a unique face). In Section~\ref{sec:inequalities} we prove the bound of Corollary~\ref{thm_bounds}. Finally, in Section~\ref{sec:rooted}, we derive a PDE on the \gf \ of \emph{rooted} unlabeled Ising cubic maps, and check it in the planar case, where a rational parametrization of the series is known.
A \Maple\ session and a {\sc SageMath} file, available on our web pages, accompany this paper.

\section{Bipartite maps with vertex degrees 2 and 3}
\label{sec:bip}

\subsection{Definitions}
\label{sec:def}

A map is a {\em cellular embedding} of a connected graph (with loops and multiple edges allowed) into a 2-dimensional compact {orientable} surface
without boundaries; here, {\em embedding} means that vertices are distinct and edges do not cross
except at vertices, while {\em cellular} means that the complement of the graph in the surface is homeomorphic to a disjoint union of open disks, called the \emm faces, of the map.
Maps are considered up to orientation-preserving homeomorphisms. Equivalently, a map is defined from its underlying graph by cyclically ordering the half-edges around the vertices. We can thus define the {\em corners} incident to a vertex, as the angular sectors between consecutive edges incident to this vertex. The \emm genus, $\gen(\m)$ of a map $\m$ is the genus of the underlying surface. It can be computed by Euler's formula $\vv(\m)+\ff(\m) -\ee(\m) =2-2\gen(\m)$, where the functions $\vv$, $\ee$ and $\ff$ give  the number of vertices, edges and faces of the map, respectively. A {\em rooted} map is a map with a distinguished corner, called \emm root corner,.
The \emph{dual} of a map $\m$, denoted $\m^*$, is the map obtained by placing a 
vertex of $\m^*$ in each face of $\m$ and an edge of $\m^*$ across each
edge of $\m$. A map is \emm cubic, if all its vertices have degree $3$. Its dual is then  a \emm triangulation,, meaning that all its faces have degree~$3$.

\medskip

We consider in this section  edge-labeled bipartite maps  of arbitrary genus, in which all vertices have degree $1$, $2$ or $3$. We call such maps, for short, \emm maps of bounded degree,. We will count families of such maps by edges (variable $z$), faces (variable $u$), white vertices of degree $i \in \{1, 2, 3\}$ (variable $p_i$), and black vertices of degree~$i$ (variable $q_i$).
We sometimes call these variables the \emm weights, of the corresponding items (edges, faces, etc.).  Our \gfs\ are exponential in the number of edges. In particular, we denote by $\Hy$ the \gf\ of bipartite maps (also called \emm hypermaps,, hence the notation) of bounded degree, that is,
\beq\label{H-def}
\Hy= \sum_{\m} \frac{z^{\ee(\m)} }{\ee(\m)!} u^{\ff(\m)} \prod_{i=1}^3 p_i^{\vv_i^\circ(\m)}  q_i ^{\vv_i^\bullet(\m)},
\eeq
where the function $\vv_i^\circ$ (resp.  $\vv_i^\bullet$) counts white  (resp. black) vertices of degree~$i$. The sum runs over all labeled bipartite maps $\m$ satisfying the above bounded degree condition. We call \emm leaves, the vertices of degree $1$. Note that the variable~$z$ is redundant, since
\[
  \ee(\m)=\sum_{i} i  \vv_i^\circ(\m) = \sum_i i \vv_i^\bullet(\m).
\]
This series starts as follows:
\begin{multline*}
  \Hy=    
u  p_1  q_1 z 
  +u\left( p_1^{2} q_2  + p_2 \,q_1^{2}  +p_2 q_2 \,u\right) \frac{z^{2}}2
  \\ +u \left(p_1^{3} q_3  + p_3 \,q_1^{3}  +3 p_1 p_2 q_1 q_2 + 3 p_1 p_2 q_3 \,u+3p_3 q_1 q_2 \,u+ p_3 q_3 \,u^{2}+ p_3   q_3  \right) \frac{z^{3}}3
  +\LandauO\left(z^{4}\right).
\end{multline*}
See Figure~\ref{fig:small-maps} for an illustration of the coefficient of $z^3/3!$. These labeled maps, say with $n$ edges, can be encoded by three permutations in the symmetric group $\Sn_n$, denoted $\sigma_\circ$, $\sigma_\bullet$ and $\phi$, that describe the cyclic orders of edge labels around white vertices, black vertices, and faces, respectively. The product $\sigma_\circ \sigma_\bullet\phi$ is the identity, and the group generated by these three permutations acts transitively on $\{1, \ldots, n\}$. This encoding is central in the proof that bipartite maps satisfy the KP hierarchy~\cite{goulden-jackson-KP}.

\begin{figure}[htb]
  \centering
  \scalebox{0.9}{\input{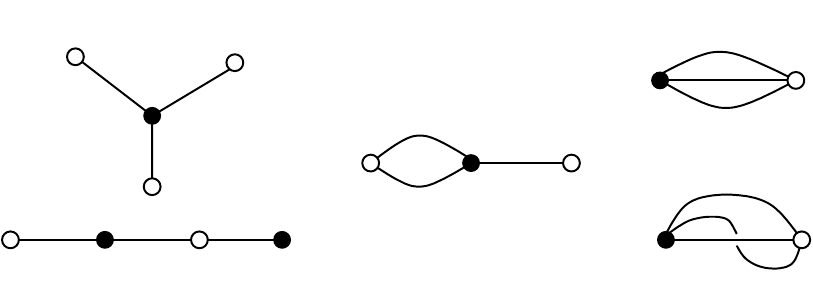_t}}
  \caption{Bipartite maps with $3$ edges and vertex degrees at most $3$. The weights keep track of the number of labelings of the edges and of the exchange of colors.}    
  \label{fig:small-maps}   
\end{figure} 

Given a field~$\GK$ and a variable $x$, the ring of polynomials in $x$ with coefficients in~$\GK$  is denoted $\GK[x]$, the field of rational functions $\GK(x)$, and the ring of \fps\ $\GK[[x]]$. This notation is extended to fractions and series in several variables. For instance, the above series $H$ belongs to $\qs[u, p_1, p_2, p_3, q_1, q_2, q_3][[z]]$.

\subsection{The first KP equation}

In 2008, Goulden and Jackson proved that the series $\Hy$ defined by~\eqref{H-def} satisfies the partial differential equations of the KP hierarchy~\cite[Thm.~3.1]{goulden-jackson-KP}. This is actually true as well without the bound on
the degrees. Here we shall use the first of these partial differential equations (PDE) only. It involves derivatives with respect to the three variables $p_i$.

\begin{theorem}\label{thm:KP}
  The above series $\Hy$ satisfies the following fourth order partial differential equation:
  \beq\label{eq_KP}
    \Hy_{1,3}=\Hy_{2,2}+\frac{1}{12}\Hy_{1,1,1,1}+\frac{1}{2}(\Hy_{1,1})^2,
  \eeq
  where an index $i$ indicates a partial derivative in the variable $p_i$.
\end{theorem}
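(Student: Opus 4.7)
The plan is to obtain~\eqref{eq_KP} as a direct specialization of the Goulden--Jackson theorem~\cite{goulden-jackson-KP}. That theorem asserts that the \emph{unrestricted} bipartite-map generating function $\widetilde H$ (with no bound on vertex degrees), viewed as a formal power series in the white-vertex variables $p_1, p_2, \ldots$ with coefficients depending on $z$, $u$ and the $q_i$'s, satisfies the entire KP hierarchy in the $p_i$-variables (the $q_j$'s acting merely as inert parameters throughout). The first equation of that hierarchy, written for the free energy $\widetilde H = \log \tau$ rather than for the $\tau$-function, is literally the identity $\widetilde H_{1,3} = \widetilde H_{2,2} + \tfrac{1}{12}\widetilde H_{1,1,1,1} + \tfrac{1}{2}(\widetilde H_{1,1})^2$ appearing in~\eqref{eq_KP}. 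So the first step is simply to quote this result and its precise form for the free energy.

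Next I would observe that the restricted series $H$ defined in~\eqref{H-def} is obtained from $\widetilde H$ by the substitution $p_i = q_i = 0$ for all $i \geq 4$: this substitution annihilates every map having at least one vertex of degree $\geq 4$, since each such vertex carries a factor $p_i$ or $q_i$ with $i \geq 4$. Because only the three derivatives $\partial/\partial p_1, \partial/\partial p_2, \partial/\partial p_3$ occur in the equation, they commute with this substitution, and the identity therefore descends verbatim to $H$. No ``interaction terms'' can appear in the specialized equation because the four-linear/quadratic structure on the right-hand side is preserved by setting further variables to zero.

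The only point that needs checking is that the normalization used by Goulden--Jackson matches~\eqref{H-def}---in particular, that their weight for a white vertex of degree~$i$ identifies with our $p_i$ (so that the $p_i$'s play the role of the power-sum ``times'' of the KP hierarchy), and that their edge-exponential convention matches the factor $z^{\ee(\m)}/\ee(\m)!$ in~\eqref{H-def}. I expect these conventions to align (the authors' choice of notation strongly suggests so), but the main---indeed the only---obstacle in this proof is this bookkeeping: tracing through the scaling conventions between the standard Sato/KP setup and the combinatorial encoding by the three permutations $\sigma_\circ, \sigma_\bullet, \phi$ used in~\cite{goulden-jackson-KP}. Once this is verified, the proof reduces to ``cite and specialize''.
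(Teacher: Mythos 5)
Your proposal is correct and is essentially the paper's own argument: the theorem is obtained by citing Goulden and Jackson's result \cite[Thm.~3.1]{goulden-jackson-KP} (stated for the unrestricted bipartite generating function), and the restriction to degrees $\le 3$ follows by setting $p_i=q_i=0$ for $i\ge 4$, a substitution that commutes with all derivatives appearing in \eqref{eq_KP}. The paper states this more tersely (remarking only that the result ``is actually true as well without the bound of the degrees''), but your explicit specialization step and your caveat about matching the edge-exponential normalization are exactly the right points to check.
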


Our objective in this section is to derive a PDE for the  specialization of $\Hy$ at $p_1=q_1=0$. That is, we want an equation for bipartite maps having vertices of degree $2$ and $3$ only. For such a map $\m$, we have
\beq\label{v-e}
\ee(\m)= 2 \vv_2^\circ(\m)+ 3 \vv_3^\circ(\m) = 2 \vv_2^\bullet(\m)+ 3 \vv_3^\bullet(\m) ,
\eeq
hence the variables $p_3$ and $q_3$ are redundant if we keep $z$, $p_2$ and $q_2$. Let $\spec$ be the operator that sets the variables $p_1$ and $q_1$ to $0$, $p_3$ and $q_3$ to $1$, and let  $\B:=\spec H$. That is,
\beq\label{B-def}
B= \sum_{\m} \frac{z^{\ee(\m)} }{\ee(\m)!} u^{\ff(\m)} p_2^{\vv_2^\circ(\m)}  q_2 ^{\vv_2^\bullet(\m)},
\eeq
where the sum runs over edge labeled bipartite maps with degrees $2$ and~$3$. 
The set of such maps will be denoted by $\mathcal B$.

The following proposition, based on combinatorial constructions, will allow  us to specialize the PDE~\eqref{eq_KP} at $p_1=q_1=0$, $p_3=q_3=1$ (see Corollary~\ref{cor:KP23}).

\begin{prop}\label{prop:spec}
  Let us introduce the following linear differential operator: 
  \beq\label{op:def}
    \Opr=\frac 2{\denom}\left({z^2q_2}\dtwo{p}+{z} \dtwo{q}\right).
  \eeq
  Then the partial derivatives occurring in Theorem~\ref{thm:KP}, once specialized at $p_1=q_1=0$, $p_3=q_3=1$,
  are given by
  \beq\label{H11-spec}
    \spec H_{1,1}=\Opr^2 B+\frac{uz^2q_2}{\denom},
  \eeq
  \beq\label{H1111-spec}
       \spec H_{1,1,1,1}
    =  \Opr^4 B +\frac{12 z^{6} u \left(q_2^{5}  z^{4}+2 q_2^{2}z+p_2 \right)}{(\denom)^{5}},
  \eeq
  \beq\label{H13-spec}
  \spec H_{1,3}
      =\frac 1 {3}\left(z \frac{\partial}{\partial z}-2p_2\deriv{}{p_2}-1\right)\Opr B,
  \eeq
  and finally
  \[ 
    \spec H_{2,2}=\frac{\partial^2 B}{\partial p_2^2},
  \] 
  where the series $B$ is defined by~\eqref{B-def}.
\end{prop}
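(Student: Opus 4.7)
The plan is to give a combinatorial proof based on a \emph{tail decomposition}: for a labeled bipartite map of bounded degree with a marked white leaf $v_0$, follow the unique maximal simple path starting at $v_0$ whose internal vertices all have degree~$2$; this path either terminates at a vertex of degree~$3$ (the \emph{generic case}) or at another leaf (the \emph{chain case}). The identity for $H_{2,2}$ is immediate, since $\partial^2/\partial p_2^2$ commutes with $\Theta$ and $B = \Theta H$. For the three remaining identities, I will first establish the auxiliary identity $\Theta H_1 = L B$ and then deduce each of them from a tailored case analysis.

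To prove $\Theta H_1 = LB$: in a map $M$ with exactly one white leaf and no other vertex of degree~$1$, the tail at the marked leaf must reach a degree-$3$ vertex~$v$. Removing the leaf together with all intermediate vertices and edges of the tail (keeping~$v$, which then has degree~$2$) produces a map $M' \in \mathcal{B}$ with a distinguished corner at~$v$. In the inverse operation---attaching a chain of alternating degree-$2$ vertices to a corner of some degree-$2$ vertex of $M'$ and terminating it with a white leaf---the factor~$2$ in $L$ counts the two corners of a degree-$2$ vertex, the factor $(1 - z^2 p_2 q_2)^{-1}$ generates the chain of alternating intermediates, and the two numerator summands correspond to whether~$v$ is white (marked via $\partial_{p_2}$) or black (marked via $\partial_{q_2}$). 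Once this is in hand, $\Theta H_{1,3}$ will follow from Euler's identity on the white side, $e(M) = 1 + 2 v_2^\circ(M) + 3 v_3^\circ(M)$, which is valid for maps with exactly one white leaf and no other degree-$1$ vertex; this lets the operator $\tfrac{1}{3}(z \partial_z - 2 p_2 \partial_{p_2} - 1)$ multiply each monomial of $LB = \Theta H_1$ by $v_3^\circ(M)$, which is exactly the multiplicity produced by the additional derivative $\partial/\partial p_3$ at $p_3 = 1$.

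For $\Theta H_{1,1}$, I classify maps with two marked white leaves according to the behavior of their tails: \emph{case~A} (disjoint tails, distinct degree-$3$ endpoints), \emph{case~B} (disjoint tails, common degree-$3$ endpoint), and \emph{case~C} (the two leaves are connected by a single chain which, by connectedness, constitutes the whole map). Cases~A and~B should be produced by $L^2 B = L(LB)$: the second application of $L$ attaches its tail either at a new degree-$2$ vertex of the core (case~A) or at an intermediate vertex of the first tail (case~B: that intermediate then becomes the common degree-$3$ endpoint of both tails in~$M$). Crucially, the attachment vertex of the first tail has degree~$3$ in the intermediate structure and is therefore invisible to the second $L$, so no overcounting occurs; the factor $2!$ from the ordering of the two marked leaves matches the symmetric action of $L^2$. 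Case~C sums explicitly: a chain with $j$ internal black-$2$ and $j-1$ internal white-$2$ vertices contributes, after $\partial^2/\partial p_1^2$, the term $u z^{2j} q_2^j p_2^{j-1}$, and their sum over $j \geq 1$ is exactly $u z^2 q_2/(1 - z^2 p_2 q_2)$.

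The identity for $\Theta H_{1,1,1,1}$ will follow from the same philosophy, with a larger case analysis for four marked leaves. Generic configurations---four tails each reaching a degree-$3$ vertex---are captured by $L^4 B$, while degenerate configurations in which at least one pair of leaves is connected by a pure chain produce the correction. The three summands in the numerator correspond to increasingly degenerate situations (one chain pair together with two ordinary tails, two chain pairs, and a configuration where a chain pair shares structure with the remaining leaves), and each is obtained by direct summation of chain and tail generating functions. The hardest part will be this enumeration: one must track the $4!$ orderings of the leaves, carefully handle the ways chains and ordinary tails can coexist in a connected map, and collect the convolutions of chain generating functions that produce the denominator $(1 - z^2 p_2 q_2)^5$ in the correction term.
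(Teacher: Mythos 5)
Your decomposition by tails is the same as the paper's, and the arguments for $\spec H_{2,2}$, for the auxiliary identity $\spec H_1 = \Opr B$, for $\spec H_{1,3}$ (Euler's relation at the white vertices turning $\partial/\partial p_3$ into $\tfrac13(z\partial_z - 2p_2\partial_{p_2}-1)$), and for $\spec H_{1,1}$ (cases A/B/C, with the chain series supplying the extra term) are all correct and match the paper.

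The gap is in the plan for $\spec H_{1,1,1,1}$, and it is a genuine one. Your classification is: generic maps, in which all four tails reach degree-$3$ vertices, go into $\Opr^4 B$; degenerate maps, in which some pair of leaves is connected by a pure chain, supply the correction. But in a connected map whose vertices all have degree $1$, $2$ or $3$ and which has four white leaves, \emph{no} pair of leaves can be joined by a chain of degree-$2$ vertices: such a chain would be an entire connected component, which leaves the other two leaves stranded. (You used exactly this connectedness observation yourself to dispatch case C of $\spec H_{1,1}$.) So by your own dichotomy the "degenerate'' set is empty and $\spec H_{1,1,1,1}=\Opr^4 B$, which contradicts the correction term in~\eqref{H1111-spec}. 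The real obstruction to $\Opr^4 B$ covering everything is subtler: it is possible for all four tails to end at degree-$3$ vertices in the original map and yet for the iterated tail-removal to reach a pure chain before reaching a core in $\mathcal B$. For instance, take two junctions $v_1, v_2$ joined by a chain, with leaves $1,2$ hanging off $v_1$ and leaves $3,4$ hanging off $v_2$: removing the tails at leaves $4$ and $3$ already reduces the map to a chain between leaves $1$ and $2$, so there is no residual map in $\mathcal B$. Your listed subcases ("two chain pairs'', etc.) also do not correspond to actual configurations.

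The fix is to notice that you do not need a fresh case analysis at all: the identity you already proved for $\spec H_{1,1}$ extends by the same reasoning to say that attaching one more (unweighted, ordered) white leaf corresponds to one more application of $\Opr$. Hence $\spec H_{1,1,1,1} = \Opr^2\, \spec H_{1,1}$, and plugging in~\eqref{H11-spec} gives $\Opr^4 B + \Opr^2\!\left(\dfrac{uz^2q_2}{\denom}\right)$, which after the elementary computation of $\Opr^2$ applied to the chain series yields exactly the stated correction term. This is the route the paper takes, and it avoids the combinatorics you flagged as "the hardest part.''
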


\begin{proof} The enumeration of labeled  objects, and the correspondence between operations on these objects and operations on their \gfs,  is conveniently described  using the notion of \emm species,. We refer to the book by Bergeron, Labelle and Leroux~\cite{BLL-book}, or to the short, self-contained account of~\cite[Sec.~1]{PiSaSo}. The two ingredients that we need here are the  product of two species, in which the labels are distributed over the two objects, and the marking of an unlabeled element, here a vertex.

  \begin{figure}[htb]
    \centering
    \scalebox{0.7}{\input{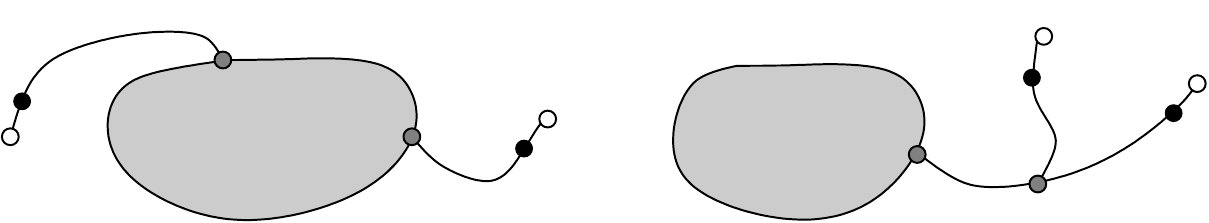_t}}
    \caption{Bipartite maps with two ordered  white leaves and at least one vertex of degree $3$.}
    \label{fig:2-leaves} 
  \end{figure}    
  
  Let us begin with $\spec \Hy_{1,1}$. This series counts maps of bounded degree having exactly two (ordered, unweighted) leaves, both white. There are two types of such maps: with, and without vertices of degree $3$.
  \begin{itemize}
  \item Maps with  no vertex of degree $3$. They are reduced to a chain of alternatingly black and white vertices, with  a white leaf at both ends, oriented from the first to the second leaf. In particular, the chain contains an even number, say $2n$, of edges. A chain of length $2n$ is then simply encoded by a permutation of $\Sn_{2n}$ (giving the sequence of edge labels from end to end), and the contribution of maps of this type is
    \[
      u   \sum_{n\ge 1} \frac{z^{2n}}{(2n)!} (2n)! p_2^{n-1} q_2^n=  \frac{uz^2q_2}{1-z^2p_2q_2}.
    \]
  \item Maps with at least one vertex of degree $3$: in these maps, each of the two leaves lies at the end of a chain of vertices of degree $2$, attached at a vertex of degree $3$ (Figure~\ref{fig:2-leaves}).
    Let us first delete the chain carrying the second leaf: we obtain a new map with bounded degrees and one white leaf. We then repeat this operation with the
   remaining leaf so as to end with a map of degrees $2$ and $3$. Conversely, given a set $\mathcal M$ of labeled bipartite maps with bounded degrees, let $\overrightarrow{\mathcal M}$ be the set of maps obtained as follows: take a map  in $\mathcal M$, choose a vertex $v$ of degree $2$ and one of the two corners at this vertex, and  attach at this corner a chain ending with a marked white leaf; note that the  vertex $v$ has  now degree~$3$. If $M$ denotes  the \gf\ of $\mathcal M$ (with variables $z, u, p_i, q_i$ as in~\eqref{H-def}), the \gf\  of $\overrightarrow{\mathcal M}$ is
    \[
      \overrightarrow M:=   2   p_1 
      \frac{z^2q_2}\denom\frac{\partial M}{\partial p_2}
      +   2   p_1 
      \frac{z}\denom\frac{\partial M}{\partial q_2} = p_1 \Opr M,
    \]
    where   $\Opr$ is defined by~\eqref{op:def}.    The first (resp. second) half of the expression  counts maps such that the chain is attached to a white (resp. black) vertex. In this case, the length of the chain, that is, the number of edges,  has to be    even (resp. odd).

    Now the maps that we need to count are precisely those obtained by first adding to a map of $\mathcal B$  a  chain, ending at a first white leaf, and then a second chain to the resulting map, ending at a second white leaf. The above argument gives the resulting \gf\ as $\Opr^2 B$. This concludes the proof of~\eqref{H11-spec}.
  \end{itemize}

  \medskip
  
  The series $\spec \Hy_{1,1,1,1}$ counts maps {of bounded degree}, having exactly four (ordered, unweighted) leaves, all of them being white. These maps are obtained by adding consecutively two chains ending at marked white leaves to maps counted by $\spec \Hy_{1,1 }$. By the above construction,  $\spec \Hy_{1,1,1,1}= \Opr ^2\spec \Hy_{1,1 }$, and thanks to~\eqref{H11-spec}, this yields the announced result~\eqref{H1111-spec}.
  
  \medskip

  The series $\spec \Hy_{1,3}$ counts maps with {bounded degree} having exactly one (unweighted) leaf, which is white, and in addition a marked white unweighted vertex of degree $3$. These maps are obtained from maps of $\mathcal B$ by first adding a chain ending at a white leaf --- this gives the series $\Opr B$ --- and then marking a white vertex of degree $3$. The resulting \gf\ is
  \[
    \frac\partial {\partial p_3} \Opr B.
  \]
  The maps $\m$ counted by $\Opr B$ satisfy
  \[
    \ee(\m)= 1+ 2 \vv_2^\circ(\m)+ 3 \vv_3^\circ(\m).
  \]
  Hence
  \[
    \frac\partial {\partial p_3} \Opr B =    \frac 1 {3p_3} \left( z\frac\partial {\partial z} -  2 p_2\frac\partial {\partial p_2}-1\right) \Opr B,
  \]
 which gives~\eqref{H13-spec}.

  The final identity is obvious, since the specialization operator $\spec$ commutes with the differentiation with respect to $p_2$.
\end{proof}

We can now convert the KP equation~\eqref{eq_KP} into an equation for bipartite maps with vertices of degree $2$ and $3$.

\begin{cor} \label{cor:KP23}
  The \gf\ $B$ of  bipartite maps having only  vertices of degree $2$ and~$3$, defined by~\eqref{B-def}, satisfies the following partial differential equation:
  \beq\label{eq_KP_bip}
    \frac 1 {3}\left(z\frac \partial{\partial z}
      -2p_2\deriv{}{p_2}-1\right)\Opr B=\\
    \frac{\partial^2 B}{\partial p_2^2}
    +\frac{1}{12}\Opr^4 B+\frac{1}{2}\left(\Opr^2 B\right)^2
    +\frac{uz^2q_2}{\denom}\Opr^2 B+ R,
    \eeq
  where $L$ is defined by~\eqref{op:def} and
\[
    R:= \frac{1}{2}\left(\frac{uz^2q_2}{\denom}\right)^2
    +\frac{ z^{6} u \left(q_2^{5}  z^{4}+2 q_2^{2} z+p_2 \right)}{(\denom)^{5}}.
\]
  This is again a PDE of fourth order in three variables, namely $z$, $p_2$ and $q_2$. 
\end{cor}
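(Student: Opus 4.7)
The plan is to apply the specialization operator $\spec$ (which sets $p_1=q_1=0$ and $p_3=q_3=1$) to both sides of the KP equation~\eqref{eq_KP} of Theorem~\ref{thm:KP}, and then feed in the four identities provided by Proposition~\ref{prop:spec}. Since $\spec$ is simply evaluation of a formal power series at fixed numerical values of the variables $p_1,q_1,p_3,q_3$, it is a ring homomorphism on $\qs[u,p_1,p_2,p_3,q_1,q_2,q_3][[z]]$. In particular it commutes with sums and products, so that $\spec((H_{1,1})^2)=(\spec H_{1,1})^2$, and~\eqref{eq_KP} becomes
\[
  \spec H_{1,3}=\spec H_{2,2}+\frac{1}{12}\spec H_{1,1,1,1}+\frac{1}{2}(\spec H_{1,1})^2.
\]

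Next I would substitute the four expressions from Proposition~\ref{prop:spec} into this identity. The left-hand side is directly $\frac{1}{3}(z\partial_z-2p_2\partial_{p_2}-1)\Opr B$, and the first two terms on the right are immediately $\partial_{p_2}^2 B$ and $\frac{1}{12}\Opr^4 B$ plus the explicit rational contribution
\[
  \frac{z^{6} u \left(q_2^{5}  z^{4}+2 q_2^{2} z+p_2 \right)}{(\denom)^{5}}.
\]
The only piece that requires a small calculation is the square $(\spec H_{1,1})^2$, which expands using~\eqref{H11-spec} as
\[
  \frac{1}{2}(\Opr^2 B)^2+\frac{uz^2q_2}{\denom}\Opr^2 B+\frac{1}{2}\left(\frac{uz^2q_2}{\denom}\right)^2.
\]
Collecting the three terms that do not involve $B$ into the single quantity $R$ yields exactly~\eqref{eq_KP_bip}.

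Finally, the order/variable claim is immediate from the definition of $\Opr$ in~\eqref{op:def}: $\Opr$ is a first-order differential operator in $p_2$ and $q_2$ with coefficients that are rational functions of $z$, $p_2$, $q_2$. Hence $\Opr^4 B$ has order four in $p_2,q_2$ (and the other terms contribute lower-order derivatives, including $\partial_z$ through the left-hand side), so the resulting PDE is of order four in the three variables $z$, $p_2$, $q_2$, as announced.

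There is essentially no hard step here: the statement is a direct algebraic consequence of Theorem~\ref{thm:KP} and Proposition~\ref{prop:spec}. The only point requiring any care is keeping track of the rational ``source'' terms when squaring $\spec H_{1,1}$ and folding them, together with the rational piece coming from $\spec H_{1,1,1,1}$, into the compact expression for $R$.
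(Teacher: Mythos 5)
Your proof is correct and follows exactly the route the paper implicitly takes: apply the evaluation homomorphism $\spec$ to both sides of the KP equation~\eqref{eq_KP}, substitute the four identities of Proposition~\ref{prop:spec}, expand the square $(\Opr^2 B + uz^2q_2/(\denom))^2$, and collect the $B$-free rational terms into $R$. The observation that $\spec$ commutes with the square because it is a ring homomorphism, and the final bookkeeping of the source terms, are precisely the (small) points one needs to spell out; the paper leaves them implicit.
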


\begin{remark}
  Unsurprisingly, the above PDE does not characterize the series $B$. Experimentally, if we prescribe the following form for $B$:
  \[
    B= \sum_{n,i,j} B_{n,i,j}  z^n p_2^i q_2^j ,
  \]
  where the $B_{n,i,j}$ are polynomials in $u$ and
  the sum  is restricted to tuples $(n,i,j) \in \ns^3$ subject to the conditions derived from~\eqref{v-e}:
  \[
n\ge 2, \quad 0 \le i \le n/2,  \quad  0 \le j \le n/2,    \quad   i \equiv -n \!\!   \mod 3,  \quad \text{and} \quad j \equiv -n \! \!  \mod 3,
\]
and then plug this expression in~\eqref{eq_KP_bip}, extracting the coofficient of $  z^n p_2^i q_2^j $ for increasing values of $n$ gives \emm some, relations between the polynomials  $B_{n,i,j}$, but not sufficiently many. However, if we prescribe:
\begin{itemize}
   \item  symmetry  ($B_{n,i,j}= B_{n,j,i}$),
   \item  the values $B_{n,0,0}$  of the polynomials that count (by faces) bipartite \emm cubic, maps with $n$ edges,
  \end{itemize}
  then the solution of the PDE appears to be unique; see our {\sc Maple} session for details. This should be provable along the same lines as  Proposition~\ref{prop:unique} below.
\end{remark}

\begin{remark} 
  The above PDE is not symmetric in $p_2$ and $q_2$, while the series $B$ \emm is, symmetric.
 We can obtain an \emm antisymmetric, PDE for $B$, which in addition does not involve $z$-derivatives, as follows.
  The terms of the PDE~\eqref{eq_KP_bip} that contain $z$-derivatives are $B_{z,p_2}:=\partial ^2B /\partial z \partial p_2$ and  $B_{z,q_2}:=\partial ^2B /\partial z \partial q_2$, and the PDE depends linearly on them. The PDE obtained by exchanging $p_2$ and $q_2$ contains these two terms as well. So we can write both PDEs, solve them for these two derivatives, and finally write $\partial_{q_2}B_{z,p_2}= \partial_{p_2}B_{z,q_2}$. This gives a PDE in $p_2$ and~$q_2$ only, but of order $5$ instead of $4$. Of course, an alternative is to take any (symmetric) combination of the original PDE and of the one obtained by exchanging $p_2$ and~$q_2$ -- but then some $z$-derivatives remain.
\end{remark}

\section{Ising cubic maps}
\label{sec:Ising}

In this section we consider the class of  cubic maps,
labeled on \emm half-edges,. We equip them with an Ising model, meaning that their vertices are colored black and white as in Section~\ref{sec:bip}, but now adjacent vertices may share the same color. In this case we say that the edges that join them are \emm monochromatic, (or, for short, white, or black). Observe that for any cubic map~$\m$, we have $2\ee(\m)=3\vv(\m)$, so that the number of edges (resp. vertices) is a multiple of~$3$ (resp. $2$).

We will count these colored cubic maps --- called \emm Ising maps, henceforth --- by the number of edges (variable  $t$), the genus (variable $s$), the number $\ee^\circ$ (resp. $\ee^\bullet$) of monochromatic white (resp. black) edges (variables $\nuw$ and $\nub$, respectively).  Note that a power $t^{3n}$ corresponds to a map with $3n$ edges and $2n$ vertices. Let $I$ be the exponential \gf\ of Ising  maps, labeled on half-edges. Then:
\beq\label{I-def}
I= \sum_\m    \frac{t^{\ee(\m)}}{(2\ee(\m))!}
s^{\gen(\m)} \nuw^{\ee^\circ (\m)}\nub^{\ee^\bullet (\m)}
=  \left(\frac 1 3 (1+s) + \nuw\nub + (\nub^3+ \nuw^3) (\frac 2 3 + \frac s 6) \right)t^3 + \LandauO(t^6).
\eeq
See Figure~\ref{fig:small-ising} for a justification of the coefficient of $t^3/6!$. Observe that the number of bicolored (also called \emm frustrated,) edges is $\ee^{\bullet\circ}(\m)= \ee(\m)-\ee^\bullet(\m) - \ee^\circ(\m)$, and that the numbers of black and white vertices are given by
\[
  3 \vv^\bullet(\m)= \ee^{\bullet\circ}(\m)+ 2 \ee^\bullet(\m), \qquad \text{and} \qquad 3 \vv^\circ(\m)= \ee^{\bullet\circ}(\m)+ 2 \ee^\circ(\m),
\]
so that these numbers are recorded (be it implicitly) in our series. This means that the Ising model that we address includes a \emm magnetic field,. That is, we can record the difference between the number of black and white vertices.  Indeed, we can rewrite  $I$ as
\[
I= \sum_\m    \frac{t^{\ee(\m)}}{(2\ee(\m))!}
s^{\gen(\m)} \nu^{\ee^\circ (\m)+\ee^\bullet (\m)}c^{\vv^\bullet(\m)-\vv^\circ(\m)},
\]
with $\nu=(\nub\nuw)^{1/2}$ and $c=(\nub/\nuw)^{3/4}$. Also, the above identities imply  that $ \ee^\bullet(\m)-\ee^\circ(\m)$ is always a multiple of $3$.

\begin{figure}[htb]  
  \centering
  \scalebox{0.9}{\input{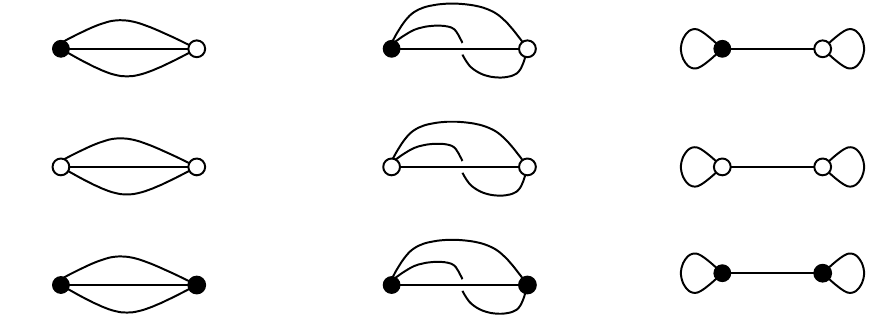_t}}
  \caption{Ising cubic maps with $3$ edges and $2$ vertices. The weights keep track of the number of labelings of half-edges.}
  \label{fig:small-ising} 
\end{figure}  

Starting from an Ising cubic map, one can insert on its edges bicolored chains of vertices of degree $2$ so as to obtain a bipartite map with vertex degrees $2$ and $3$. This allows one to relate the series $I$ to the series $B$ of Section~\ref{sec:bip}. Variants of this  ``trick'' have been used several times in the study of the Ising model on maps~\cite{Ka86,mbm-schaeffer-ising,eynard-book}. We work out its details, in our setting, in Section~\ref{sec:change}. Then we use this in Section~\ref{sec:EDIsing} to convert the PDE satisfied by $B$ into a PDE satisfied by $I$.

\subsection{From bipartite maps to Ising maps}
\label{sec:change}

\begin{prop}\label{prop:change}
  Define the change of variables $\ichange$ on the ring $\qs[ \nub, \nuw,s][[t]]$ by
  \beq\label{Psi}  
  \begin{split}  
    t\mapsto u^{1/3} \frac{z}{\denom} , 
    & \hskip 10mm \nub\mapsto zp_2,\\
    s\mapsto {u^{-2}},  \hskip 18mm
&\hskip 10mm \nuw\mapsto zq_2.
  \end{split}
  \eeq
  It gives a series of $\qs[  p_2, q_2,{u^{1/3}},1/u][[z]]$. Then the \gf\ $B$ of bipartite maps with vertex degrees $2$ and $3$, defined by~\eqref{B-def}, is
  \beq\label{BI}
    B=  \frac{u^2}{2}\log\left(\frac 1{1-z^2p_2q_2}\right) +u^2 \ichange(I),
  \eeq
  where $I$ is the Ising \gf\ defined by~\eqref{I-def}. Conversely,
  \beq\label{IPhiB}
  I= s \, \change(B) - \frac{1}{2}\log\frac1{1-\nub\nuw},
  \eeq
  where the inverse change of variables $\Phi$ is defined by
  \beq\label{Phi-def}
  \begin{split}
    z\mapsto s^{1/6} t (1-\nub\nuw), & \hskip 10mm 
    p_2\mapsto \frac{\nub}{s^{1/6} t (1-\nub\nuw)},\\
    u\mapsto s^{-1/2},\hskip 15mm  & \hskip 10mm  q_2\mapsto \frac{\nuw}{s^{1/6} t(1-\nub\nuw)}.
  \end{split}
  \eeq
The transformation $\Phi$ maps series in $\qs[ p_2, q_2,u][[z]]$ in which the sum of the exponents of $p_2$ and~$q_2$ never exceeds the exponent of $z$ to series of $\qs[s^{1/6}, s^{-1/6}][[t, \nub, \nuw]]$.
\end{prop}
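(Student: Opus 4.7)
The plan is to establish \eqref{BI} by a weight-preserving bijection between the maps counted by $B$ and a chain-decorated version of Ising cubic maps, then to derive \eqref{IPhiB} by inverting the change of variables.

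\textbf{Bijective decomposition.} I would split $\mathcal B$ according to the presence of degree-$3$ vertices. Maps with no degree-$3$ vertex are bipartite cycles, which (being cellular) embed only on the sphere with two faces; these will supply the term $B_0:=\frac{u^2}{2}\log\frac{1}{1-z^2p_2q_2}$. Maps with at least one degree-$3$ vertex are in bijection with pairs $(\m',c)$, where $\m'$ is an Ising cubic map and $c$ attaches to each edge of $\m'$ a bipartite chain of appropriate parity: a chain of odd length (with inserted degree-$2$ vertices alternating colors) on a monochromatic edge, and a chain of even length on a bicolored edge. The bijection is realized by contracting every maximal chain of degree-$2$ vertices in $\m$ to a single edge of $\m'$.

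\textbf{Weight matching.} Summing independently over the chain attached to each edge of $\m'$ replaces the edge's weight by
\[
\frac{z^2q_2}{1-z^2p_2q_2},\qquad \frac{z^2p_2}{1-z^2p_2q_2},\qquad \frac{z}{1-z^2p_2q_2}
\]
for a white-white, black-black, or bicolored edge respectively. These are precisely the images under $\ichange$ of $\nuw t/u^{1/3}$, $\nub t/u^{1/3}$, $t/u^{1/3}$. The missing factor $u^{1/3}$ per cubic edge, together with the prefactor $u^2$ in \eqref{BI}, reconstructs the face weight via Euler's formula: a genus-$g$ cubic map with $3m$ edges has $F=m+2-2g$, so that $u^F=u^2\cdot u^{-2g}\cdot u^m=u^2\,\ichange(s^g)\cdot(u^{1/3})^{3m}$. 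For the cycle part, I would verify by a direct automorphism count that a bipartite cycle of length $2k$ contributes $\frac{u^2}{2k}(z^2p_2q_2)^k$: its combinatorial automorphism group is dihedral of order $4k$ acting faithfully on (labeling, coloring) pairs for $k\ge 2$, and with the right kernel for $k=1$, giving exactly $(2k-1)!$ labeled colored cycles per length. Summing over $k\ge 1$ recovers $B_0$.

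\textbf{Labels.} The main technical obstacle is reconciling the bipartite edge labels (weight $1/E!$) with the cubic half-edge labels (weight $1/(2n)!$). I would handle this via permutation-triple encodings as in~\cite{goulden-jackson-KP}: an Ising cubic map is the data of a permutation $\alpha$ on $\{1,\ldots,2n\}$ with all cycles of length $3$, a fixed-point-free involution $\iota$, and a proper coloring, while a labeled map in $\mathcal B$ is given by a triple $(\sigma_\circ,\sigma_\bullet,\phi)$ with $\sigma_\circ\sigma_\bullet\phi=\id$. The chain-insertion operation lifts to a gluing of these encodings which is exponential-generating-function compatible by the species substitution theorem~\cite{BLL-book}; the $E!$ versus $(2n)!$ discrepancy is absorbed by the half-edges created along the inserted chains.

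\textbf{Inversion.} Finally, I would check by direct substitution that $\change\circ\ichange=\id$: for instance, $\change(\ichange(\nub))=\change(zp_2)=s^{1/6}t(1-\nub\nuw)\cdot\nub/[s^{1/6}t(1-\nub\nuw)]=\nub$, and similarly for $\nuw, t, s$. Since $\change(z^2p_2q_2)=\nub\nuw$ (so $\change(B_0)=\tfrac{s^{-1}}{2}\log\tfrac{1}{1-\nub\nuw}$) and $\change(u^2)=s^{-1}$, applying $\change$ to \eqref{BI} and multiplying by $s$ yields \eqref{IPhiB}. The ring-theoretic assertion follows from the monomial identity $\change(z^a p_2^b q_2^c u^d)=s^{(a-b-c-3d)/6}t^{a-b-c}\nub^b\nuw^c(1-\nub\nuw)^{a-b-c}$, whose $t$-valuation is nonnegative whenever $b+c\le a$ and whose $s$-exponent lies in $\tfrac{1}{6}\zs$, so that the image lands in $\qs[s^{1/6},s^{-1/6}][[t,\nub,\nuw]]$.
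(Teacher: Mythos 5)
Your overall decomposition is the same as the paper's: split $\mathcal B$ into bipartite cycles (giving the $\log$ term) and maps with a degree-$3$ vertex (obtained by inserting parity-constrained chains into the edges of an Ising cubic map), match the chain generating functions against the images of $\nuw t$, $\nub t$, $t$ under $\ichange$, reconstruct the face weight via Euler's formula, and invert $\ichange$ by direct substitution. Your chain weights, Euler computation, cycle count, and the inversion/ring-theoretic check are all correct (the cycle automorphism argument gives $(2k-1)!$, consistent with the paper's ``cut at label $1$'' count of $2(4k-1)!$ half-edge labelings).

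However, the ``Labels'' paragraph contains a genuine gap. The issue you correctly identify --- that $B$ is exponential in \emph{edges} while $I$ is exponential in \emph{half-edges} --- cannot be dismissed by invoking the species substitution theorem: species composition and product require the atoms on both sides to be of the same type, and here the cubic map carries half-edge labels while the chain decoration carries edge labels. The claim that ``the $E!$ versus $(2n)!$ discrepancy is absorbed by the half-edges created along the inserted chains'' does not explain the mechanism; for a chain inserted on a cubic edge $e$, the two cubic half-edges at the ends of $e$ also need labels for the species product to compute correctly, and these do not come from the chain. What is actually needed (and what the paper proves) is the preliminary observation that $B$ can be rewritten as a \emph{half-edge}-labeled EGF with each half-edge weighted by $\sqrt z$, because each edge-labeled bipartite map on $E$ edges corresponds to exactly $\binom{2E}{E}\,E! = (2E)!/E!$ half-edge labeled ones (choose which $E$ of the $2E$ labels land on the black side, place them in order, then distribute the remaining $E$ labels on the white side arbitrarily). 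Once $B$ is in half-edge-labeled form, the species product over half-edges applies cleanly, and the rest of your computation goes through; without this step the weight matching is not justified. A minor additional slip: in your permutation encoding of an Ising cubic map you require a ``proper coloring,'' but Ising colorings are not proper --- monochromatic edges are precisely what carries the weights $\nub,\nuw$.
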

Note that~\eqref{v-e} implies that the series $B\in \qs[ p_2, q_2,u][[z]]$ satisfies the above condition.
\begin{proof}

 Let us first observe that the series $B$ defined by~\eqref{B-def} is also the exponential \gf\ of bipartite maps (with vertex degrees $2$ and $3$ as before) \emm labeled on  half-edges,, with half-edges weighted by $\sqrt z$. That is,
    \[
      B= \sum_{\m} \frac{z^{\ee(\m)} }{\ee(\m)!} u^{\ff(\m)} p_2^{\vv_2^\circ(\m)}  q_2 ^{\vv_2^\bullet(\m)}
      =\sum_{\m'} \frac{\sqrt z^{\, \he(\m')} }{\he(\m')!} u^{\ff(\m')}  p_2^{\vv_2^\circ(\m')}  q_2 ^{\vv_2^\bullet(\m')},
    \]
    where the function $\he$ counts half-edges, the first sum runs over edge labeled maps $\m$, and the second over half-edge labeled maps $\m'$. The reason for that is that there exists a $(2n)!/n!$-to-1 correspondence between maps $\m'$ with $n$ edges labeled on half-edges and maps $\m$ on~$n$ edges labeled on edges. This correspondence works as follows: starting from $\m'$, we erase all labels that are incident to  white vertices, and  relabel the other half-edges with $1, 2, \ldots, n$, while preserving  their relative order. This gives $\m$. Conversely, starting from $\m$, we first choose in $\{ 1, \ldots, 2n\}$ the $n$ labels of $\m'$ that will be incident to black vertices, spread them on the corresponding half-edges (while preserving the order of labels of $\m$) and then distribute the remaining $n$ labels in any way on the half-edges that are incident to white vertices. We obtain in this way $\binom{2n}{n} n!= (2n)!/n!$ maps $\m'$. This observation will allow us to use the arguments  of the  theory of species~\cite{BLL-book} where now the atoms are half-edges (rather than edges in the proof of Proposition~\ref{prop:spec}).

  \begin{figure}[htb]
    \center
    \includegraphics[scale=0.8]{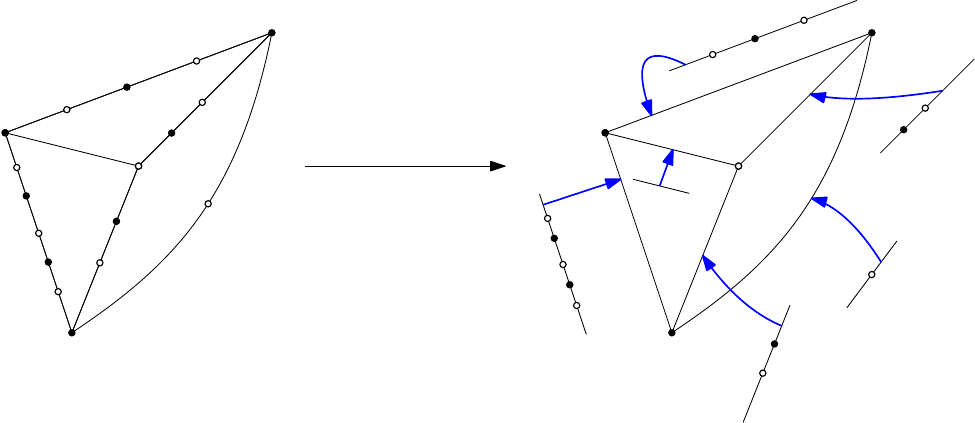}
    \caption{The decomposition of a bipartite map of $\mathcal B$ into a cubic map and  chains. We refer to Figure~\ref{fig:bip-ising} for details on the labels.}
    \label{fig_decompo}
  \end{figure}

 We now embark on the proof of~\eqref{BI}.  The maps $\m'$ (labeled on half-edges) enumerated by~$B$ are of two types.
  \begin{itemize}
  \item Either they only have vertices of degree $2$, in which case they are cycles of even length, say $2n$,  enumerated by
    \[
      2 u^2 \sum_{n\ge 1} \frac{z^{2n}}{(4n)!}(p_2q_2)^n (4n-1)!=
      \frac{u^2}{2}\log\left(\frac 1{1-z^2p_2q_2}\right).
    \]
    This can be seen  by cutting these cycles in the middle of the edge containing the label~$1$, which gives an ordered chain with two dangling half-edges; the factor $2$ accounts for the fact that the label $1$ can be attached to a black or to a white vertex.

  \item Or they have vertices of degree $3$, joined by chains of vertices of degree two  (Figure~\ref{fig_decompo}, left). To such a map $\m'$, we associate an Ising cubic map $\m$ as follows: we erase all vertices of degree $2$, and only retain the labels that are incident to cubic vertices (Figure~\ref{fig:bip-ising}).  If $\m$ (and $\m'$) have $2n$ cubic vertices, so that $\m$ has $6n$ half-edges, we then relabel these half-edges with labels $1, 2, \ldots, 6n$, while preserving their order. Some  chains with dangling half-edges come out, as illustrated in Figure~\ref{fig_decompo}. We orient them as indicated in Figure~\ref{fig:bip-ising}: a chain associated with a monochromatic edge of $\m$ with labels $i<j$ is oriented away from the dangling half-edge that is matched with $i$ in~$\m'$ (such chains contain an odd number of vertices); a non-empty chain with an even number of vertices is oriented from its black to its white endpoint.
  \end{itemize}

  \begin{figure}[htb]
    \centering
    \scalebox{0.8}{\input{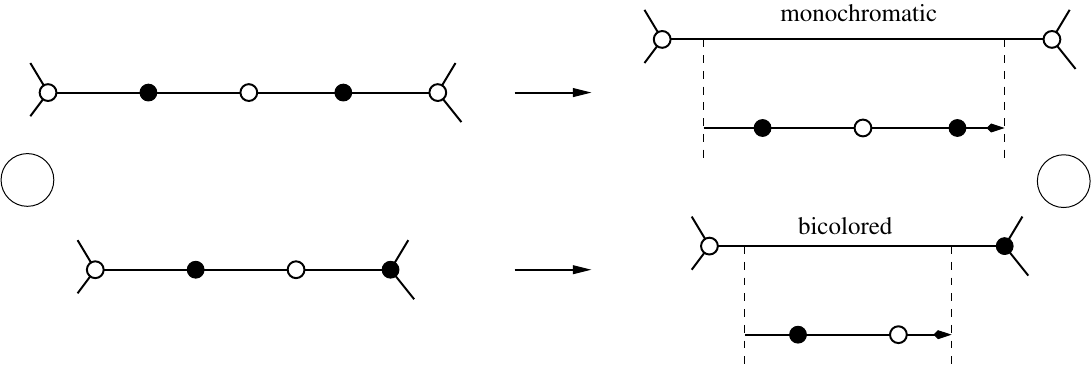_t}}
    \caption{Erasing vertices of degree $2$ in a bipartite map $\m'$ gives an Ising cubic map $\m$. Oriented bicolored chains come out.}
    \label{fig:bip-ising} 
  \end{figure}   

  Let us determine the \gf\ of bipartite maps $\m'$ that yield a given Ising cubic map $\m$ having $2n$ vertices. These maps are obtained from $\m$ by choosing for each edge~$e$ of $\m$  a bipartite oriented chain with dangling half-edges at both ends (Figure~\ref{fig:bip-ising}):
  \begin{itemize}
  \item For a monochromatic white edge $e$ with labels $i<j$, we choose  a chain of $2k+1$ vertices (and $4k+2$ half-edges), with black endpoints. These chains are counted by
    \[
      \sum_{k\geq 0} p_2^kq_2^{k+1} {\sqrt{z}}^{\, 4k+2}= \frac{z q_2}{\denom}.
    \]
   The starting point of the chain will be attached on the side of $e$ labeled $i$. 
  \item Analogously, for a monochromatic black edge $e$, we choose an oriented chain of $2k+1$ vertices, with white endpoints, with \gf $\frac{zp_2}{\denom}$; it will be inserted in $e$ in a canonical fashion again.
  \item Finally, for a bicolored edge $e$, we choose a (possibly empty) chain of $2k$ vertices, starting with a black vertex, with \gf\  $ \frac{1}{\denom}$. This chain will be inserted in $e$ in the only way that preserves bipartiteness.
  \end{itemize}
  Hence, the set of bipartite maps $\m'$ that give $\m$ after erasing vertices of degree $2$ is a labeled product, in the species setting, of $\m$ and of a collection of oriented chains, and has exponential \gf
  \[
    \frac{\sqrt z ^{\, 6 n}}{(6n)!} u^{\ff(\m)} \left(\frac{z q_2}{\denom}\right)^{\ee^\circ (\m)}
    \left(\frac{z p_2}{\denom}\right)^{\ee^\bullet (\m)} \frac{1}{(\denom)^{3n-\ee^\circ (\m)-\ee^\bullet (\m)}}.
  \]
  Recalling that $\m$ has $3n$ edges and $2n$ vertices, and that its genus $\gen(\m)$ satisfies $2\gen(\m)=2+n-\ff(\m)$, this can be rewritten as
  \[
    \frac{u^2}{(2\ee(\m))!} \left( \frac{z u^{1/3}}{\denom}\right)^{\ee(\m)}
    u^{-2\gen(\m)} (z q_2)^{\ee^\circ (\m)}(z p_2)^{\ee^\bullet (\m)}.
  \]
  Comparing with~\eqref{I-def} shows that bipartite maps of $\mathcal B$ that have at least one vertex of degree~$3$ have \gf\ $u^2 \Psi(I)$, where $\Psi$ is the change of variables~\eqref{Psi}.

  The rest of the proof is a mere calculation.
\end{proof}

\subsection{A PDE for Ising cubic maps}
\label{sec:EDIsing}

Let us return to the PDE~\eqref{eq_KP_bip}, which involves the series $B(z, p_2, q_2,u)$.
We want to apply to this identity the change of variables $\Phi$ defined by~\eqref{Phi-def}, and to write the resulting identity in terms of the series $I$ (that is, in terms of $\Phi(B)$) and its partial derivatives with respect to $t, \nub, \nuw$ and $s$. We find convenient to use the following notation: 
\[
  \partial _t =\frac{\partial }{\partial t} \qquad \text{and} \qquad    D_t= t\frac{\partial }{\partial t},
\]
  \[
    \partial _\circ =  \frac{\partial }{\partial \nuw}, \qquad \text{and} \qquad    D_\circ= \nuw\frac{\partial }{\partial \nuw},
  \]
  and likewise for $\nub$-derivatives.

\begin{lemma}\label{lem_change}
  Applying the change of variables $\Phi$ to the differential operators involved in~\eqref{eq_KP_bip} yields: 
  \begin{align*}
    \change\circ z\deriv{}{z}&=\left(\frac{1+\nub\nuw}{1-\nub\nuw}\, D_t+ D_\bullet+D_\circ\right) \circ \change, \\
      %
    \change\circ\deriv{}{p_2}&=s^{1/6} t\left(\nuw D_t+(\denomm)\partial_\bullet\right)\circ \change,\\
      %
    \change\circ \Opr&=s^{1/3}\OpRight  \circ \change,
  \end{align*}
  where $L$ is defined by~\eqref{op:def} and $\OpRight$ is the following linear operator:
  \beq\label{Lambda-def}
  \OpRight=2t^2\left((\nuw^2+\nub)D_t+ \nuw(1-\nub\nuw)\partial_\bullet+(1-\nub\nuw)\partial_\circ\right).
  \eeq
\end{lemma}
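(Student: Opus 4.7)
All three identities are straightforward chain-rule computations, which I plan to carry out uniformly as follows. Since $\change$ substitutes the new coordinates $(t,\nub,\nuw,s)$ for the old ones $(z,p_2,q_2,u)$ via~\eqref{Phi-def}, and the inverse map $\ichange$ expresses the new coordinates in terms of the old ones via~\eqref{Psi}, any first-order differential operator $D$ acting on functions of the old variables satisfies
\[
\change \circ D \;=\; \bigl(\change(Dt)\,\partial_t + \change(D\nub)\,\partial_\bullet + \change(D\nuw)\,\partial_\circ + \change(Ds)\,\partial_s\bigr)\circ \change,
\]
where $Dt, D\nub, D\nuw, Ds$ are obtained by letting $D$ act on the rational expressions of~\eqref{Psi}. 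None of the three operators at hand involves $u$, so $Ds=0$ and the $\partial_s$ term never contributes. The one simplification used throughout is that $z^2 p_2 q_2 = (zp_2)(zq_2)$ is mapped by $\ichange$ to $\nub\nuw$, whence $\change\bigl((1-z^2p_2q_2)^{-1}\bigr) = (1-\nub\nuw)^{-1}$ and $\change(z) = s^{1/6}t(1-\nub\nuw)$.

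For the first identity I apply $z\partial_z$ to the four expressions of~\eqref{Psi}, obtaining $z\partial_z t = t(1+z^2p_2q_2)/(1-z^2p_2q_2)$ together with $z\partial_z \nub = \nub$, $z\partial_z \nuw = \nuw$ and $z\partial_z s = 0$. Applying $\change$ and replacing $z^2p_2q_2$ by $\nub\nuw$ produces the announced formula.

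For the second identity, the same scheme applied to $\partial_{p_2}$ gives $\partial_{p_2} t = u^{1/3}z^3 q_2/(1-z^2 p_2 q_2)^2$ and $\partial_{p_2}\nub = z$, the other two images being zero. A direct substitution using~\eqref{Phi-def} then yields $\change(\partial_{p_2} t) = s^{1/6}t^2\nuw$ and $\change(\partial_{p_2}\nub) = s^{1/6}t(1-\nub\nuw)$, which factor as $s^{1/6}t$ times the claimed operator.

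For the third identity I combine the previous result with its $(p_2,\nub)\leftrightarrow(q_2,\nuw)$ symmetric counterpart for $\partial_{q_2}$, together with the prefactors $\change\bigl(z^2q_2/(1-z^2p_2q_2)\bigr) = s^{1/6}t\nuw$ and $\change\bigl(z/(1-z^2p_2q_2)\bigr) = s^{1/6}t$ obtained from the above. Using $D_t = t\partial_t$ to merge the two $\partial_t$-contributions, all terms assemble into
\[
\change\circ \Opr \;=\; 2 s^{1/3}t^2\bigl((\nuw^2+\nub)D_t + \nuw(1-\nub\nuw)\partial_\bullet + (1-\nub\nuw)\partial_\circ\bigr)\circ \change \;=\; s^{1/3}\OpRight\circ \change,
\]
as required. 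The main ``obstacle'' is organizational rather than conceptual: one must track the fractional powers of $s$ and the factors $1-\nub\nuw$ across several substitutions, which is kept manageable by the identity $\ichange(z^2p_2q_2)=\nub\nuw$ that collapses the denominators at every step.
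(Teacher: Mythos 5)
Your proposal is correct and follows essentially the same chain-rule approach as the paper: both compute $\change(D\psi_i)$ for each component $\psi_i$ of the change of variables (the paper packages these as the $\Phi$-image of the Jacobian matrix of $\psi$) and assemble the resulting first-order operator in the new coordinates. The explicit values you obtain for the partial derivatives and their $\change$-images match the entries of the paper's Jacobian computation, and the final assembly of $\Opr$ from the $\partial_{p_2}$ and $\partial_{q_2}$ pieces with the $\change$-images of the rational prefactors is the same.
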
 

\begin{proof}
  Let $A\equiv A(z, p_2, q_2, u)$ be a series in $\qs[p_2,q_2,u][[z]]$ such that in all monomials of $A$, the sum of the exponents of $p_2$ and $q_2$ never exceeds the exponent of $z$.  Let $J\equiv J(t,\nub,\nuw,s)=\Phi (A)$. This means conversely  that
  \beq\label{AJ}
    A(z, p_2, q_2, u)= \Psi (J(t,\nub,\nuw,s))= J\left(\bpsi(z, p_2, q_2, u)\right),
  \eeq
  where $\bpsi=\left(\bpsi_1, \ldots, \bpsi_4\right)$ is the following vectorial function (see~\eqref{Psi}):
  \[
    \bpsi: (z, p_2, q_2, u) \mapsto  \left( u^{1/3} \frac{z}{\denom} , 
      zp_2,  zq_2, u^{-2}\right).
  \]
  We differentiate~\eqref{AJ} with respect to $z$ using the chain rule, and then apply $\Phi$:
  \[
    \Phi \left(\frac{ \partial A}{\partial z} (z, p_2, q_2, u)\right)=
    \Phi\left(\frac{\partial \psi_1} {\partial z}\right)  \times \frac{\partial J}{\partial t} (t,\nub,\nuw,s) +\cdots +\Phi\left(\frac{\partial \psi_4}{\partial z}\right) \times \frac{\partial J}{\partial s} (t,\nub,\nuw,s),
  \]
with $J=\Phi(A)$.  So  we need  the image by $\Phi$ of the Jacobian matrix of $\psi$. We compute it to be:
  \[
    \Phi     \begin{pmatrix}
      \frac{\partial \psi_1}{\partial z} & \frac{\partial \psi_1}{\partial p_2}
      &\frac{\partial \psi_1}{\partial q_2} &\frac{\partial \psi_1}{\partial u} \\
      \frac{\partial \psi_2}{\partial z} & \frac{\partial \psi_2}{\partial p_2}
      &\frac{\partial \psi_2}{\partial q_2} &\frac{\partial \psi_2}{\partial u} \\  \frac{\partial \psi_3}{\partial z} & \frac{\partial \psi_3}{\partial p_2}
      &\frac{\partial \psi_3}{\partial q_2} &\frac{\partial \psi_3}{\partial u} \\ \frac{\partial \psi_4}{\partial z} & \frac{\partial \psi_4}{\partial p_2}
      &\frac{\partial \psi_4}{\partial q_2} &\frac{\partial \psi_4}{\partial u} \\        \end{pmatrix}
    =
    \begin{pmatrix}
      \frac{1+\nub \nuw}{s^{1/6}(1-\nub \nuw)^2}&  t^{2}s^{1/6}\nuw &  t^{2}s^{1/6}\nub & \cdots\\
      \frac{\nub}{ts^{1/6}(1-\nub\nuw)}& ts^{1/6}(1-\nub\nuw) & 0 & \cdots\\
      \frac{\nuw}{ts^{1/6}(1-\nub\nuw)}&0 & ts^{1/6}(1-\nub\nuw) &  \cdots\\
      0 &  0 &  0  &  \cdots 
    \end{pmatrix}
    .
  \]
  We ignore the last column  because we never differentiate with respect to $u$ (see Corollary~\ref{cor:KP23}).

  We claim that this yields the identities stated in the lemma. Let us examine for instance the first one:
  \begin{align*}
    \Phi \left(z \frac{\partial A}{\partial z}\right) &=
                                                        \Phi\left(z\frac{\partial \psi_1} {\partial z}\right)  \times \frac{\partial J}{\partial t}  +\Phi\left(z\frac{\partial \psi_2}{\partial z}\right) \times \frac{\partial J}{\partial \nub} +\Phi\left(z\frac{\partial \psi_3}{\partial z}\right) \times \frac{\partial J}{\partial \nuw},\\
                                                      &=  t  \frac{1+\nub \nuw}{1-\nub \nuw}\times  \frac{\partial J}{\partial t}  + \nub \times  \frac{\partial J}{\partial \nub } + \nuw \times  \frac{\partial J}{\partial \nuw },
  \end{align*}
  as stated in the lemma.

  The other identities are proved in a similar fashion.
\end{proof}

We can now write a PDE for the Ising \gf.

\begin{theorem}\label{thm:PDE-Ising}
  The \gf\ $I$  of Ising cubic maps, defined by~\eqref{I-def}, satisfies the following fourth order PDE in the variables $t, \nub$ and $\nuw$:
  \beq\label{eq_KP_Ising}
    \OpLeft I=   \frac s {12}\Lambda ^4I
    + \frac 1 2  (\Lambda^2 I)^2
    + t\left( \nuw+2t^3\left(2\nuw^4 + \nub\nuw^2 + 2\nub^2 + 3\nuw\right)\right)\Lambda^2 I
    +t^5 Q,
  \eeq
  where $\Lambda$ is defined by~\eqref{Lambda-def},
  \beq\label{OpLeft-def}
  \OpLeft:=  \frac 1 {3}\left( D_t+D_{\circ}-D_{\bullet}-1\right) \circ \Lambda
  -    \left( t \nuw D_t + t (1-\nub \nuw)\partial _{\bullet}\right)^2,
  \eeq
  and 
  \begin{multline*}
    Q=2\nuw \left(2  \nuw^{4}+  \nub  \nuw^{2}+2  \nub^{2}  +3  \nuw\right)
    +\left( \nuw^{5}+2  \nuw^{2}+ \nub\right) s 
    + 2\left(2\nuw^4 + \nub\nuw^2 + 2\nub^2 + 3\nuw\right)^2t^3
    \\
    +2\left(16  \nuw^{8}+5  \nuw^{6}  \nub+10  \nub^{2}  \nuw^{4}+16  \nub^{3}  \nuw^{2}+59  \nuw^{5}+16  \nub^{4}+54  \nub  \nuw^{3}+37  \nub^{2}  \nuw+32  \nuw^{2}+11  \nub\right) t^3s.
  \end{multline*}
\end{theorem}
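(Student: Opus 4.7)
The plan is to apply the change of variables $\Phi$ of~\eqref{Phi-def} to the bipartite PDE~\eqref{eq_KP_bip} and then use the identity~\eqref{IPhiB}, which reads $s\,\Phi(B) = I + \mathcal{L}$ with $\mathcal{L} := \tfrac{1}{2}\log\frac{1}{1-\nub\nuw}$, to convert everything into an equation for $I$.

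First I would translate each term of~\eqref{eq_KP_bip} using Lemma~\ref{lem_change}. The identity $\Phi\circ\Opr = s^{1/3}\OpRight\circ\Phi$ gives $\Phi(\Opr^k B) = s^{k/3}\OpRight^k\Phi(B)$ for $k=1,2,4$. Iterating the formula for $\Phi\circ\partial_{p_2}$ yields $\Phi(\partial_{p_2}^2 B) = s^{1/3}(t\nuw D_t + t(1-\nub\nuw)\partial_\bullet)^2\Phi(B)$, which will furnish the second piece of $\OpLeft$. A direct calculation from the identities of Lemma~\ref{lem_change} shows $\Phi\circ(z\partial_z - 2p_2\partial_{p_2} - 1) = (D_t + D_\circ - D_\bullet - 1)\circ\Phi$; combining with the previous identities, the LHS of~\eqref{eq_KP_bip} becomes $\tfrac{s^{1/3}}{3}(D_t + D_\circ - D_\bullet - 1)\OpRight\Phi(B)$. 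Finally, plugging~\eqref{Phi-def} into the rational coefficients and using the key simplification $\Phi(1-z^2p_2q_2) = 1-\nub\nuw$, one obtains $\Phi(uz^2q_2/(\denom)) = s^{-1/3}t\nuw$ and $\Phi(R) = \tfrac{1}{2}s^{-2/3}t^2\nuw^2 + s^{1/3}t^5(\nuw^5 + 2\nuw^2 + \nub)$.

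Next I would divide the transformed equation by $s^{1/3}$, substitute $\Phi(B) = (I+\mathcal{L})/s$, multiply through by $s$, and expand. Since $\OpLeft$ and $\OpRight$ involve no $s$-derivative, they distribute linearly over $I+\mathcal{L}$, and the fractional $s$-powers disappear. Short elementary calculations, relying on the fact that $\mathcal{L}$ depends only on $\nub\nuw$, yield $\OpRight\mathcal{L} = t^2(\nuw^2+\nub)$, $\OpRight^2\mathcal{L} = 2t^4(2\nuw^4 + \nub\nuw^2 + 2\nub^2 + 3\nuw)$, and $\OpLeft\mathcal{L} = \tfrac{1}{2}t^2\nuw^2$. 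The last identity cancels exactly the $\tfrac{1}{2}t^2\nuw^2$ piece of $\Phi(R)$; the mixed term $\OpRight^2 I\cdot\OpRight^2\mathcal{L}$ from the expansion of the quadratic combines with the coefficient $t\nuw$ of $\OpRight^2 I$ (coming from $\Phi$ of $\tfrac{uz^2q_2}{\denom}\Opr^2B$) to produce exactly the bracket $t(\nuw + 2t^3(2\nuw^4 + \nub\nuw^2 + 2\nub^2 + 3\nuw))$ of~\eqref{eq_KP_Ising}. The residual inhomogeneous terms --- namely $\tfrac{s}{12}\OpRight^4\mathcal{L}$, $\tfrac{1}{2}(\OpRight^2\mathcal{L})^2$, $t\nuw\,\OpRight^2\mathcal{L}$ and $st^5(\nuw^5+2\nuw^2+\nub)$ --- once divided by $t^5$, assemble into the four summands of $Q$.

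The main obstacle is the bookkeeping of fractional powers of $s$ and of the logarithmic correction $\mathcal{L}$. The former must cancel because $I\in\qs[\nub,\nuw,s][[t]]$ has only integer powers of $s$; this is why one divides by $s^{1/3}$ (rather than by $s$) at the intermediate step, and the fact that the genus-one part of $\Phi(R)$ is exactly $s^{1/3}t^5(\cdots)$ is consistent with this. The latter produces the counterterms matching $t^5 Q$; the cancellation of the two $\tfrac{1}{2}t^2\nuw^2$ contributions, and the recurrence of the polynomial $2\nuw^4 + \nub\nuw^2 + 2\nub^2 + 3\nuw$ both in the coefficient of $\OpRight^2 I$ and inside $Q$, serve as useful consistency checks. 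The only genuinely laborious computation is the expansion of $\OpRight^4\mathcal{L}$, which --- after simplification --- produces the degree-eight polynomial appearing in the last summand of $Q$.
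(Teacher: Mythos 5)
Your proposal is correct and follows the same overall strategy as the paper's proof: apply the change of variables $\Phi$ to~\eqref{eq_KP_bip} via Lemma~\ref{lem_change}, trade $\Phi(B)$ for $I$ via~\eqref{IPhiB}, and collect terms. The only organizational difference is that you keep the logarithmic correction $\mathcal{L}$ as a named quantity and compute $\OpRight\mathcal{L}$, $\OpRight^2\mathcal{L}$, $\OpRight^4\mathcal{L}$ and $\OpLeft\mathcal{L}$ once and for all, whereas the paper directly writes out the $\Phi$-image of each term of~\eqref{eq_KP_bip} (including the contribution of $\mathcal{L}$ absorbed into each) and then multiplies by $s^{2/3}$; the identities you state ($\Phi(1-z^2p_2q_2)=1-\nub\nuw$, $\Phi(uz^2q_2/(1-z^2p_2q_2))=s^{-1/3}t\nuw$, $\OpRight\mathcal{L}=t^2(\nuw^2+\nub)$, $\OpRight^2\mathcal{L}=2t^4(2\nuw^4+\nub\nuw^2+2\nub^2+3\nuw)$, $\OpLeft\mathcal{L}=\tfrac12 t^2\nuw^2$, $\Phi\circ(z\partial_z-2p_2\partial_{p_2}-1)=(D_t+D_\circ-D_\bullet-1)\circ\Phi$) are all correct, and the cancellation of the $\tfrac12 t^2\nuw^2$ terms and the assembly of $Q$ from the four residual pieces come out exactly as you describe.
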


We will see in the next section  that this PDE, combined with a degree condition and the fact that $I$ is symmetric in $\nub$ and $\nuw$,  characterizes $I$ in $t\qs[\nub,\nuw,s][[t]]$. Clearly, the PDE itself is not symmetric in $\nub$ and $\nuw$. So far, our efforts to build another PDE that would be both symmetric and smaller have failed.

\begin{proof}
  We apply the change of variables $\Phi$, defined by~\eqref{Phi-def}, to Equation~\eqref{eq_KP_bip}, using the identities of Lemma~\ref{lem_change} and the connection~\eqref{IPhiB} between $\Phi(B)$ and $I$. 

  The left-hand side of~\eqref{eq_KP_bip} gives
  \[
    \frac 1 {3s^{2/3}}\left( D_t+D_{\circ}-D_{\bullet}-1\right) \circ \Lambda I + \frac 1{s^{2/3}} t^2 \nuw^2.
  \]
  The first term on the right-hand side gives
  \beq\label{first}
    \frac 1 {s^{2/3}} \left( t \nuw D_t + t (1-\nub \nuw)\partial _{\bullet}\right)^2 I+
    \frac 1 {2s^{2/3}} t^2  \nuw^2.
  \eeq
  The second one gives
  \begin{multline*}
    \frac {s^{1/3}} {12}\Lambda ^4 I +\\
    2 s^{{1/3}} t^{8} \left(16 \nuw^{8}+5 \nub \,\nuw^{6}+10 \nub^{2} \nuw^{4}+16 \nub^{3} \nuw^{2}+59 \nuw^{5}+16 \nub^{4}+54 \nub \,\nuw^{3}+37 \nub^{2} \nuw +32 \nuw^{2}+11 \nub \right).
  \end{multline*}
  The third one gives
  \[
    \frac 1 {2 s^{2/3}} (\Lambda^2 I)^2
    + \frac 2{s^{2/3}}t^4\left(2\nuw^4 + \nub\nuw^2 + 2\nub^2 + 3\nuw\right)\Lambda^2 I
    + \frac 2{ s^{2/3}}t^8\left(2\nuw^4 + \nub\nuw^2 + 2\nub^2 + 3\nuw\right)^2.
  \]
  The fourth one  gives
  \[
    \frac{  \nuw }{s^{2/3}}t \Lambda^2 I
    + \frac 2 {s^{2/3}} t^5\nuw\left(2\nuw^4 + \nub\nuw^2 + 2\nub^2 + 3\nuw\right).
  \]
  Finally, the fifth and last one  gives
  \[
    \frac 1 {2s^{2/3}} t^2 \left(2\nuw^5st^3 + 4\nuw^2st^3 + 2\nub st^3 + \nuw^2\right).
  \]
  It remains to multiply by $s^{2/3}$ and group all terms not involving $I$ to obtain the announced PDE. Note that the first term of~\eqref{first}
  has been moved to the left-hand side, for reasons that will be explained later.
\end{proof}

\section{Uniqueness and effective calculation of the Ising series}
\label{sec:unique}
\subsection{Uniqueness}

The first objective of this section is to establish the following result, according to which the PDE that we have obtained for the series $I$, combined with two natural conditions, characterizes this series.

\begin{prop}\label{prop:unique}
  The PDE~\eqref{eq_KP_Ising} satisfied by the Ising series $I$ of cubic maps, defined by~\eqref{I-def}, characterizes $I\equiv I(t,\nub,\nuw,s)$ in the ring of series $J(t,\nub,\nuw,s)$ of $t\qs[\nub,\nuw,s][[t]]$ satisfying the following two conditions:
  \begin{itemize}
  \item for each $n$, the total degree in $\nub$ and $\nuw$ of the coefficient of $t^n$ is bounded by $n$,
  \item $J(t, \nub, 0,s)=J(t,0, \nub,s)$.
  \end{itemize}
  More precisely, the PDE determines the coefficient of $t^n$ in the series $I$  inductively in $n$.
\end{prop}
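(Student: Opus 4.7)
The plan is to argue by strong induction on $n$ that any series $J$ satisfying \eqref{eq_KP_Ising} and the two conditions agrees with $I$ up to order $n$.

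\emph{Extracting a linear equation for $J_n$.} First I would observe that both $\Lambda$ and $\OpLeft$ raise the $t$-degree by exactly $2$: for $\Lambda$ this is due to its prefactor $t^2$ together with the fact that $D_t, \partial_\bullet, \partial_\circ$ all preserve $t$-degree, and $\OpLeft$ inherits this shift from $\Lambda$ in its first summand and from $(t\,\cdot)^2$ in its second. On the right-hand side of \eqref{eq_KP_Ising}, every term raises the $t$-degree by at least $5$ ($\Lambda^4$ by $8$, $(\Lambda^2 J)^2$ by at least $8$, $t(\nuw + 2t^3(\ldots))\Lambda^2 J$ by at least $5$, and $t^5 Q$ begins in $t^5$). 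Extracting the coefficient of $t^{n+2}$ therefore yields an identity $\mathcal{D}_n J_n = F_n$, where $\mathcal{D}_n$ is an explicit linear differential operator in $(\nub, \nuw)$ with polynomial coefficients in $(\nub,\nuw,s)$ depending on $n$, and $F_n$ is known by induction since it depends only on the $J_k$ for $k \le n-3$. Applying the same extraction to $I$ and subtracting, the difference $P := J_n - I_n$ satisfies $\mathcal{D}_n P = 0$ and lies in the admissible space $V_n := \{P \in \qs[s][\nub,\nuw]\colon \deg_{\nub,\nuw} P \le n,\ P(\nub, 0) = P(0, \nub)\}$.

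\emph{Injectivity of $\mathcal{D}_n$ on $V_n$.} I would then write $P = \sum_{i+j\le n}c_{i,j}\nub^i \nuw^j$ and compute $[\nub^A \nuw^B]\mathcal{D}_n P$ explicitly: it is a five-term linear combination of $c_{A-1,B}$, $c_{A,B-2}$, $c_{A+1,B-1}$, $c_{A,B+1}$, $c_{A+2,B}$ with explicit coefficients depending on $(n,A,B)$. My strategy is to process the unknowns $c_{i,j}$ in decreasing total degree $d = i+j$, from $d = n$ down to $d = 0$. At each stage the extractions at level $k := A + B = d + 2$ (with $A = 0, \ldots, d$) are used to solve for $c_{A, d-A}$; its coefficient equals $\tfrac{(n-A)(2d+3-n-A)}{3}$, while all other five-term neighbours have total degree $>d$ and are already known to vanish by induction. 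This coefficient is nonzero except in two degenerate situations: $A = n$ (a corner, only possible at $d = n$) and $A = 2d+3-n$ (a ``gap'', occurring for $\lceil (n-3)/2 \rceil \le d \le n-3$). The corner $c_{n,0}$ is recovered from the partial symmetry $c_{n,0} = c_{0,n}$. A gap whose missing coefficient lies on the boundary ($d = n-3$, giving $c_{n-3, 0}$) is recovered likewise. Each \emph{interior} gap $c_{2d+3-n,\, n-d-3}$ with $n-d-3 \ge 1$ has to be recovered from an auxiliary extraction at a different $(A',B')$, exploiting that each $c_{i,j}$ appears in up to five different extractions so that a suitable choice makes all co-unknowns have total degree strictly greater than $d$.

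\emph{The main obstacle.} The delicate part of the argument is the interior-gap case: for each interior gap coefficient one must exhibit an auxiliary extraction whose target appears with nonzero weight \emph{and} whose other four neighbours are all of total degree $>d$ (hence known), avoiding circular dependencies among the various gap coefficients. This requires a careful enumeration of the five positions in which a given $c_{i,j}$ occurs and a systematic ordering of the extractions (possibly interleaving different levels of $k$). Once this bookkeeping is carried out, all $c_{i,j}$ are forced to vanish, giving $P = 0$ and therefore $J_n = I_n$, which completes the induction.
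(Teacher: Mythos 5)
Your proposal follows the same overall architecture as the paper: extract the coefficient of $t^{n+2}$ from~\eqref{eq_KP_Ising}, observe that the right-hand side depends only on $J_0,\dots,J_{n-3}$, and reduce the uniqueness claim to showing that the linear operator $\Omega_n$ has trivial kernel on polynomials of total degree $\le n$ satisfying the partial symmetry. The five-term relation you write down among the $c_{i,j}$'s is exactly the one the paper uses, and the corner $c_{n,0}$ is handled by symmetry in both arguments.

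However, your proof of kernel triviality has a genuine gap, which you flag but do not resolve: the ``interior gap'' coefficients $c_{2d+3-n,\,n-d-3}$ for $\lceil (n-3)/2\rceil \le d < n-3$. Moreover, the fix you propose --- picking an auxiliary extraction in which $c_{i,j}$ appears with nonzero weight and whose other four neighbours are all of degree $>d$ --- cannot work within your decreasing-total-degree ordering. Indeed, $c_{i,j}$ with $i+j=d$ appears in exactly five extractions, at levels $k:=A+B\in\{d-2,d-1,d,d+1,d+2\}$, and the lowest-degree coefficient in the extraction at level $k$ has degree $k-2$. So the only extraction in which $c_{i,j}$ is the minimum-degree term is the one at $k=d+2$ --- precisely the one whose coefficient of $c_{i,j}$ vanishes at the gap. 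Every other extraction containing $c_{i,j}$ necessarily involves some $c_{i',j'}$ with $i'+j'<d$, which is not yet known in a high-to-low degree induction. No ``interleaving'' of levels can fix this without abandoning the degree ordering. (There is also a boundary issue: for $n=3$ the gap at $d=0$ is $c_{0,0}$, which is symmetric under $\nub\leftrightarrow\nuw$, so symmetry yields no information there.)

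The paper avoids the problem with a different ordering. Its Lemma~\ref{lem:kernel} extracts the coefficient of $\nub^{i+1}\nuw^{j}$ (the ``right-to-left'' recursion~\eqref{right-left}), in which $p_{i,j}$ appears with coefficient $2(n-j)(n-i+j)$ and the other four neighbours all have strictly larger first index $i'\in\{i+1,i+2,i+3\}$. Since for $i<n$ and $j<n$ this coefficient is nonzero, a decreasing induction on $i$ (not on total degree) determines all $p_{i,j}$ once $p_{n,0}$ is known. The sole exceptional coefficient $p_{0,n}$ is obtained directly from a second extraction, at $\nub^{0}\nuw^{n+2}$ (the ``top-down'' recursion~\eqref{top-down}), which has nonzero leading coefficient there; $p_{n,0}$ then follows by symmetry. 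Switching to this ordering is exactly what you need to close the gap in your argument.
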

Observe that we do not need to require that $J$ is a series in $t^3$. The above two conditions are obviously satisfied by $I$, because in the contribution of any Ising cubic map having $n$ edges, the total degree in $\nub$ and $\nuw$ is the number of monochromatic edges, hence bounded by $n$. The symmetry is obvious as well, and more generally $I(t, \nub, \nuw,s)=I(t,\nuw, \nub,s)$.

Before we embark on the proof, let us examine more closely both sides of  our PDE. 
Let $J=\sum_{n\ge 1} t^n J_n$ be a series satisfying the conditions of the proposition, where $J_n$ is a polynomial in $\nub, \nuw$ and $s$.
Then $\Omega (t^n J_n)$ is of the form $t^{n+2} \Omega_n(J_n)$, where $\Omega_n(J_n)$ is independent of~$t$. More precisely, $\OpLeft_n$ is the following linear differential operator:
\begin{multline*}
  \OpLeft_n:=   \frac 2 {3}\left( n+1+D_{\circ}-D_{\bullet}\right) \circ
  \left( (\nuw^2+\nub)n+ \nuw \den \partial _{\bullet} + \den \partial_{\circ}\right)
  \\
  -    \left(  \nuw (n+1) +  (1-\nub \nuw)\partial _{\bullet}\right)\circ\left(  \nuw n+  (1-\nub \nuw)\partial _{\bullet}\right).
\end{multline*}

Moreover, if we replace $I$ by $J$ in the right-hand side of the PDE~\eqref{eq_KP_Ising}, and extract  the coefficient of $t^{n+2}$, then this coefficient only depends of the polynomials $J_k$ up to $k=n-3$ (we moved the first term of~\eqref{first}  to the left-hand side of the PDE to guarantee this property).
Hence, if we know the polynomials $J_0=0, J_1, \ldots, J_{n-3}$, we can try to determine $J_n$ by solving an equation of the form $\OpLeft_n(J_n)= \Pol$, for some explicit polynomial $\Pol$ in $\nub, \nuw$ and~$s$. Since we know that $J=I$ is a solution, it suffices to study the kernel of $\OpLeft_n$.

\begin{lemma}\label{lem:kernel}
  For $n\ge 1$,  the linear operator $\OpLeft_n$, restricted to polynomials $P$ in $\nub$ and $\nuw$ of total degree at most $n$ that satisfy $P(\nub,0)=P(0,\nub)$, has trivial kernel.
\end{lemma}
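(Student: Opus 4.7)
The plan is to write any kernel element as $P=\sum_{i+j\le n}a_{i,j}\nub^i\nuw^j$ with $a_{i,0}=a_{0,i}$, and to exploit the very explicit action of $\OpLeft_n$ on monomials. Expanding the two composite operators in the definition of $\OpLeft_n$, a routine calculation gives
\begin{multline*}
\OpLeft_n(\nub^i\nuw^j)=-\tfrac{(n-i)(n-i-2j-3)}{3}\,\nub^i\nuw^{j+2}+\tfrac{2(n-i+j)(n-j)}{3}\,\nub^{i+1}\nuw^j\\+\tfrac{2i(2i+j-2n)}{3}\,\nub^{i-1}\nuw^{j+1}+\tfrac{2j(n-i+j)}{3}\,\nub^i\nuw^{j-1}-i(i-1)\,\nub^{i-2}\nuw^j.
\end{multline*}
Since the five target monomials on the right are pairwise distinct, extracting the coefficient of any fixed monomial in $\OpLeft_n(P)$ produces a finite linear combination of at most five $a_{i',j'}$.

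The argument then kills the coefficients of $P$ in two waves. First, reading off the coefficient of $\nuw^{n+2}$ from $\OpLeft_n(P)=0$: among the five potential input positions, only $(i',j')=(0,n)$ respects the degree bound $i'+j'\le n$, so this coefficient equals $\tfrac{n(n+3)}{3}a_{0,n}$. For $n\ge 1$ this forces $a_{0,n}=0$, and then $a_{n,0}=0$ by the symmetry hypothesis.

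Assuming now $P\neq 0$, set $I:=\max\{i:a_{i,j}\neq 0\text{ for some }j\}$ and, for each $j\in\{0,\ldots,n-I\}$, examine the coefficient of $\nub^{I+1}\nuw^j$ in $\OpLeft_n(P)$. The five candidate inputs are $(I,j)$, $(I+1,j-2)$, $(I+2,j-1)$, $(I+1,j+1)$ and $(I+3,j)$; the four exceptional ones all have $i'>I$ and hence vanish by maximality, the only conceivable escape being $(i',j')=(n,0)$ — which arises precisely for $(I,j)\in\{(n-1,2),(n-2,1),(n-3,0)\}$, and has just been killed in the first wave. The coefficient therefore reduces to $\tfrac{2(n-I+j)(n-j)}{3}a_{I,j}$, which must vanish. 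For $I\in\{1,\ldots,n-1\}$, both factors $n-I+j$ and $n-j$ are strictly positive throughout the admissible range of $j$, so $a_{I,j}=0$ for all $j$, contradicting the choice of $I$. Since $I=n$ is ruled out by $a_{n,0}=0$, we must have $I=0$, and the symmetry then collapses $P$ to a constant $a_{0,0}$. A direct computation yields $\OpLeft_n(1)=-\tfrac{n(n-3)}{3}\nuw^2+\tfrac{2n^2}{3}\nub$, whose $\nub$-coefficient is nonzero for $n\ge 1$, so $a_{0,0}=0$ and $P=0$. The only delicate point is the case analysis in the second wave, where one must check that the four "spurious" inputs cannot secretly contribute through $a_{n,0}$; this is precisely why the preliminary killing of $a_{n,0}$ in the first wave is indispensable.
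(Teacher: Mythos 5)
Your proof is correct and follows essentially the same strategy as the paper's. Both arguments expand $\OpLeft_n(\nub^i\nuw^j)$ as a five-monomial expression (your formula agrees with the paper's after dividing by $3$), both first kill $a_{0,n}$ by extracting the coefficient of $\nuw^{n+2}$ (the paper's ``top-down'' recursion at $(i,j)=(0,n)$ is exactly that extraction) and then kill $a_{n,0}$ by symmetry, and both then propagate vanishing from high to low powers of $\nub$ via the coefficient of $\nub^{i+1}\nuw^{j}$ (the paper's ``right-to-left'' recursion); your maximal-index $I$ combined with a contradiction is just the contrapositive form of the paper's decreasing induction on $i$. Two small remarks: the parenthetical about $(n,0)$ being a ``conceivable escape'' is superfluous, since once $I$ is defined as the maximum, $a_{i',j'}=0$ for all $i'>I$ holds by definition, and wave~1 is used only to rule out $I=n$; and your special handling of $I=0$ via the symmetry and $\OpLeft_n(1)\ne 0$ is unnecessary, because the same coefficient extraction kills $a_{0,j}$ for $j<n$ (the factor $n-j$ vanishes only at $j=n$, already handled in wave~1) — this is in fact how the paper treats $i=0$.
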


\begin{proof}
  If $\OpLeft_n(P)=0$, with
  \[
    P(\nub,\nuw)=\sum_{i+j\le n} p_{i,j} \nub^i \nuw^j,
  \]
  then
  \begin{multline*}
    3  \OpLeft_n (P) =
    \sum_{i+j\le n} p_{i,j} \nub^i \nuw^j
    \left( {2 \left(j -n \right) \left(i -j -n \right) \nub} 
      -{\left(i -n \right) \left(i +2 j -n +3\right)  \nuw^{2}}\right. \\
    \left.     -{2 j \left(i -j -n \right)  \nuw^{ -1}}
      +{2 i \left(2 i +j -2 n \right) \nub^{ -1} \nuw}
      -3i \left(i -1\right) \nub^{ -2}
    \right)=0.
  \end{multline*}
  Extracting the coefficient of $\nub^{i+1}\nuw^j$ gives a relation between the coefficients of $P$ that we may try to use to compute the $p_{i,j}$'s
  by decreasing induction on $i$ (the \emm right-to-left, recursion):
  \begin{multline}
    {2 \left(n-j \right) \left(n-i +j  \right) p_{i ,j}}=
    {\left(i +1-n \right) \left(i +2 j -n \right) p_{i +1,j -2}}
    +{2 \left(j +1\right) \left(i -j -n \right) p_{i +1,j +1}}\\
    -{2 \left(i +2\right) \left(2 i +3+j -2 n \right) p_{i +2,j -1}}
    +3\left(i +3\right) \left(i +2\right) p_{i +3,j}. \label{right-left}
  \end{multline}
  However, the left-hand side vanishes at the two extreme points  $(i,j)=(0,n)$ and $(i,j)=(n,0)$. Alternatively,
   replacing in the above recursion $i$ by $i-1$ and $j$ by $j+2$ gives the following \emm top-down, recursion:
  \begin{multline}
    {\left(i -n \right) \left(i +2 j -n +3\right) p_{i ,j}} =
    {2 \left(j +2-n \right) \left(i -3-j -n \right) p_{i -1,j +2}}
    -{2 \left(j +3\right) \left(i -3-j -n \right) p_{i ,j +3}}\\
    +{2 \left(i +1\right) \left(2 i +3+j -2 n \right) p_{i +1,j +1}}
    -3\left(i +2\right) \left(i +1\right) p_{i +2,j +2}.\label{top-down}
  \end{multline}
  Of course, we have the boundary conditions $p_{i,j}=0$ if $i<0$ or $j<0$ or $i+j>n$.

  We first write the top-down recursion~\eqref{top-down} at $i=0,j=n$, and obtain $p_{0,n}=0$. By the symmetry assumption, we also have $p_{n,0}=0$. Now for $i<n$ and $j<n$, the coefficient of $p_{i,j}$ in the right-to-left recursion~\eqref{right-left} does not vanish, and allows us to conclude, by decreasing induction on $i$, that $p_{i,j}=0$ for all $i$, so that the polynomial $P$ is identically zero.
\end{proof}

\begin{remark}\label{rem:compute}
  The procedure used in the above proof also allows us to solve
  the non-homo\-geneous equation $\OpLeft_n(J_n)=\Pol$ where $\Pol$ is the polynomial $\Omega_n(I_n)$, under the assumptions that  $J_n(\nub,0) =J_n(0,\nub)$ and that $J_n$ has total degree at most $n$ in $\nub$ and $\nuw$: we first determine the coefficient of $\nub^0\nuw^n$ using the top-down recursion, then use symmetry to determine the coefficient of $\nub^n \nuw^0$, and then work by decreasing induction on the exponent of $\nub$ using the right-to-left recursion.
\end{remark}

\begin{remark}\label{rem:non-injective}
  If we do not impose  the symmetry condition, the kernel of $\Omega_n$ appears to be trivial when  $n\ge1$  is even, but one-dimensional when $n$ is odd. For instance, one readily checks that
  \[
    \Omega_1(\nub)=0, \qquad \Omega_3(1+\nub ^3)=0, \qquad \Omega_5(\nub^5 + 2\nub^2 + \nuw)=0.
  \]
\end{remark}

\begin{proof}[Proof of Proposition~\ref{prop:unique}]
  Let us consider a series $J$ satisfying the assumptions of the proposition. For $n\ge 0$,  let $J_n$ (resp. $I_n$) denote the coefficient of $t^n$ in $J$ (resp. $I$). Let us prove by induction on $n$ that $J_n=I_n$. By assumption on $J$, this holds for $n=0$. Assume that it holds for $J_0, J_1, \ldots, J_{n-1}$, with $n\ge 1$. Extract from the PDE~\eqref{eq_KP_Ising} the coefficient of $t^{n+2}$. As observed above  Lemma~\ref{lem:kernel}, this gives the equation $\OpLeft_n J_n= \Pol$, where $\Pol$ only involves $\nub, \nuw, s$ and the polynomials $J_1, \ldots, J_{n-3}$ (and their partial derivatives). Since $I$ also satisfies the PDE, and $I_i=J_i$ for $i<n$, we also have  $\OpLeft_n I_n= \Pol$, \emm for the same value of $\Pol$,. We conclude that $J_n=I_n$ thanks to Lemma~\ref{lem:kernel}.
\end{proof}

\subsection{Implementation}
\label{sec:implement}
We have implemented the above recursive calculation of the coefficient $I_n$ of $t^n$ in the series $I$ both in {\sc Maple} and in {\sc SageMath}.
The programs are available on our webpages. We take advantage of the following three properties of $I$:
\begin{itemize}
\item $I_n=0$ unless $n$ is a multiple of $3$,
\item if we write $I_n= \sum_{i,j} I_{n,i,j} \nub^i \nuw^j $,  for $I_{n,i,j}$ a polynomial in $s$, then $I_{n,i,j}=0$ unless $i-j$ is a multiple of $3$, as observed at the beginning of Section~\ref{sec:Ising},
\item  $I_n$ is symmetric in $\nub$ and $\nuw$.
\end{itemize}
With a naive implementation in \Maple\ on a laptop, one can go up to $72$ edges in $40$ seconds, or $120$ edges (maximal genus $20$)  in $15$ minutes. The {\sc SageMath}  implementation is  so far slower due to a  less efficient handling of multivariate polynomials.

\section{Three special cases}
\label{sec:special cases}

The form of the differential operators involved in the PDE satisfied by the Ising series~$I$ allows us to extract at once equations satisfied by three subseries of $I$: those counting planar maps,   monochromatic (white) maps, and unicellular maps. In the second case, we recover, unsurprisingly, the Goulden and Jackson recurrence relation on the number of cubic maps with $3n$ edges and genus $g$.

\subsection{The planar case}
\label{sec:planar}
This is the simplest possible specialization of the three. Let $P$ be the Ising \gf\ of cubic planar maps, defined as in~\eqref{I-def} but by restricting the sum to planar maps. It is obtained by setting $s=0$ in $I$.

\begin{cor}\label{cor:planar}
  The Ising \gf\ $P$ of cubic planar maps (labeled on half-edges) satisfies the following second order PDE in the variables $t, \nub$ and $\nuw$:
  \[
    \OpLeft P=    \frac 1 2  (\Lambda^2 P)^2
    + t\left( \nuw+2t^3\left(2\nuw^4 + \nub\nuw^2 + 2\nub^2 + 3\nuw\right)\right)\Lambda^2 P
    +t^5 Q_0,
  \]
  where $\Lambda$ is defined by~\eqref{Lambda-def}, $\OpLeft$ by~\eqref{OpLeft-def}, 
  and 
  \[
    Q_0=2\nuw \left(2  \nuw^{4}+  \nub  \nuw^{2}+2  \nub^{2}  +3  \nuw\right)
    + 2\left(2\nuw^4 + \nub\nuw^2 + 2\nub^2 + 3\nuw\right)^2t^3.
  \]
\end{cor}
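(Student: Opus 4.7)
The plan is to obtain Corollary~\ref{cor:planar} as a direct specialization of Theorem~\ref{thm:PDE-Ising} at $s=0$. The starting point is the observation that the variable $s$ records the genus in the series $I$, so the planar \gf\ is obtained by
\[
  P(t,\nub,\nuw) = I(t,\nub,\nuw,0).
\]
Thus it suffices to set $s=0$ in the PDE~\eqref{eq_KP_Ising} and check that each term specializes as claimed.

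The first step is to verify that the specialization $s\mapsto 0$ commutes with the differential operators $\Lambda$ and $\OpLeft$. Inspecting the definitions~\eqref{Lambda-def} and~\eqref{OpLeft-def}, one sees that the coefficients of both operators are polynomials in $t$, $\nub$, $\nuw$, and the derivations $D_t, \partial_\bullet, \partial_\circ, D_\bullet, D_\circ$, with no occurrence of $s$ (no derivative $\partial_s$ appears either, since $\OpLeft$ only involves $D_t$, $D_\circ$, $D_\bullet$ and $\partial_\bullet$). Consequently,
\[
  \bigl(\Lambda^k I\bigr)\big|_{s=0} = \Lambda^k P
  \qquad\text{and}\qquad
  \bigl(\OpLeft I\bigr)\big|_{s=0} = \OpLeft P,
\]
for every $k\ge 1$.

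Applying these identities to~\eqref{eq_KP_Ising}, the term $\tfrac{s}{12}\Lambda^4 I$ vanishes identically, $(\Lambda^2 I)^2$ becomes $(\Lambda^2 P)^2$, and the prefactor $t(\nuw+2t^3(2\nuw^4+\nub\nuw^2+2\nub^2+3\nuw))$ is unaffected. The only remaining verification is to compute $Q|_{s=0}$ and check that it equals the stated $Q_0$. Reading off the expression for $Q$ in Theorem~\ref{thm:PDE-Ising}, one sees that the terms multiplied by $s$ (namely $(\nuw^5+2\nuw^2+\nub)s$ and the long $t^3s$-term) drop out, leaving exactly
\[
  Q_0 = 2\nuw(2\nuw^4+\nub\nuw^2+2\nub^2+3\nuw) + 2(2\nuw^4+\nub\nuw^2+2\nub^2+3\nuw)^2 t^3,
\]
as claimed. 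The equation is then of order at most $2$ because $\Lambda$ is a first-order operator and the right-hand side contains only $\Lambda$ and $\Lambda^2$ (the fourth-order term carried the factor $s$), while $\OpLeft$ on the left-hand side is manifestly of order $2$. No step is genuinely delicate here; the only ``obstacle'' is a careful bookkeeping of which monomials in $Q$ carry an explicit~$s$, since the rest of the proof reduces to the trivial observation that the operators on both sides of~\eqref{eq_KP_Ising} do not involve~$s$.
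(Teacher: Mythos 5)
Your proof is correct and follows exactly the paper's approach: set $s=0$ in~\eqref{eq_KP_Ising}, using the key fact (which the paper also highlights) that $\Lambda$ and $\OpLeft$ involve neither $s$ nor $\partial_s$, so specialization commutes with them; the $\frac{s}{12}\Lambda^4 I$ term drops, and $Q|_{s=0}=Q_0$. Your write-up is simply a more detailed unpacking of the paper's one-sentence argument.
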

\begin{proof} This is obtained by setting $s=0$ in the PDE~\eqref{eq_KP_Ising} satisfied by $I$. A key property is that the operators $\Lambda$ and $\Omega$ do not affect the exponent of $s$.  
\end{proof}

Of course this is a complicated result, compared to the fact that the Ising \gf\ of \emm rooted, planar cubic maps, namely $2D_tP$,
is an explicit algebraic series in $t, \nub, \nuw$ (see~\cite{Ka86,mbm-schaeffer-ising,eynard-book}). In Section~\ref{sec:planar-check}, we derive from the above result a PDE satisfied by the rooted series, and check that the known solution satisfies it.

\begin{remark}
  More generally, extracting from the PDE~\eqref{eq_KP_Ising} the coefficient of $s^g$  gives a PDE for the Ising \gf\ of maps of genus $g$ in terms of the series counting maps of smaller genus.
\end{remark}

\subsection{The monochromatic white case, and the Goulden-Jackson recursion}

We now consider monochromatic white cubic maps, that is, those in which all edges are monochromatic white. Let $M$ be the restriction to such maps of the series $I$ defined by~\eqref{I-def}. We furthermore set $\nuw=1$ in this series, as this variable becomes redundant. Hence $M$ is a series in $t$ and $s$ only.

\begin{cor}\label{cor:white}
  The \gf\ $M$ of cubic (uncolored) maps, labeled on half-edges, counted by edges ($t$) and genus $(s)$ satisfies
  \begin{multline*}
    24 t^{7} \left(t M''+3 M'\right)^{2}
    +4  s \,t^{9}M^{(4)}
    +72  s \,t^{8} M^{(3)}
    +t \left(348 s \,t^{6}+48 t^{6}+12 t^{3}-1\right) M''\\
    +4\left(105 s \,t^{6}+36 t^{6}+9 t^{3}-1\right) M' 
    +3 t^{2} \left(32 s \,t^{3}+8 t^{3}+s +4\right)=0.
  \end{multline*}
  Equivalently, the number $\vec M(n,g)$ of \emm rooted, cubic maps with $3n$ edges and genus $g$ satisfies, for $n\ge 1$ and $g\ge 0$,
  \beq\label{recGJ}
    (n+1)  \vec M_{n,g}=4 n (3n-2)(3n-4) \vec M_{n-2,g-1} +4 \sum_{i+j=n-2 \atop h+k=g} (3i+2)(3 j+2) \vec M_{i,h}\vec M_{j,k},
  \eeq
  with the initial condition $\vec M_{n,g}= \delta_{n,0} \delta_{g,0} -\frac 1 2 \delta_{n,-1} \delta_{g,0}$ for $n\le 0$ or $g<0$.
\end{cor}
The recursion was first established in~\cite{goulden-jackson-KP}. Given the initial condition, the index $i$ ranges from $-1$ to $n-1$ in the summation~\eqref{recGJ}. The above differentialequation can be rewritten in a  more compact way upon introducing  the series $R:=2t^3M'-1/(2t)+t^2$:
\beq\label{R-diff} 
  R  =
 12 t^{6}( R')^2+ 4   s t^{9} R'''+36   s t^{8} R''+t \left(60 s t^{6}-1\right) R'.
\eeq

\begin{proof}
  The contribution of a colored map $\m$ in the Ising series $I$ is of the form $t^{e} \nuw^i \nub^j s^g$, with $e\ge i$. Equality means that the map is white monochromatic, and in this case $j=0$.

  Let us now examine the effect of the operators $\Lambda$ and $\Omega$ on a monomial $m:=t^{\eps + i}\nuw^i \nub^j  s^g$, with $\eps \ge 0$, and in particular on  the exponents of $t$ and $\nuw$. We will see that these operators do not decrease the difference between the exponent of $t$ and the exponent of $\nuw$. Define the following two operators:
  \[
    \Lambda_\circ := 2t^2\nuw^2 (D_t-D_\bullet), \qquad 
    \Omega_\circ:=\frac 1 3 (D_t+D_\circ-D_\bullet-1) \circ \Lambda_\circ -\left( t\nuw (D_t-D_\bullet)\right)^2,
  \]
  and write
  \[
    \Lambda= \Lambda_\circ + \Lambda_1, \qquad   \Omega= \Omega_\circ + \Omega_1.
  \]
  Then
  \[
   \Lambda_\circ(m)=  \Lambda_\circ \left(t^{\eps +i} \nuw^i \nub^j s^g \right)= t^{2+i+\eps}\nuw^{2+i}  \lambda(\nub,s),
  \]
  \[
    \Omega_\circ (m)=  \Omega_\circ \left(t^{\eps +i} \nuw^i \nub^j  s^g\right)= t^{2+i+\eps}\nuw^{2+i}\omega( \nub,s),
  \]
  for some functions $\lambda$ and $\omega$ that do not involve $t$ nor $\nuw$.  On the other hand, in all monomials occurring in $\Lambda_1(m)$ and $\Omega_1(m)$, the exponent of $t$ exceeds the exponent of $\nuw$ by at least $1+\eps$.

  Hence, if we extract from~\eqref{eq_KP_Ising} the monomials where $t$ and $\nuw$ have the same exponent, we obtain
  \[
    \Omega_\circ M_\circ= \frac s{12} \Lambda_\circ^4 M_\circ+ \frac 1 2 \left( \Lambda_\circ^2 M_\circ\right)^2 + t\nuw(1+4 t^3 \nuw^3) \Lambda_\circ^2 M_\circ+ t^5 \nuw^5 (4+s) + 8t^8 \nuw^8 (1+4s),
  \]
  where $M_\circ$ is obtained from $I$ by extracting monomials where $t$ and $\nuw$ have the same exponent.
  Equivalently, $M_\circ$ is the series $M$ evaluated at $t\nuw$. This series does not involve the variable~$\nub$: hence, in the above identity, we can replace $\Lambda_\circ$ and $\Omega_\circ$, respectively, by
  \[
    2t^2\nuw^2 D_t \qquad \text{and} \qquad
    \frac 1 3 \left(D_t+D_\circ-1\right) \circ (2t^2\nuw^2 D_t) -\left( t\nuw D_t\right)^2. 
  \]
  Observe further that the operators $D_t$ and $D_\circ$ act in the same way on monomials in which $t$ and $\nuw$ have the same exponent, and thus on monomials of $M_\circ$. Hence we can replace $D_\circ$ by $D_t$ above. Setting finally $\nuw=1$ gives
  \[
    \overline\Omega_\circ M= \frac s{12} \overline\Lambda_\circ^4 M+ \frac 1 2 ( \overline\Lambda_\circ^2 M)^2 + t(1+4 t^3) \overline\Lambda_\circ^2 M+ t^5  (4+s) + 8t^8 (1+4s),
  \]
  with
\[ 
    \overline\Lambda_\circ :=
    2t^2 D_t \qquad \text{and} \qquad
    \overline\Omega_\circ:=
    \frac 1 3 (2D_t-1) \circ (2t^2D_t)-( tD_t)^2.
  \]
  This is the fourth order differential equation announced in the corollary. The differential equation~\eqref{R-diff} for the series $R$ follows, and  the recursion~\eqref{recGJ} is obtained by writing
  \[
    M= \sum_{n\ge 1, g\ge 0} \frac{\vec M_{n,g}}{6n} t^{3n}s^g,
    \qquad
    \text{hence} \qquad R=\sum_{n\ge -1, g\ge 0} \vec M_{n,g}t^{3n+2}s^g,
  \]
  and extracting the coefficient of $t^{3n+2}s^g$ in~\eqref{R-diff}.
\end{proof}


\begin{remark}
  One can combine specializations. For instance, a DE for the generating function $M_0$ of planar white cubic maps is obtained by setting $s=0$ in the DE of Corollary~\ref{cor:white}:
  \[
    24 t^{7} \left(t M_0''+3 M_0'\right)^{2}
       +t \left(48 t^{6}+12 t^{3}-1\right) M_0''
    +4\left(36 t^{6}+9 t^{3}-1\right) M_0' 
    +12 t^{2} \left(2 t^{3}+1\right)=0.
  \]
  Its solution, given by
  \[
    2t M_0'=\sum_{n\ge 1} \frac{2\cdot 8^n}{(n+1)(n+2)} \binom{3n/2}{n} t^{3n},
  \]
  has been known since the early work of Mullin, Nemeth  and Schellenberg~\cite{mullin-nemeth-schellenberg}. The above series satisfies a polynomial equation of degree $3$.
  \end{remark}

\subsection{The unicellular case}
The planar case studied in Section~\ref{sec:planar} corresponds to maps having a maximal number of faces, given their edge number. Here we study the other extreme, with maps having a single face (also called \emm unicellular,), or equivalently maximal genus. Let $U$ be the restriction to unicellular maps of the series $I$ defined in~\eqref{I-def}. We furthermore set $s$ to $1$, as the edge number and the genus are then directly related by $\ee(\m)= 3(2\gen(\m)-1)$. In particular, there are no unicellular cubic maps in genus $0$.

We obtain a first \emm linear, PDE for $U$ by specializing our general result to the unicellular case, and then a simpler one by adapting  a combinatorial construction of monochromatic unicellular maps due to Chapuy~\cite{chapuy-new}. The latter PDE yields
explicit hypergeometric expressions for the number of Ising maps of size $3n$ having only a fixed number $d$ of bicolored edges, for small values of $d$. We conjecture that such formulae exist for any $d$. We also conjecture hypergeometric expressions for maps with $d$ monochromatic edges, for $d$ small.

\subsubsection{Two PDEs}
We begin with a PDE derived from~\eqref{eq_KP_Ising}.

\begin{cor}\label{cor:uni} 
  The Ising \gf\ $U$ of unicellular cubic maps satisfies the following fourth order \emm linear, PDE in the variables $t, \nub$ and $\nuw$:
  \[
    \OpLeft U=    \frac 1 {12}  \Lambda^4 U +t^5\left( \nuw^5+2\nuw^2+\nub\right),
  \]
  where $\Lambda$ is defined by~\eqref{Lambda-def} and $\OpLeft$ by~\eqref{OpLeft-def}.
\end{cor}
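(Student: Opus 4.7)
The plan is to derive the PDE for $U$ by extracting the unicellular part of the bipartite PDE~\eqref{eq_KP_bip} for $B$, and then pushing the resulting equation to the Ising variables via a specialization of the change of variable $\Phi$. The combinatorial key is that unicellular Ising cubic maps correspond exactly to bipartite maps of $\mathcal B$ with a single face: in the identity $B=\frac{u^2}{2}\log\frac{1}{1-z^2p_2q_2}+u^2\Psi(I)$ from Proposition~\ref{prop:change}, the first summand sits in degree $u^2$, while in $u^2\Psi(I)$ each term of $I$ indexed by $(n,g)$ acquires a factor $u^{2+n-2g}=u^{\ff(\m)}$, so the extraction $[u^1]$ picks out exactly the unicellular contributions and gives $B_1:=[u^1]B=\Psi_0(U)$, where $\Psi_0$ denotes the restriction of $\Psi$ to $s=u=1$.

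First I would take $[u^1]$ in~\eqref{eq_KP_bip}. Since every map of $\mathcal B$ has at least one face, $[u^0]B=0$, and several terms collapse: the nonlinear product $\tfrac12(L^2B)^2$ vanishes at $u^1$ because its $u^1$-coefficient is the Cauchy sum $L^2B|_{u^0}\cdot L^2B|_{u^1}$, and the term $\frac{uz^2q_2}{1-z^2p_2q_2}L^2B$ reduces to $\frac{z^2q_2}{1-z^2p_2q_2}L^2B|_{u^0}=0$; from $R$ only the $u^1$-part $\frac{z^6(q_2^5z^4+2q_2^2z+p_2)}{(1-z^2p_2q_2)^5}$ survives. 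The result is a linear PDE for $B_1$:
\[
\tfrac13(z\partial_z-2p_2\partial_{p_2}-1)L B_1 = \partial_{p_2}^2 B_1 + \tfrac1{12}L^4 B_1 + \frac{z^6(q_2^5z^4+2q_2^2z+p_2)}{(1-z^2p_2q_2)^5}.
\]

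Next I would invert $\Psi_0$ by applying $\Phi_0:\, z\mapsto t(1-\nub\nuw),\, p_2\mapsto\nub/(t(1-\nub\nuw)),\, q_2\mapsto\nuw/(t(1-\nub\nuw))$, so that $\Phi_0(B_1)=U$. The differential identities of Lemma~\ref{lem_change} remain valid after this specialization (the scalars $s^{1/6},s^{1/3}$ become $1$): one has $\Phi_0\circ L=\Lambda\circ\Phi_0$, $\Phi_0\circ\partial_{p_2}=(t\nuw D_t+t(1-\nub\nuw)\partial_\bullet)\circ\Phi_0$, $\Phi_0\circ z\partial_z=(\frac{t(1+\nub\nuw)}{1-\nub\nuw}\partial_t+\nub\partial_\bullet+\nuw\partial_\circ)\circ\Phi_0$, and the chain rule further gives $\Phi_0\circ p_2\partial_{p_2}=(\frac{t\nub\nuw}{1-\nub\nuw}\partial_t+\nub\partial_\bullet)\circ\Phi_0$. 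Combining these produces the clean cancellation $\frac{1+\nub\nuw-2\nub\nuw}{1-\nub\nuw}=1$, so that the LHS is transformed into $\frac{1}{3}(D_t+D_\circ-D_\bullet-1)\Lambda U$. On the RHS, the square $\Phi_0\circ\partial_{p_2}^2=(t\nuw D_t+t(1-\nub\nuw)\partial_\bullet)^2\circ\Phi_0$, once moved to the left, completes the operator $\OpLeft$ of~\eqref{OpLeft-def}, while $\Phi_0\circ\frac{1}{12}L^4=\frac{1}{12}\Lambda^4\circ\Phi_0$ gives the announced $\Lambda^4 U$ term.

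The main obstacle, beyond the bookkeeping of the operator identities, is computing $\Phi_0$ of the inhomogeneous term. The calculation is direct: $\Phi_0(z^2p_2q_2)=\nub\nuw$ so $\Phi_0(1-z^2p_2q_2)=1-\nub\nuw$, and the fifth power in the denominator cancels with an identical factor $(1-\nub\nuw)^5$ arising in the numerator; each of the three summands $q_2^5z^{10}$, $2q_2^2z^7$, $p_2z^6$ then collapses to $t^5\nuw^5$, $2t^5\nuw^2$, $t^5\nub$ respectively, producing the inhomogeneous term $t^5(\nuw^5+2\nuw^2+\nub)$.
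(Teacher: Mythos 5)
Your proof is correct, and it takes a genuinely different route from the paper's. The paper starts from the already-transformed Ising PDE~\eqref{eq_KP_Ising} and extracts the unicellular contribution by tracking monomials of the form $t^{2+3(2g-1)}s^g\nuw^i\nub^j$, which requires analyzing how $\Lambda$ and $\Omega$ shift the $t$- and $s$-exponents (in particular, observing that $s\Lambda^4$ raises $g$ by one while preserving the ``unicellular'' relation between the $t$- and $s$-powers, whereas the $\Lambda^2 I$ terms overshoot). You instead extract the unicellular part one step \emph{earlier}, at the bipartite level, where it is simply the coefficient $[u^1]B$; this makes the dropout of the nonlinear and low-order terms immediate ($[u^0]B=0$), and you then push the resulting linear PDE through the specialized change of variables $\Phi_0=\Phi|_{s=1}$. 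In effect you use the commutativity of ``extract the unicellular slice'' and ``change variables.'' The net bookkeeping is arguably lighter your way, since extraction in $u$ is one-dimensional and the troublesome $\frac12(L^2B)^2$ and $\frac{uz^2q_2}{1-z^2p_2q_2}L^2B$ terms kill themselves by inspection; the price is that you must re-verify that Lemma~\ref{lem_change} survives the $s=1$ specialization, which you do correctly (the $s^{1/6}$ and $s^{1/3}$ prefactors just become $1$, and the Jacobian computation in the proof of the lemma never touched $u$ or $s$ in an essential way). The paper's route has the advantage of working uniformly with the same object~\eqref{eq_KP_Ising} used for the planar and monochromatic specializations. All of your operator identities, including $\Phi_0\circ p_2\partial_{p_2}=\bigl(\frac{t\nub\nuw}{1-\nub\nuw}\partial_t+\nub\partial_\bullet\bigr)\circ\Phi_0$ and the resulting cancellation giving $D_t+D_\circ-D_\bullet-1$, as well as the evaluation of the inhomogeneous term $\Phi_0\bigl(\frac{z^6(q_2^5z^4+2q_2^2z+p_2)}{(1-z^2p_2q_2)^5}\bigr)=t^5(\nuw^5+2\nuw^2+\nub)$, check out.
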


\begin{proof}
  The contribution of a bicolored map $\m$ in the Ising series $I$ is of the form $t^{3n} s^g \nuw^i \nub^j$, and by Euler's relation, the number of faces of $\m$ is then $2+n-2g$. Since there is at least one face,  one always has $n\ge 2g-1$, and  equality holds for unicellular maps only.

  Let us now examine the effect of the operators $\Lambda$ and $\Omega$ on a monomial $t^{\eps + 3(2g-1)} s^g \nuw^i \nub^j$, with $\eps \ge 0$, and in particular on the exponents of $t$ and $s$. We find:
  \[
    \Lambda \left(t^{\eps +3( 2g-1)} s^g \nuw^i \nub^j\right)= t^{2+\eps +3( 2g-1)} s^g \lambda(\nuw, \nub),
  \]
  \[
    \Omega \left(t^{\eps +3( 2g-1)} s^g \nuw^i \nub^j\right)= t^{2+\eps +3( 2g-1)} s^g \omega(\nuw, \nub),
  \]
  for some functions $\lambda$ and $\omega$ that do not involve $t$ nor $s$. In particular,
  \[
    s\Lambda^4 \left(t^{\eps +3( 2g-1)} s^g \nuw^i \nub^j\right)= t^{8+\eps +3( 2g-1)} s^{g+1} \lambda_4(\nuw, \nub)=t^{2+ \eps+3( 2(g+1)-1)} s^{g+1} \lambda_4(\nuw, \nub).
  \]
  Hence, if we extract from~\eqref{eq_KP_Ising} the monomials of the form $t^{2+3( 2g-1)} s^g\nuw^i \nub^j$, we find that the terms involving $\Lambda^2I$ do not contribute, that $\Omega I$ contributes $\Omega U$, while $s\Lambda^4I$ contributes~$s\Lambda^4 U$. This gives the announced equation on $U$.
\end{proof}

A simpler PDE, also linear but of order $3$ only, can be written for the series $U$ by adapting to unicellular Ising cubic maps a construction designed by Chapuy for monochromatic unicellular cubic maps~\cite[Sec.~6.2]{chapuy-new}.

\begin{prop}\label{prop:PDE-U-simple}
   The Ising \gf\ $U$ of unicellular cubic maps is characterized by the following third order  linear PDE in the variables $t, \nub$ and $\nuw$:
  \[ 
  \left(6 + 2 D_t - \Fw^3 -\Fb^3\right)U=0,
\]
together with the initial condition for maps with $3$ edges (genus $1$):
\[
U_1:= t^3 [t^3] U=\frac{t^3}{6}(\nuw^3+\nub^3+{2}),
\]
where the operators $\Fw$ and $\Fb$ are defined by
\[
  \Fw= 2t^2\left((\nub+\nuw^2)
    \left(D_t -D_\circ -D_\bullet
    \right)+(\nub^2+\nuw)
     \partial_\bullet    +(1+\nuw^3)
    \partial_\circ\right),
\]
and symmetrically
\[
  \Fb= 2t^2\left((\nuw+\nub^2)\left(D_t -D_\circ -D_\bullet
    \right)+(\nuw^2+\nub)
    \partial_\circ  +(1+\nub^3)
   \partial_\bullet\right).
\]
Equivalently, if $U_g$ is the contribution in $U$ of maps of genus $g$, that is, $U_g=t^{3(2g-1)}[t^{3(2g-1)}] U$, we have, for $g\geq 2$:
\beq\label{Ug-rec}
  2g U_g=\frac{1}{6}(\Fb^3+\Fw^3)U_{g-1}.
\eeq
\end{prop}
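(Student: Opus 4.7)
The plan is to adapt Chapuy's combinatorial construction for monochromatic unicellular cubic maps from~\cite[Sec.~6.2]{chapuy-new} to the Ising setting, which directly produces the recurrence~\eqref{Ug-rec} relating $U_g$ to $U_{g-1}$. The linear PDE is then an equivalent reformulation of this recurrence together with the initial condition for $U_1$: indeed, since every unicellular cubic map of genus $g$ has $3(2g-1)=6g-3$ edges, each homogeneous component $U_g$ satisfies $D_t U_g=(6g-3)U_g$ and hence $(6+2D_t)U_g=12g\,U_g$, while $\Fw^3$ and $\Fb^3$ each shift the $t$-degree by $6$, thereby mapping the genus-$(g-1)$ component to the genus-$g$ component. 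Reading off the coefficient of $t^{6g-3}$ converts the PDE into~\eqref{Ug-rec} for $g\geq 2$, while the $g=1$ component is fixed by the stated initial condition.

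Following Chapuy, the bijective heart of the argument is to construct a unicellular cubic map of genus $g$ together with a distinguished ``trisection'' (an object of which there are $2g$ per map, accounting for the factor $2g$ on the left of~\eqref{Ug-rec}) from a unicellular cubic map of genus $g-1$ by inserting a single new cubic vertex together with its three incident edges. The new vertex may be either white (contributing the $\Fw^3$ term) or black (contributing $\Fb^3$); the three applications of $\Fw$ or $\Fb$ correspond to the three edges incident to the new vertex, and the $1/6$ factor accounts for their cyclic ordering around it. Each individual insertion grafts one new edge at a corner of an edge of the smaller map, of one of three possible color types: monochromatic of the same color as the new vertex (selected by $\partial_\circ$ for a white insertion, with weight $(1+\nuw^3)$), of the opposite color (selected by $\partial_\bullet$, with weight $(\nub^2+\nuw)$), or bicolored (selected by $D_t-D_\circ-D_\bullet$, with weight $(\nub+\nuw^2)$). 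The polynomial weight in each case tallies the compatible colorings of the two new edges created locally by the insertion, while the prefactor $2t^2$ records the addition of two edges in the half-edge labeled setting. The initial condition $U_1=(t^3/6)(\nuw^3+\nub^3+2)$ is verified by direct enumeration of the two topological types of cubic map on the torus with two vertices, as illustrated in Figure~\ref{fig:small-ising}.

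The main obstacle is showing that Chapuy's bijection can be color-enriched in precisely the way needed to produce the specific polynomial weights appearing inside $\Fw$ and $\Fb$. Each local insertion modifies an existing edge in a way that depends intricately on its color and on the orientation of the coloring around it; tallying these constraints must yield exactly the claimed polynomials $(1+\nuw^3)$, $(\nub+\nuw^2)$, $(\nub^2+\nuw)$, with the breaking of the symmetry between $\nub$ and $\nuw$ inside a single operator $\Fw$ (restored only by summing $\Fw^3+\Fb^3$) properly controlled. A concrete sanity check for the whole construction is to compute $U_g$ for small $g$ both via the recursion~\eqref{Ug-rec} and via the general PDE of Theorem~\ref{thm:PDE-Ising} using Proposition~\ref{prop:unique}, and verify that the two computations agree.
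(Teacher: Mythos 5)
Your overall plan is the right one, and it matches the paper's strategy: adapt Chapuy's trisection bijection to the Ising setting, count genus-$g$ unicellular cubic maps with a marked trisection ($2g$ per map) via genus-$(g-1)$ precubic maps with three same-colored leaves, realize the leaf-additions by a differential operator, and then repackage the recurrence~\eqref{Ug-rec} as the stated linear PDE using the edge-genus relation $\ee=3(2g-1)$. The pairing between the selection operators and the weight polynomials inside $\Fw$ is also stated correctly (bicolored $\leftrightarrow \nub+\nuw^2$, black $\leftrightarrow \nub^2+\nuw$, white $\leftrightarrow 1+\nuw^3$).

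However, there is a genuine gap, which you yourself flag as ``the main obstacle'' and then replace by a proposed sanity check: you never derive the weight polynomials. This is not a minor detail to defer — it is the substance of the proposition, and the paper does carry it out. The derivation is actually elementary once the construction is stated precisely: a single application of $\Fw$ picks an edge $e$ of the smaller precubic map, subdivides it with a new vertex $v$ (whose color may be chosen black or white), and attaches from $v$ a new edge to a white leaf. After the move, $v$ is incident to \emph{three} edges (the two halves of $e$ and the leaf-edge), so two edges are added net (hence $t^2$), and the $\partial$-operator removes the weight of the original edge $e$; the polynomial factor then just tallies, in six cases (three types of $e$ times two colors of $v$), the weights of the three edges around $v$. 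For instance for $e$ white and $v$ black one gets weight $1$, and for $e$ white and $v$ white one gets $\nuw^3$, giving $1+\nuw^3$ in front of $\partial_\circ$; similarly for the other two types. Your description slightly obscures this: you write of grafting ``one new edge at a corner of an edge'' and of ``two new edges created locally,'' and you invoke an ``orientation of the coloring,'' none of which corresponds to the actual move (there is no orientation; the asymmetry in $\Fw$ just reflects that the leaf is forced to be white). You also speak of the step from genus $g-1$ to genus $g$ as ``inserting a single new cubic vertex together with its three incident edges,'' whereas what happens is that $\Fw$ (or $\Fb$) is applied three times to produce three same-colored leaves on three distinct subdivision vertices, and only then Chapuy's bijection recombines those three leaves and a trisection structure into a cubic map; collapsing this into a one-shot vertex insertion skips the bijection that actually increases the genus and produces the $2g$ count. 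Finally, the factor $2$ in $2t^2$ is not a generic half-edge bookkeeping artifact as you suggest: it is the explicit choice of which of the two sides of $e$ the new leaf-edge is attached on. Filling in this case analysis, and describing the leaf-insertion move accurately, is what is needed to turn your outline into the paper's proof.
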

\begin{proof}
  Let us call \emm precubic, a map with vertices of degree $1$ and $3$ only. Chapuy's construction~\cite{chapuy-new} implies that there exists a bijection between
  \begin{itemize}
  \item 
    unicellular cubic (Ising) maps of genus~$g$ with a distinguished \emm trisection,,
    \item and unicellular precubic (Ising) maps of genus $g-1$ having exactly~$3$ leaves, of the same color.
  \end{itemize}
  This bijection preserves the number of edges of each type.  Moreover, a unicellular Ising map of genus $g$ has exactly $2g$ trisections~\cite[Lem.~3]{chapuy-new}. Hence the term $2g U_g$ on the left-hand side of~\eqref{Ug-rec} counts unicellular cubic Ising maps of genus $g$ with a distinguished trisection. We will show that $\frac 1 6 \Fw^3 U_{g-1}$  counts unicellular precubic Ising maps of genus $g-1$ having exactly~$3$ white leaves, and this will prove~\eqref{Ug-rec} by a symmetry argument.

  Consider  a unicellular precubic Ising map $\m$, with Ising weight
      \[
 W:= \frac{t^{e}}{(2e)!}
  \nuw^{e^\circ}\nub^{e^\bullet},
\]
and construct a unicellular precubic map having a marked white leaf as follows: choose an edge $e$ in $\m$,  create a vertex~$v$ in the middle of $e$ and insert a new edge starting from $v$ and ending at a white leaf, lying on one of the two sides of $e$.  The color of $v$ can be chosen in two possible ways. Then
the sum of the Ising weights of all maps obtained by the above construction is
\[
2  \frac{t^{e+2}}{(2e+4)!}  \nuw^{e^\circ}\nub^{e^\bullet}
\left( (e-e^\circ-e^\bullet) (\nub+\nuw^2)
  + e^\bullet (\nub+\nuw/\nub)
  + e^\circ ( 1/\nuw+\nuw^2)
    \right) \binom{2e+4}{4}4! = \Fw( W).
  \]
  In this expression, one can read off the type of the edge $e$ (first bichromatic, then black, then white), the color of $v$ (first black, then white), and the choice of $4$ half-edge labels, one at the new leaf, the other $3$ around $v$.

 To obtain a precubic unicellular Ising map with exactly $3$ leaves, all of them white, we start from a \emm cubic, unicellular Ising map, apply three times the above construction, and forget the order in which the leaves were created (this results in a factor $1/6$). This concludes the proof of~\eqref{Ug-rec}.
 Now recall that unicellular cubic maps of genus $g$ have $3(2g-1)$ edges. Summing~\eqref{Ug-rec} over $g$ then gives the first equation of the proposition. The value of $U_1$ can be obtained from the central column of Figure~\ref{fig:small-ising}. 
\end{proof}

\subsubsection{Explicit coefficients}

Let us denote by $U_{n,k,\ell}$ the coefficient of $t^{3n} \nub^k \nuw^\ell$ in the series~$U$.  Recall that $U_{n,k,\ell}$ is zero if $k+\ell >3n$, but also if $k$ and $\ell$ do not  differ by a multiple of~$3$. Moreover, the only non-zero coefficients $U_{n,k,\ell}$ with $k+\ell =3n$ are $U_{n,3n,0}$ and the symmetric term $U_{n,0,3n}$, which count monochromatic unicellular cubic maps. These numbers are known to have a simple expression: for $n=2g-1$, 
\[
U_{n,3n,0} = \frac{(6g-4)!}{12^{g} g! (3g-2)!}.
\]
A rooted version of this result, that is, the above number multiplied by $6(2g-1)$
can be found explicitly for instance in~\cite[Cor.~8]{chapuy-new}.  But it is also equivalent to a special case of an older result due to   Walsh and Lehman~\cite[Eq.~(9)]{walsh-lehman-I}.

It seems that this hypergeometric pattern persists for maps with ``many'' monochromatic edges.

\begin{conjecture}\label{conj:numbers-uni}
  For fixed $i, j\ge 0$, there exists a rational function $R_{i,j}(g)$ in $g$
  such that for $n=2g-1$, 
  \beq\label{many-HG}
        U_{n,3n-i,j} = R_{i,j}(g) \frac {(6g)!}{12^g g!(3g)!}.
  \eeq
  These numbers vanish when $i+j$ is not a multiple of $3$ or when $j>i$.
  \end{conjecture}

 For instance,   for $i+j=3$,  we have
\[
  U_{n,3n-2,1}=0 \quad \text{and} \quad U_{n, 3n-3,0}=\frac 2 3\cdot \frac{(6g-3)!}{{12}^{g} g! (3g-2)!}
\]
while for $i+j=6$,
\[
  U_{n,3n-4,2}                                                                              =\frac{6\cdot (6g-6)!}{12^{g} (g-1)! (3g-3)!}, \quad 
  U_{n,3n-5,1}   = (12 g^2-18g+5) \cdot \frac{6\cdot (6g-6)!}{12^{g} g! (3g-3)!},
\]
\[
  U_{n,3n-6,0}  =  (4g-5)(12 g^2-19g+6)\cdot \frac{2\cdot (6g-6)!}{12^{g} g! (3g-3)!}.
\]

\begin{proof}[Some evidence for Conjecture~\ref{conj:numbers-uni}] We have proved this conjecture for $i+j\le 24$ and it would not be hard to push this further. Let us explain how this proof works.
  The recursion~\eqref{Ug-rec} translates into a recurrence relation that gives $U_{n,3n-i,j}$ in terms of $22$ coefficients  $U_{n-2,3n-6-i',j'}$ where $i'+j'\le i+j$ and $i'\le i$ if $i'+j'= i+j$; see our \Maple\ session for details. 
The term involving $U_{n-2,3n-6-i,j}$ can be written explicitly, for $n=2g-1$:
\[
  U_{n,3n-i,j}= \frac{2(6g-j-5)(6g-j-7)(6g-j-9)}{3g} U_{n-2,3n-6-i,j}+ \cdots
\]
This allows us to prove expressions of the form~\eqref{many-HG} by increasing induction on $i+j$, and, for $i+j$ fixed, increasing induction on~$i$, as follows:
\begin{itemize}
\item  we generate the numbers $U_{n,3n-i,j}$ for many values of $n$,
\item then we guess from these values   a hypergeometric expression of the form~\eqref{many-HG},
  \item we  finally prove this expression by checking that it satisfies  the above recurrence relation.
\end{itemize}
The form of this relation does not seem to imply directly our conjecture, however. 

\smallskip
Another evidence for the above conjecture is the case $i-j=1$. That is, only one of the~$3n$ edges is bicolored. This edge joins two unicellular maps, one black with $3n-i$ edges, one white with $j=i-1$ edges.  Splitting this bicolored edge in its middle gives two unicellular precubic monochromatic maps, rooted canonically at their unique vertex of degree $1$, having respectively $3n-i+1$ and $i$ edges. Euler's relation then forces $i$ to be of the form $6\ell-1$. Using Chapuy's results on (rooted)  precubic maps~\cite{chapuy-new}, one obtains
   \[
    U_{n,3n-6\ell +1, 6\ell-2}
    =\frac{(6\ell-2)!}{\ell !(3\ell -1)!}  \cdot \frac{(6g-6\ell-2)!}{12^g (g-\ell)!  (3g-3\ell-1)! }.
  \]
  This proves the conjecture in the case $i-j=1$.
  \end{proof}

    Above we have considered Ising maps with many monochromatic edges. At the other end of the scale we have bipartite maps, counted by coefficient of $t^{n} \nub^0 \nuw^0$. It seems that we have no direct access to these numbers, even though they have a simple expression:
\[
  U_{n,0,0}= 2 \cdot \frac{(3g-2)!(2g-3)!}{3^{g}  g! (g-1)! (g-2)!}.
\]
This is equivalent to a special case of~\cite[Thm.~2.1]{goupil-schaeffer}. We  predict other similar hypergeometric formulae for unicellular Ising map with few monochromatic edges, \emm e.g.,, for $n=2g-1$: 
\begin{align*}
  U_{n,0,3}
  &= \frac 1 4  (3g-5) \cdot \frac{(3g-4)!(2g-3)!}{3^{g-1}  (g-1)! (g-2)!^2} , \\
  U_{n,1,1}
  &=   \frac 1 2 \cdot \frac{(3g-5)!(2g-3)! }{3^{g-3}  (g-1)! (g-2)!(g-3)!} , \\
  U_{n,1,4}
 & =   \frac 1 {32} (18g^3-75g^2+75g-2) \cdot \frac{(3g-4)!(2g-3)!}{3^{g-2}  g! (g-2)!(g-3)!} , \\
  U_{n,2,2}
  &=    \frac 1 {16}(54g^3-225g^2+231g-4) \cdot \frac{(3g-5)!(2g-3)! }{3^{g-2} g! (g-2)!(g-3)!}.
\end{align*}

\begin{conjecture}
    For fixed $i, j\ge 0$,  there exists a rational function $Q_{i,j}(g)$ in $g$
  such that for $n=2g-1$, 
  \[
         U_{n,i,j} = Q_{i,j}(g) \frac {(2g)!(3g)!}{3^g g!^3}.
  \]
  These numbers vanish when $i-j$ is not a multiple of $3$. 
\end{conjecture}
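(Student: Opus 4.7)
The plan is to adapt the inductive scheme outlined for Conjecture~\ref{conj:numbers-uni}. Starting from the recurrence~\eqref{Ug-rec}, a direct computation shows that $\Fw$ sends $\nub^a\nuw^b$ to monomials with exponent shifts $(a'-a,b'-b)\in\{(1,0),(0,2),(-1,1),(0,-1)\}$, and $\Fb$ by the symmetric set $\{(0,1),(2,0),(1,-1),(-1,0)\}$. Extracting the coefficient of $t^{3n}\nub^i\nuw^j$ (with $n=2g-1$) therefore yields an identity of the form
\[
  12g\,U_{n,i,j} \;=\; \sum_{(i',j')\in\mathcal{N}(i,j)} \nu_{(i,j),(i',j')}(g)\,U_{n-2,i',j'},
\]
where $\mathcal{N}(i,j)$ is an explicit finite neighborhood of $(i,j)$ and $\nu(g)$ is polynomial in $g$. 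Setting $R_{i,j}(g):=U_{n,i,j}/C_g$ with $C_g=(2g)!(3g)!/(3^gg!^3)$ and using that $C_{g-1}/C_g\in\mathbb{Q}(g)$, this converts into a $\mathbb{Q}(g)$-linear recurrence between the $R_{i,j}(g)$ and $R_{i',j'}(g-1)$. The conjecture then amounts to showing that $R_{i,j}(g)\in\mathbb{Q}(g)$ for each fixed $(i,j)$.

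The main obstacle is that the recurrence is not triangular. In the proof sketched for Conjecture~\ref{conj:numbers-uni}, the shifts satisfied $i'+j'<i+j$, enabling induction on $i+j$. Here, three applications of the shift $(0,-1)$ in $\Fw$ (respectively $(-1,0)$ in $\Fb$) produce pairs $(i',j')\in\mathcal{N}(i,j)$ with $i'+j'=i+j+3$, so $i+j$ can \emph{increase} along the recurrence. To bypass this, the plan is to combine the present conjecture with Conjecture~\ref{conj:numbers-uni}: terms with $i'+j'$ close to the maximum $3(n-2)$ lie in the near-monochromatic regime of that conjecture, while those with moderate $i'+j'$ fall in the present regime, and a joint induction on these two fronts should close the loop. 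Alternatively, for each fixed $(i,j)$ one may compute many values of $U_{n,i,j}$ via~\eqref{Ug-rec}, guess $Q_{i,j}(g)\in\mathbb{Q}(g)$ by rational interpolation, and verify by substitution; this yields an effective, case-by-case proof.

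A complementary route is combinatorial: the normalization $C_g$ is chosen so that $U_{n,0,0}/C_g$ is rational, matching (up to an elementary factor) the Goupil--Schaeffer formula~\cite{goupil-schaeffer} for bipartite unicellular cubic maps. Via the bijection of Proposition~\ref{prop:change}, unicellular Ising cubic maps with small $(i,j)$ correspond to bipartite unicellular maps with vertex degrees $2$ and $3$ in which only a bounded number of chains of degree-$2$ vertices have odd length. A refinement of Goupil--Schaeffer-type formulas along these lines should produce explicit hypergeometric expressions for the $Q_{i,j}$, conceivably yielding a uniform proof of the conjecture.
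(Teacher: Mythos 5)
This statement is left as a \emph{conjecture} in the paper: the authors give no proof, and in fact explicitly remark just above the conjecture that ``it seems that we have no direct access to these numbers.'' So there is nothing in the paper to compare your argument against, and the question is simply whether you have produced a proof.

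You have not: what you have written is an accurate diagnosis of the obstruction rather than a way around it. Your computation of the exponent shifts induced by $\Fw$ and $\Fb$ is correct, and you rightly observe that --- unlike for Conjecture~\ref{conj:numbers-uni}, where working near the maximal degree $3n$ makes the recursion~\eqref{Ug-rec} effectively triangular in $i+j$ --- here the shift $(0,-1)$ (resp.\ $(-1,0)$) allows $i'+j'$ to \emph{exceed} $i+j$, so the recursion does not close among monomials of bounded total degree in $\nub,\nuw$. That is precisely why the authors could only prove instances of the first conjecture and present this one as open. Each of your proposed ways out is a sketch, not an argument: the ``joint induction on two fronts'' is not formulated (there is no well-founded quantity that decreases, and the two regimes --- $i+j$ bounded versus $3n-(i+j)$ bounded --- do not meet when $n\to\infty$ with $i,j$ fixed); the interpolate-and-verify route is only effective for finitely many $(i,j)$ and cannot establish the general statement; and the Goupil--Schaeffer refinement is a hope, not a derivation. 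In short, you have correctly identified why the problem is hard, but you have not supplied a proof --- consistent with the statement's status as an open conjecture in the paper.
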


\section{Inequalities}
\label{sec:inequalities}

In this section, we prove Corollary~\ref{thm_bounds}, which gives a lower bound on the numbers $\vec I_{n,g}(\nub, \nuw)$ counting rooted unlabeled Ising maps, for $\nub$ and $\nuw$ positive.

Let $\semiring$ be the sub-semiring of $\mathbb{R}_{\geq 0}[s,\nub,\nuw][[t]]$ generated by all monomials $t^{m}s^g\nub^a\nuw^b$  such that $m\geq a+b$. The proof of Corollary~\ref{thm_bounds} relies on the PDE~\eqref{eq_KP_Ising} and exploits the fact that $I\in\semiring$, so that $(D_t-D_\circ-D_\bullet)I$ has non-negative coefficients. However, since our final inequality implies a $\min$ function, which is not compatible with differential operators, the formalization of the proof is a bit heavy.

We consider here  linear differential operators in $t$, $\nub$ and $\nuw$, whose coefficients  are polynomials in $s, \nub, \nuw$ and $t$.
This includes the identity operator, so that a monomial in $s$, $t$, $\nub$ and $\nuw$ is seen as an operator that acts by multiplication on power series.
We define an order relation on such operators as follows: we say that $\Phi\succeq \Psi$ if 
\[
  (\Phi-\Psi)(\semiring)\subset\semiring.
\]
Clearly, if $a\in \rs_{\ge 0}$ and $\Phi\succeq \Psi$, then $a\Phi\succeq a\Psi$ and $-a\Phi\preceq -a\Psi$. We also have the following properties.

\begin{lemma}\label{lem_order_composition}
  The order relation $\succeq$ is compatible with addition and with composition, in the following sense: if $\Xi\succeq \Upsilon$, $\Xi\succeq 0$
 and $\Phi\succeq \Psi\succeq 0$, then 
\beq\label{eq_order_composition}
\Xi\circ \Phi \succeq \Upsilon\circ \Psi.
\eeq
Also, given nonnegative integers $m,a,b$ satisfying $m\geq a+b$ and two operators $\Phi,\Psi\succeq 0$ with $\Phi$ homogeneous of first order, one has
\beq\label{eq_order_scalar}
\Phi \circ (t^{m}\nub^a\nuw^b\, \Psi)
\succeq t^{m}\nub^a\nuw^b\, \Phi\circ \Psi.
\eeq
\end{lemma}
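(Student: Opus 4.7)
The plan is to prove the two parts by reducing to the definition $\Phi \succeq \Psi \iff (\Phi-\Psi)(\semiring)\subseteq\semiring$.

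For part~\eqref{eq_order_composition}, I would write
\[
\Xi\circ\Phi - \Upsilon\circ\Psi = \Xi\circ(\Phi-\Psi) + (\Xi-\Upsilon)\circ\Psi.
\]
By the hypotheses $\Phi\succeq\Psi$ and $\Xi\succeq\Upsilon$, both $\Phi-\Psi$ and $\Xi-\Upsilon$ send $\semiring$ into itself; and by $\Xi\succeq 0$, $\Psi\succeq 0$, so do $\Xi$ and $\Psi$. Composition preserves this property, and $\semiring$ is closed under addition, so the sum sends $\semiring$ into $\semiring$, which is $\Xi\circ\Phi\succeq \Upsilon\circ\Psi$.

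For part~\eqref{eq_order_scalar}, write $\Phi = \alpha + \beta_t\partial_t + \beta_\bullet\partial_\bullet + \beta_\circ\partial_\circ$ with $\alpha = \Phi(1)$ and set $g := t^m\nub^a\nuw^b$. A direct Leibniz-rule calculation gives, for any polynomial $f$,
\[
\Phi(gf) - g\,\Phi(f) = f\cdot D(g),\qquad D(g):= m\beta_t t^{m-1}\nub^a\nuw^b + a\beta_\bullet t^m\nub^{a-1}\nuw^b + b\beta_\circ t^m\nub^a\nuw^{b-1},
\]
so that $\Phi\circ(g\,\Psi) - g\cdot\Phi\circ\Psi$ is the composition of multiplication-by-$D(g)$ with $\Psi$. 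Since $\Psi$ sends $\semiring$ into $\semiring$ and $\semiring$ is closed under multiplication, this composition sends $\semiring$ into $\semiring$ as soon as $D(g)\in\semiring$, which is what remains to prove.

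To prove $D(g)\in\semiring$, I note that $D(g) = \Phi(g) - \alpha g$, and both $\Phi(g)$ and $\alpha g$ lie in $\semiring$ (using $\Phi\succeq 0$, $g\in\semiring$, and $\alpha=\Phi(1)\in\semiring$), so the difference already has zero coefficients on bad monomials. The remaining point is the non-negativity of $D(g)$ on good monomials. I would argue by a monomial analysis on $\beta_t,\beta_\bullet,\beta_\circ$: testing $\Phi\succeq 0$ on arbitrarily large good monomials $t^n$ forces each $\beta_i$ to have non-negative coefficients on good monomials, while testing on boundary monomials $t^{a'+b'}\nub^{a'}\nuw^{b'}$ (and varying $a',b'$) forces the coefficients of $\beta_t,\beta_\bullet,\beta_\circ$ on bad monomials to satisfy precise cancellation identities; combining these identities with the weights $m,a,b$ in $D(g)$, the slack $m-(a+b)\geq 0$ supplied by the hypothesis is exactly what makes the total coefficient of each bad-looking output monomial vanish and each good-monomial coefficient non-negative. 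The main obstacle is the bookkeeping of which contributions from the three derivation parts overlap on a given output monomial, and verifying that $m\geq a+b$ is precisely the condition needed for all the resulting linear combinations of $\beta_i$-coefficients to be non-negative.
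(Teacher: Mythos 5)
Your part~\eqref{eq_order_composition} is fine and essentially matches the paper: you split $\Xi\circ\Phi - \Upsilon\circ\Psi$ as $\Xi\circ(\Phi-\Psi) + (\Xi-\Upsilon)\circ\Psi$ and use closure of $\semiring$ under addition and composition of $\semiring$-preserving maps; the paper instead chains $\Xi\circ\Phi\succeq\Xi\circ\Psi\succeq\Upsilon\circ\Psi$ by transitivity. These are interchangeable.

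Part~\eqref{eq_order_scalar} has a genuine gap. You allow a zeroth-order term $\alpha$ in $\Phi$, so the Leibniz correction is $D(g)=\Phi(g)-\alpha g$, and you must show that multiplication by $D(g)$ preserves $\semiring$. You observe correctly that both $\Phi(g)$ and $\alpha g$ lie in $\semiring$ (so the \emph{support} of $D(g)$ is on good monomials), but that does not give $D(g)\in\semiring$: the semiring $\semiring$ is not closed under subtraction, and you still need the coefficients of $D(g)$ on good monomials to be nonnegative. Your proposed fix --- a ``monomial analysis'' on the coefficients of $\beta_t,\beta_\bullet,\beta_\circ$, with claims about cancellation identities and how the slack $m-(a+b)\geq 0$ makes things work out --- is not carried out, and as stated it does not constitute a proof.

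The resolution in the paper is to read ``$\Phi$ of first order'' as \emph{a derivation}, i.e.\ $\Phi=\beta_t\partial_t+\beta_\bullet\partial_\bullet+\beta_\circ\partial_\circ$ with no zeroth-order term. (This is consistent with how the lemma is used later: the operator $\Lambda$ and its lower bound $2t^2\bigl(\nuw^2(D_t-D_\bullet)+\nub(D_t-D_\circ)\bigr)$ annihilate constants.) With that reading, the product rule gives $\Phi\circ(g\Psi)-g\,\Phi\circ\Psi=\Phi(g)\Psi$ exactly, where $g=t^m\nub^a\nuw^b$; since $g\in\semiring$ (this is where $m\geq a+b$ enters) and $\Phi\succeq 0$, one has $\Phi(g)\in\semiring$, and multiplication by an element of $\semiring$ preserves $\semiring$. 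Then $\Phi(g)\Psi\succeq 0$ by part~\eqref{eq_order_composition}, and you are done --- no coefficient bookkeeping is required. If you instead want to keep a general zeroth-order term $\alpha$, a cleaner route than a full monomial analysis is to split $\Phi=\alpha+\Phi_0$ with $\Phi_0$ a derivation, note that $\alpha\circ(g\Psi)=g\,\alpha\circ\Psi$ exactly, and reduce to the derivation case; but that still requires you to justify $\Phi_0\succeq 0$, which is exactly the nonnegativity claim you left unproved.
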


\begin{proof}
Compatibility with addition is clear: if $\Phi_1 \succeq \Psi_1$ and  $\Phi_2 \succeq \Psi_2$, then  $\Phi_1+\Phi_2 \ge \Psi_1+\Psi_2$. Now take $\Xi\succeq \Upsilon$ with $\Xi\succeq 0$, and $\Phi\succeq \Psi\succeq 0$.
It is direct by composition that
\[
  \Xi\circ(\Phi-\Psi)(\semiring)\subset \semiring,
\] hence $\Xi\circ \Phi\succeq \Xi\circ \Psi$. Similarly, one can prove that $\Xi\circ \Psi\succeq \Upsilon\circ \Psi$, and Property~\eqref{eq_order_composition} follows by transitivity.

Now we prove Property~\eqref{eq_order_scalar}. Since $\Phi$ is a derivation, we have
\[
  \Phi \circ (t^{m}\nub^a\nuw^b\, \Psi)= t^{m}\nub^a\nuw^b\, \Phi\circ \Psi+\Phi(t^{m}\nub^a\nuw^b)  \Psi.
\]
The quantity $\Phi(t^{m}\nub^a\nuw^b)$ belongs to $\semiring$ but
is also an operator acting by multiplication, and as such we have $\Phi(t^{m}\nub^a\nuw^b)\succeq 0$. By Property~\eqref{eq_order_composition} we then have $\Phi(t^{m}\nub^a\nuw^b)\Psi\succeq 0$, and the proof follows.
\end{proof}

\begin{lemma}\label{lem_bounds_operators}
Let $\Lambda$ and $\OpLeft$ be the operators defined in~\eqref{Lambda-def} and~\eqref{OpLeft-def}, respectively. Then we have the following inequalities:
\begin{align}
&\Lambda^k\succeq 2^kt^{2k}\left(\sum_{j=0}^k \binom{k}{j} \nuw^{2j}\nub^{k-j}(D_t-D_\bullet)^j(D_t-D_\circ)^{k-j}\right)\succeq 0,\label{xa}\\
&t^2\nub\nuw\OpLeft\preceq\frac{4t^4}{3} \left(\nuw^3\nub+\nuw\nub^2+\nuw^2+\nub\right)(D_t+2)\circ D_t.\label{xb}
\end{align}
\end{lemma}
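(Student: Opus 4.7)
The plan is to prove~\eqref{xa} by induction on $k$, and~\eqref{xb} by isolating the square term in $\OpLeft$ and then doing a coefficient comparison on generators of $\semiring$. The main tools throughout are Lemma~\ref{lem_order_composition} and the concrete action of the various operators on a monomial generator $s^g\nub^a\nuw^b t^m$ with $m\geq a+b$.

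For~\eqref{xa}, the crucial step is the base case $k=1$, which follows from a direct computation:
\[
\Lambda(s^g\nub^a\nuw^b t^m) = 2s^g t^{m+2}\bigl[(m-a)\nub^a\nuw^{b+2} + (m-b)\nub^{a+1}\nuw^b + a\nub^{a-1}\nuw^{b+1} + b\nub^a\nuw^{b-1}\bigr].
\]
The first two monomials are precisely the action on the generator of $B:= 2t^2\bigl(\nuw^2(D_t-D_\bullet) + \nub(D_t-D_\circ)\bigr)$, and the last two contribute nonnegative terms that lie in $\semiring$, so $\Lambda\succeq B\succeq 0$. For the inductive step, let $A_k$ denote the claimed lower bound on $\Lambda^k$ in~\eqref{xa}; one checks easily that $A_k\succeq 0$ since $(D_t-D_\bullet)$ and $(D_t-D_\circ)$ preserve $\semiring$. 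Applying property~\eqref{eq_order_composition} of Lemma~\ref{lem_order_composition} twice, first using the induction hypothesis and then the base case, yields $\Lambda^{k+1}=\Lambda\circ\Lambda^k\succeq B\circ A_k$. I would then invoke property~\eqref{eq_order_scalar} to commute each first-order operator $(D_t-D_\bullet)$ and $(D_t-D_\circ)$ coming from $B$ past the monomial $t^{2k}\nuw^{2j}\nub^{k-j}$ sitting in the $j$-th summand of $A_k$; the required hypothesis $2k\geq 2j+(k-j)=k+j$ holds for $0\leq j\leq k$. Pascal's identity then glues the two resulting families of summands into $A_{k+1}$.

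For~\eqref{xb}, the key observation is that $\OpLeft$, as defined in~\eqref{OpLeft-def}, splits as a friendly first-order part minus the square of the single first-order operator $\Gamma:= t\nuw D_t + t(1-\nub\nuw)\partial_\bullet$. A direct computation gives
\[
\Gamma(s^g\nub^a\nuw^b t^m) = (m-a)\,s^g\nub^a\nuw^{b+1}t^{m+1} + a\,s^g\nub^{a-1}\nuw^b t^{m+1},
\]
whose coefficients are nonnegative (since $m-a\geq b\geq 0$) and whose monomials lie in $\semiring$, so $\Gamma\succeq 0$ and therefore $\Gamma^2\succeq 0$. Multiplying by the positive monomial $t^2\nub\nuw$, which preserves $\semiring$, gives the first reduction
\[
t^2\nub\nuw\,\OpLeft \;\preceq\; \tfrac{1}{3}\,t^2\nub\nuw\,(D_t+D_\circ-D_\bullet-1)\circ\Lambda.
\]
The remaining step is a coefficient comparison on generators: applying both this bound and the right-hand side of~\eqref{xb} to $s^g\nub^a\nuw^b t^m$ produces, on each side, a sum of four monomials of the shape $s^g t^{m+4}\nub^{*}\nuw^{*}$, and the required inequality reduces to four elementary coefficient estimates, each of the form $(m-a)(m+b-a+3)\leq 2m(m+2)$ or similar, all of which follow from $a,b\leq m$ and $m\geq a+b$. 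I expect the main obstacle to be purely organisational: carefully commuting operators past monomials via~\eqref{eq_order_scalar} while tracking membership in $\semiring$ at every stage, rather than anything substantive.
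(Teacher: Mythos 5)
Your proof is correct and follows the paper's structure closely: the base case $k=1$ and the inductive step for~\eqref{xa} (via Lemma~\ref{lem_order_composition}, property~\eqref{eq_order_scalar} and Pascal's identity), and the first reduction for~\eqref{xb} (dropping the nonnegative square $\bigl(t\nuw D_t+t(1-\nub\nuw)\partial_\bullet\bigr)^2$), are exactly what the paper does. The only variation is the final estimate in~\eqref{xb}, where you drop to a generator-by-generator coefficient comparison, whereas the paper stays at the operator level (bounding $D_t+D_\circ-D_\bullet-1\preceq 2D_t$ and $\Lambda\preceq 2t^2\bigl((\nuw^2+\nub)D_t+\nuw\partial_\bullet+\partial_\circ\bigr)$, composing, and then replacing $D_\bullet,D_\circ$ by $D_t$); both routes amount to the same four scalar inequalities.
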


\begin{proof}

Let us start with some observations:
\begin{itemize}
\item $D_t,D_\circ,D_\bullet\succeq 0$,
\item $D_t-D_\circ-D_\bullet\succeq 0$,
\item $t^2\nuw^2,t^2\nub,t^2\nub\nuw\succeq 0$,
\item $t^2\nuw\partial_\bullet, t^2\partial_\circ\succeq 0$.
\end{itemize}
  We first prove~\eqref{xa} for $k=1$. By definition~\eqref{Lambda-def} of $\OpRight$, and thanks to the observations above, we have
\begin{align}
 \OpRight&=2t^2\left((\nuw^2+\nub)D_{t}+ \nuw(1-\nub\nuw)\partial_{\bullet}+(1-\nub\nuw)\partial_{\circ}\right) \label{Lambda-def2}\\
 &=2t^2\left(\nuw^2(D_t-D_\bullet)+\nub(D_t-D_\circ)+ \nuw\partial_{\bullet}+\partial_{\circ}\right) \nonumber\\
 &\succeq2t^2\left(\nuw^2(D_t-D_\bullet)+\nub(D_t-D_\circ)\right) \succeq 0,\nonumber
\end{align}
which is precisely~\eqref{xa}. Let us now proceed by induction. Fix $k\geq 1$ and assume that~\eqref{xa} holds for $k$. Since $\OpRight\succeq 0$, thanks to Property~\eqref{eq_order_composition}, we immediately have $\OpRight^k\succeq 0$ and we can also write:
\begin{align*}
\OpRight^{k+1}&=\OpRight\circ\OpRight^{k}\\
&\succeq 2t^2\left(\nuw^2(D_t-D_\bullet)+\nub(D_t-D_\circ)\right)\circ\left( 2^kt^{2k}\left(\sum_{j=0}^k \binom{k}{j} \nuw^{2j}\nub^{k-j}(D_t-D_\bullet)^j(D_t-D_\circ)^{k-j}\right)\right)\\
&= \sum_{j=0}^k \binom{k}{j} 2t^2\left(\nuw^2(D_t-D_\bullet)+\nub(D_t-D_\circ)\right)\circ\left( 2^kt^{2k} \nuw^{2j}\nub^{k-j}(D_t-D_\bullet)^j(D_t-D_\circ)^{k-j}\right)\\
&\succeq 2^{k+1}t^{2(k+1)}\left(\sum_{j=0}^{k+1} \binom{k+1}{j} \nuw^{2j}\nub^{k+1-j}(D_t-D_\bullet)^j(D_t-D_\circ)^{k+1-j}\right),
\end{align*}
where in the last inequality we used Property~\eqref{eq_order_scalar} and the commutation of $(D_t-D_\bullet)$ and $(D_t-D_\circ)$.

Now, we turn to~\eqref{xb}. We examine the various terms in the expression~\eqref{OpLeft-def} of $\OpLeft$. It follows from~\eqref{Lambda-def2} that
\[
 0 \preceq  \OpRight\preceq 2t^2\left((\nuw^2+\nub)D_t+ \nuw\partial_{\bullet}+\partial_{\circ}\right).
\]
It is also direct that 
\[
  D_t+D_{\circ}-D_{\bullet}-1\preceq 2D_t
.\]
Hence, by Property~\eqref{eq_order_composition}, we have:
\begin{align}\label{ineq_om1}
t^2\nub\nuw\left(D_t+D_{\circ}-D_{\bullet}-1\right)\circ \OpRight&\preceq 2t^2\nub\nuw D_t\left(2t^2\left((\nuw^2+\nub)D_t+ \nuw\partial_{\bullet}+\partial_{\circ}\right)\right) \notag \\
&={2} t^2D_t \left(2t^2\left((\nuw^3\nub+\nub^2\nuw)D_t+ \nuw^2D_\bullet+\nub D_\circ\right)\right) \notag \\
&\preceq {2} t^2D_t \left(2t^2(\nuw^3\nub+\nub^2\nuw+\nuw^2+\nub)D_t\right) \notag \\
                                                                 &= 4t^4(\nuw^3\nub+\nub^2\nuw+\nuw^2+\nub) (D_t+2)\circ D_t.
\end{align}
Finally, one can check that $t \nuw D_t + t (1-\nub \nuw)\partial _{\bullet}\succeq 0$, hence by Property~\eqref{eq_order_composition}:
\beq\label{ineq_om2}
t^2\nub\nuw\big( t \nuw D_t + t (1-\nub \nuw)\partial _{\bullet}\big)^2\succeq 0.
\eeq
Combining inequalities~\eqref{ineq_om1} and~\eqref{ineq_om2} in~\eqref{OpLeft-def} yields~\eqref{xb}.
\end{proof}

The following lemma is a rather direct consequence of Lemma~\ref{lem_bounds_operators}.

\begin{lemma}\label{lem_final_ineq}
Given $x,y>0$, let $\Xi$ be the operator defined on series of $\semiring$ that sets $\nub$ to $x$ and $\nuw$ to~$y$. Then, for $F\in\semiring$, the following inequalities hold coefficientwise in $s$ and $t$:
\beq\label{eq_xi}
\Xi(\OpRight^k F)\geq 2^{k}t^{2k}\min(y^2,x)^k D_t^{k}(\Xi F) ,
\eeq
\beq\label{eq_xi_2}
\Xi(\OpLeft F) \leq \frac{ 4 t^2}{3} \left(y^2+x+\frac{y}{x}+\frac{1}{y}\right)(D_t+2)(D_t(\Xi F)).
\eeq 
\end{lemma}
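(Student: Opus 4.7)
The plan is to apply the evaluation morphism $\Xi$ to the operator inequalities of Lemma~\ref{lem_bounds_operators}. Since $x,y>0$, the map $\Xi$ sends any series in $\semiring$ to a series in $s,t$ with nonnegative coefficients, and it commutes with $D_t$ because it only substitutes for $\nub$ and $\nuw$. In each case the result then reduces to a short term-by-term comparison.

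For~\eqref{eq_xi}, inequality~\eqref{xa} yields, after applying $\Xi$ and extracting coefficients in $s,t$,
\[
\Xi(\OpRight^k F) \;\geq\; 2^k t^{2k}\sum_{j=0}^k\binom{k}{j} y^{2j}x^{k-j}\,\Xi\bigl((D_t-D_\bullet)^j(D_t-D_\circ)^{k-j} F\bigr).
\]
Writing $F=\sum_{m,a,b,g} c_{m,a,b,g}\,s^g\nub^a\nuw^b t^m$ with $c_{m,a,b,g}\geq 0$ and $m\geq a+b$, the operator $(D_t-D_\bullet)^j(D_t-D_\circ)^{k-j}$ acts on each monomial by the scalar $(m-a)^j(m-b)^{k-j}$, and the binomial formula collapses the sum into
\[
\Xi(\OpRight^k F)\;\geq\; 2^k t^{2k}\sum_{m,a,b,g}\bigl(y^2(m-a)+x(m-b)\bigr)^k c_{m,a,b,g}\,x^a y^b s^g t^m.
\]
The elementary bound $y^2(m-a)+x(m-b)\geq \min(y^2,x)((m-a)+(m-b))\geq \min(y^2,x)\,m$, where the second inequality uses $m\geq a+b$, raised to the $k$-th power gives~\eqref{eq_xi}, since the remaining sum is exactly $D_t^k(\Xi F)$.

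For~\eqref{eq_xi_2}, inequality~\eqref{xb} gives, after applying $\Xi$ and using $\Xi\circ D_t = D_t\circ\Xi$, the coefficientwise inequality
\[
t^2 xy\,\Xi(\OpLeft F)\;\leq\; \tfrac{4 t^4}{3}(y^3x+yx^2+y^2+x)(D_t+2)D_t\,\Xi F.
\]
Both sides share the factor $t^2$, so dividing by the positive monomial $t^2 xy$ is legitimate at the level of coefficients; factoring $xy$ out of the polynomial rewrites the prefactor as $y^2+x+y/x+1/y$, which is~\eqref{eq_xi_2}.

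The only real subtlety is in the first inequality: because $\Xi$ does not commute with $D_\bullet$ or $D_\circ$, one cannot bound $\Xi((D_t-D_\bullet)^j(D_t-D_\circ)^{k-j} F)$ by $D_t^k \Xi F$ term by term. The precise binomial combination appearing in~\eqref{xa} is exactly what is needed so that the sum, once evaluated at $\nub=x$ and $\nuw=y$, collapses monomial by monomial to the $k$-th power of a single linear form in $m-a$ and $m-b$, which the defining property $m\geq a+b$ of $\semiring$ then dominates by $\min(y^2,x)\,m$.
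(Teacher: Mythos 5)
Your proof is correct, and for~\eqref{eq_xi} it takes a slightly different route than the paper's: where the paper stays at the operator level (first replacing each scalar $\nuw^{2j}\nub^{k-j}$ by $\min(\nuw^2,\nub)^k$, then collapsing the binomial sum of operators to $(2D_t-D_\circ-D_\bullet)^k$, and finally invoking $D_t\succeq D_\circ+D_\bullet$ together with Lemma~\ref{lem_order_composition}), you descend to the monomial level, turn every differential operator into a scalar, apply the binomial theorem with the $y^{2j}x^{k-j}$ factors \emph{folded in} so the whole sum becomes $\bigl(y^2(m-a)+x(m-b)\bigr)^k$, and then conclude with the single elementary bound $y^2(m-a)+x(m-b)\geq\min(y^2,x)\,m$ for $m\geq a+b$. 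The two orderings are logically equivalent — your monomial inequality is exactly the scalar shadow of the paper's chain $\min(y^2,x)^k(2D_t-D_\circ-D_\bullet)^k\succeq\min(y^2,x)^kD_t^k$ — but yours sidesteps the extra appeal to Lemma~\ref{lem_order_composition} and makes the termwise nonnegativity of the binomial summands automatic rather than a separate check. Your treatment of~\eqref{eq_xi_2} (substitute, observe both sides carry a factor $t^2$, divide by $t^2xy$) matches the paper's ``straightforward'' derivation, and your closing remark correctly identifies why a naive termwise bound of $\Xi\bigl((D_t-D_\bullet)^j(D_t-D_\circ)^{k-j}F\bigr)$ by $D_t^k\Xi F$ must fail: those coefficients are actually \emph{smaller} than those of $D_t^k\Xi F$, and only the binomial combination with the $y^{2j}x^{k-j}$ weights restores the bound.
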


\begin{proof}
We first observe that if $\Phi \succeq \Psi$ and $F\in \semiring$, then $\Xi\Phi(F) \ge \Xi\Psi(F)$ coefficientwise in $s$ and~$t$. Hence by~\eqref{xa}, the following inequalities hold coefficientwise in $s$ and $t$:
\begin{align*}
\Xi(\OpRight^k F)&\geq \Xi\left(2^{k}t^{2k}\left(\sum_{j=0}^{k} \binom{k}{j} \nuw^{2j}\nub^{k-j}(D_t-D_\bullet)^j(D_t-D_\circ)^{k-j}\right)F\right)\\
&=2^{k}t^{2k}\sum_{j=0}^{k} \binom{k}{j} (y^{2j}x^{k-j})\, \Xi\left((D_t-D_\bullet)^j(D_t-D_\circ)^{k-j}F\right)\\
&\geq 2^{k}t^{2k}\min(y^2,x)^k\sum_{j=0}^{k} \binom{k}{j}\, \Xi\left((D_t-D_\bullet)^j(D_t-D_\circ)^{k-j}F\right)\\
&=2^{k}t^{2k}\min(y^2,x)^k\, \, \Xi\left((2D_t-D_\circ-D_\bullet)^k F\right),
\end{align*}
because the operators $(D_t-D_\bullet)$ and $(D_t-D_\circ)$ commute. We then conclude the proof of~\eqref{eq_xi} using $D_t\succeq D_\circ+D_\bullet$ and Lemma~\ref{lem_order_composition}.

The inequality~\eqref{eq_xi_2} follows  from~\eqref{xb} in a straightforward way.
\end{proof}

We are now ready to prove Corollary~\ref{thm_bounds}.

\begin{proof}[Proof of Corollary~\ref{thm_bounds}]
  Let $\nub, \nuw>0$. First, note that the \gf\ of rooted unlabeled Ising cubic maps is $\vec I:=\sum_{n,g} \vec I_{n,g}(\nub,\nuw) t^{3n} s^g=2D_t I$.
    Also, all monomials in  $I$ are of the form $t^{3n}\nub^a\nuw^bs^g$ with $3n\geq a+b$ (because a map contributing this monomial has $3n$ edges, $a+b$ of which are  monochromatic). Hence, $I\in\semiring$ and we can apply Lemma~\ref{lem_final_ineq} to obtain
\begin{align}
  &\Lambda^kI\geq 2^{k}t^{2k}\min(\nuw^2,\nub)^k D_t^{k}I
    = 2^{k-1}t^{2k}\min(\nuw^2,\nub)^k D_t^{k-1}\vec I,\label{ineq_r1}\\
  &\OpLeft I \leq
    \frac{  4t^2}{3} \left(\nuw^2+\nub+\frac{\nuw}{\nub}+\frac{1}{\nuw}\right)(D_t+2) (D_t I) =
    \frac{
    2t^2}{3} \left(\nuw^2+\nub+\frac{\nuw}{\nub}+\frac{1}{\nuw}\right)(D_t+2)\vec I,\label{ineq_r2}
\end{align}
where it is implied (here and in the rest of the proof) that the inequalities hold for $\nub,\nuw>0$, coefficientwise in $s$ and $t$.

Moreover, since the last two terms of the RHS in the PDE~\eqref{eq_KP_Ising} satisfied by $I$ have nonnegative coefficients, we have the following inequality:
\[
  \OpLeft I\geq   \frac s {12}\Lambda ^4I
  + \frac 1 2  (\Lambda^2 I)^2.
\]
Combined with \eqref{ineq_r1} and~\eqref{ineq_r2}, it yields
\[
  \left(\nuw^2+\nub+\frac{\nuw}{\nub}+\frac{1}{\nuw}\right)(D_t+2)\vec I\geq t^6 \min(\nuw^2,\nub)^4\left( s D_t^3\vec I+3 \left(D_t \vec I\right)^2 \right).
\]
Now, extracting the coefficient of $t^{3n}s^g$ above yields
\[
  \left(\nuw^2+\nub+\frac{\nuw}{\nub}+\frac{1}{\nuw}\right)(3n+2)\vec I_{n,g}\geq
  27\min(\nuw^2,\nub)^4\left((n-2)^3\vec I_{n-2,g-1}+\sum_{i+j=n-2\atop h+k=g}i\vec I_{i,h}(\nub,\nuw)j\vec I_{j,k}(\nub,\nuw)\right),
\]
hence for $n\ge 2$,
\[
  n\vec I_{n,g}\geq \frac{27(n-2)^3n}{
    (3n+2)n^3}\frac{\min(\nuw^2,\nub)^4}{\left(\nuw^2+\nub+\frac{\nuw}{\nub}+\frac{1}{\nuw}\right)}\left(n^3\vec I_{n-2,g-1}+\sum_{i+j=n-2\atop h+k=g}i\vec I_{i,h}(\nub,\nuw)j\vec I_{j,k}(\nub,\nuw)\right),
\]
and since $\frac{27(n-2)^3}{(3n+2)n^2}\geq 1$ for $n\geq  5$,  Corollary~\ref{thm_bounds} follows.
\end{proof}

\section{Rooted unlabeled Ising maps, and a verification in genus $0$}
\label{sec:rooted}

Since many results on Ising maps deal with rooted, rather than labeled, maps,  we now derive from the PDE~\eqref{eq_KP_Ising} satisfied by the series $I$ another PDE satisfied by the \gf \ of \emm rooted, Ising cubic maps.
Then, we check this new equation  (or more precisely a smaller equation derived similarly from the $s=0$ version of~\eqref{eq_KP_Ising}) in the planar case, starting from the known rational parametrization of the rooted planar Ising series.

\subsection{A {partial} differential equation for rooted maps}
\label{sec:rootedPDE}

We first get rid of the periodicity in $t$ by introducing $\widetilde{I}(t,\nub,\nuw,s):=I(t^{1/3},\nub,\nuw,s)$, where we have made all variables explicit. Then, the \gf \ of rooted unlabeled Ising cubic maps counted by the number of edges divided by $3$,  the numbers of black and white monochromatic edges and the genus) is $\vec{I}:=6D_t\widetilde{I}$. The notation $\vec I$ was used in the previous section to denote the same series,
with $t$ replaced by $t^3$. We hope that this will not cause any confusion.

\begin{prop}
  The \gf\ $\vec{I}(t,\nub,\nuw,s)$ of rooted unlabeled Ising cubic maps satisfies an explicit partial differential equation in the variables $t, \nub, \nuw$, of order $7$ and degree $3$, with coefficients in $\qs[t,\nub,\nuw,s]$.
\label{prop: PDE rooted}
\end{prop}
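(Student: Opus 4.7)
The plan is to translate the PDE~\eqref{eq_KP_Ising} for $I$ into a PDE for the rooted series $\vec I$ via the intermediate $\widetilde I$. First I would apply the substitution $t\mapsto t^{1/3}$ to~\eqref{eq_KP_Ising}: the operators $\partial_\bullet$ and $\partial_\circ$ are unchanged, while $D_t$ acting on $I$ becomes $3D_t$ acting on $\widetilde I$, since $D_t\widetilde I(t)=\tfrac13(D_tI)(t^{1/3})$. The polynomial coefficients of the equation acquire powers of $t^{1/3}$, which are cleared by multiplying through by a suitable positive power of $t$. This produces a polynomial PDE for $\widetilde I$, of order $4$ and degree $2$, with coefficients in $\qs[t,\nub,\nuw,s]$. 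Crucially, every non-source term of this PDE involves at least one partial derivative of $\widetilde I$, because the operators $\Omega$ and $\Lambda$ in~\eqref{eq_KP_Ising} contain no zeroth-order part.

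Next, I would rewrite this equation in terms of $\vec I=6D_t\widetilde I$. The basic identity
\[
D_t\,\partial^\alpha\widetilde I=\partial^\alpha D_t\widetilde I=\tfrac{1}{6}\,\partial^\alpha\vec I
\]
converts any derivative of $\widetilde I$ of the form $D_t\partial^\alpha\widetilde I$ directly into a derivative of $\vec I$. The obstacle is the occurrence of \emph{pure} derivatives $\partial_\bullet^a\partial_\circ^b\widetilde I$, which contain no $D_t$ factor and are not polynomial differential expressions in $\vec I$: they are genuine antiderivatives in the variable~$t$. To eliminate them I would apply $D_t$ to the PDE several times, obtaining a finite system of polynomial relations in the derivatives of $\vec I$ together with the finite list of pure $\widetilde I$-derivatives, viewed as algebraic unknowns. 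A resultant-style elimination on this system then produces a polynomial PDE in $\vec I$ alone, whose coefficients remain in $\qs[t,\nub,\nuw,s]$ because every step --- the substitution, the $D_t$-derivations and the algebraic elimination --- preserves polynomiality. A careful count of the number of unknowns to eliminate and of the required $D_t$-iterates should yield the announced bounds: order $7$ (from three extra applications of $D_t$ on a degree-$2$ order-$4$ PDE) and degree $3$ (one extra factor coming from multiplying the original quadratic equation by a degree-$1$ equation during elimination).

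The main difficulty lies in the quadratic term $\tfrac12(\Lambda^2\widetilde I)^2$ of~\eqref{eq_KP_Ising}: each application of $D_t$ to it produces, by Leibniz, cross products in which only one of the two factors acquires an extra $D_t$, so pure $\widetilde I$-derivatives re-emerge at every stage. Choosing the right $\qs[t,\nub,\nuw,s]$-linear combinations of the successive $D_t$-iterates of the PDE so as to cancel all these remnants, while keeping the order and degree of the final equation at exactly $(7,3)$, is the delicate part of the argument; it would naturally be carried out with computer algebra, in the same spirit as the implementation described in Section~\ref{sec:implement}.
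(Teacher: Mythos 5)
Your high-level plan matches the paper's: substitute $t\mapsto t^{1/3}$ to get a PDE for $\widetilde I$, differentiate it in $t$ several times, then eliminate the quantities that block re-expression in $\partial_t\widetilde I$. But two of your specifics do not hold up. First, the obstructing quantities are not the pure derivatives $\partial_\bullet^a\partial_\circ^b\widetilde I$. Since $\partial_\bullet$ and $\partial_\circ$ commute with $\partial_t$ --- the paper's ``almost commuting'' observation --- a single $t$-derivative already turns every linear term $\mathcal O\widetilde I$, with $\mathcal O$ built from $D_t,\partial_\bullet,\partial_\circ$ and multiplications by $\nub,\nuw,s$, into a linear expression in $\partial_t\widetilde I$. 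What actually obstructs is (a) the explicit multiplications by $t$, which need three derivatives to resolve since $\partial_t^3(t^2F)$ contains no pure $F$ term, and (b) the quadratic term $\tfrac12 t^2(\widetilde\Lambda_2\widetilde I)^2$, which under Leibniz always retains one undifferentiated factor $\widetilde\Lambda_2\widetilde I$; you do correctly spot this second point.

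Second, and this is the step that would fail: you propose to cancel the surviving remnants by ``$\qs[t,\nub,\nuw,s]$-linear combinations of the successive $D_t$-iterates.'' After the third and fourth derivatives the two relations take the form $\mathcal O_k(\partial_t\widetilde I)=\widetilde\Lambda_2\widetilde I\cdot\widetilde{\mathcal O}_k(\partial_t\widetilde I)$ for $k=3,4$, where the cofactors $\widetilde{\mathcal O}_k(\partial_t\widetilde I)$ are themselves differential polynomials in $\partial_t\widetilde I$, not scalars in $\qs[t,\nub,\nuw,s]$. No combination with scalar coefficients can cancel both remnants at once; one must cross-multiply, $\mathcal O_3\cdot\widetilde{\mathcal O}_4=\mathcal O_4\cdot\widetilde{\mathcal O}_3$, which is the resultant-type operation you mention at one point and which is precisely what produces the jump to degree~$3$ (the $\mathcal O_k$ have degree $2$, the $\widetilde{\mathcal O}_k$ degree $1$). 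Finally, your derivative count is off by one: the paper uses the fourth $t$-derivative, which has order $8$ in $\widetilde I$ and hence order $7$ once rewritten in $\partial_t\widetilde I$; three derivatives would only give order~$6$.
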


We do not write down explicitly this equation as it is quite large, but it is available in the {\sc Maple} companion file. The following proof explains how to construct it.

\begin{proof}
  We start from the PDE
  \eqref{eq_KP_Ising} satisfied by $I$:
\[
    \OpLeft I=   \frac s {12}\Lambda ^4I
    + \frac 1 2  (\Lambda^2 I)^2
    + t\left( \nuw+2t^3\left(2\nuw^4 + \nub\nuw^2 + 2\nub^2 + 3\nuw\right)\right)\Lambda^2 I
    +t^5 Q.
\]
When we replace the operators $\OpLeft$ and $\Lambda$ by~\eqref{OpLeft-def} and~\eqref{Lambda-def}, respectively, we observe that  a global factor $t^2$ comes out. We divide the PDE by this factor, replace $I(t, \nub,\nuw,s)$ by $\widetilde I(t^3, \nub, \nuw,s)$ and apply repeatedly the following identity:  if $J(t)=\widetilde J(t^3)$, for some function $\widetilde J(\tau)$, then
 \[
 D_t\left(t^k\,  J(t)\right)=\tau^{k/3}(k \widetilde J(\tau)+3\tau\partial_{\tau}\widetilde J(\tau)),
\]
to obtain an equation satisfied by $\widetilde{I}= \widetilde I(t, \nub,\nuw,s)$:
\beq
\label{eq: KP Ising aperiodic}
    \widetilde{\OpLeft} \widetilde{I}=   \frac s {12}t^2\widetilde{\Lambda} _4\widetilde{I}
    + \frac 1 2  t^2\left(\widetilde{\Lambda}_2 \widetilde{I}\right)^2
    + t\left( \nuw+2t\left(2\nuw^4 + \nub\nuw^2 + 2\nub^2 + 3\nuw\right)\right)\widetilde{\Lambda}_2 \widetilde{I}
    + t \widetilde{Q},
\eeq
with 
\allowdisplaybreaks
\begin{align*}
  \widetilde{\OpRight}&=2\left(3(\nuw^2+\nub)D_t+ \nuw(1-\nub\nuw)\partial_\bullet+(1-\nub\nuw)\partial_\circ\right),
  \\
  \widetilde{\OpLeft}&=\frac{1}{3}\left(3D_t+D_\circ-D_\bullet+1\right)\circ\widetilde{\OpRight}
                       - \left( \nuw (3D_t+1) +  (1-\nub \nuw)\partial_\bullet\right)\circ\left( 3 \nuw D_t +  (1-\nub \nuw)\partial_\bullet\right),
  \\
\widetilde{\OpRight}_2&=\left( \widetilde{\OpRight}+4(\nuw^2+\nub)\right)\circ\widetilde{\OpRight} ,
  \\
\widetilde{\OpRight}_4&=\left(\widetilde{\OpRight}+12(\nuw^2+\nub)\right)\circ\left(\widetilde{\OpRight}+8(\nuw^2+\nub)\right)\circ\left(\widetilde{\OpRight}+4(\nuw^2+\nub)\right)\circ\widetilde{\OpRight},
\end{align*}
and 
\begin{multline*}
 \widetilde{Q}=
  2\nuw \left(2  \nuw^{4}+  \nub  \nuw^{2}+2  \nub^{2}  +3  \nuw\right)
    +\left( \nuw^{5}+2  \nuw^{2}+ \nub\right) s  + 2\left(2\nuw^4 + \nub\nuw^2 + 2\nub^2 + 3\nuw\right)^2t
    \\
    +2\left(16  \nuw^{8}+5  \nuw^{6}  \nub+10  \nub^{2}  \nuw^{4}+16  \nub^{3}  \nuw^{2}+59  \nuw^{5}+16  \nub^{4}+54  \nub  \nuw^{3}+37  \nub^{2}  \nuw+32  \nuw^{2}+11  \nub\right) ts.
\end{multline*}

Unfortunately, \eqref{eq: KP Ising aperiodic} is \emm not, a PDE for the series  $\vec{I}:= 6 D_t \widetilde I$,
because some of the terms it contains do not involve any $t$-derivative. To remedy this, we will combine the first few $t$-derivatives of~\eqref{eq: KP Ising aperiodic} to eliminate the terms with no $t$-derivative. (Note
that as soon as we obtain a PDE for $\partial_t \widetilde I$ we are done, since we can replace $\partial_t \widetilde I$ by $\vec I/(6t)$ to obtain a PDE for~$\vec I$.)
We thus need to examine the action of $\partial_t$ on the operators involved in~\eqref{eq: KP Ising aperiodic}. We say that a linear operator $\mathcal O$ is \emm almost commuting, (or AC) if there exists another linear operator $\widehat{\mathcal O}$ such that $\partial_t \circ \mathcal O= \widehat{\mathcal O} \circ \partial_t$. For instance, the operators $\partial_\bullet$ and $\partial_\circ$ are AC, as well as any operator of the form $\mathcal O \circ \partial_t$.
The multiplication by a polynomial \emm not involving~$t$, is also AC. The composition of two AC operators is still AC. These properties imply in particular that the above operators  $\widetilde{\Lambda}$,  $\widetilde{\OpLeft}$, $\widetilde{\Lambda}_2$ and $\widetilde{\Lambda}_4$ are AC. For instance,
\[
\partial_t\circ\widetilde{\Lambda}=\widehat{\Lambda}\circ \partial_t
\]
with
\[
\widehat{\Lambda}=2\left(3(\nuw^2+\nub)(D_t+1)+ \nuw(1-\nub\nuw)\partial_\bullet+(1-\nub\nuw)\partial_\circ\right).
\]
Thus, several terms in the first derivative of~\eqref{eq: KP Ising aperiodic} can already be written as differential operators acting on $\partial_t \widetilde I$.
There are two sources of terms that cannot be written in this way:
\begin{itemize}
\item multiplication by $t$ is not AC, so that problems arise from the term $t^2\widetilde \Lambda_4 \widetilde I$ for instance,
\item the quadratic term $t^2(\widetilde{\Lambda}_2 \widetilde{I})^2$ would be a problem even without its factor $t^2$, because $\partial_t (F^2)=2 F \partial_t F$ still involves the non-differentiated function $F$.
\end{itemize}
The first problem will be solved by taking higher order derivatives: for instance, $\partial_t^3 (t^2 F)$ does not contain non-differentiated terms.
Likewise, with high order derivatives the quadratic term yields terms that are only linear in $\widetilde{\Lambda}_2 \widetilde{I}$ (in the same way that $\partial_t^k (F^2)$ is only linear in $F$ for $k\ge1$). We can then eliminate $\widetilde{\Lambda}_2 \widetilde{I}$ between two successive derivatives of~\eqref{eq: KP Ising aperiodic} to obtain a PDE on $\partial_t \widetilde I$.

Let us go through this strategy in more detail. Let us differentiate~\eqref{eq: KP Ising aperiodic} a first time with respect to $t$:
\begin{align}
\partial_t\widetilde{\OpLeft} \widetilde{I}&= \frac s {12}\partial_t(t^2\widetilde{\Lambda}_4\widetilde{I})     + \frac 1 2 \partial_t\left( t^2(\widetilde{\Lambda}_2 \widetilde{I})^2\right) \notag
                                            + \left( \nuw+ 4 t\left(2\nuw^4 + \nub\nuw^2 + 2\nub^2 + 3\nuw\right)\right)\widetilde{\Lambda}_2 \widetilde{I}
                                             \notag\\
&\ \  + t\left( \nuw+2t\left(2\nuw^4 + \nub\nuw^2 + 2\nub^2 + 3\nuw\right)\right)\partial_t(\widetilde{\Lambda}_2 \widetilde{I})     + \widetilde{Q}_1,
\label{eq: first deriv raw}
\end{align}
for some polynomial $\widetilde Q_1$ of degree $1$ in $t$. Since  $\widetilde{\OpLeft}$, $\widetilde{\Lambda}$, $\widetilde{\Lambda}_2$ and $\widetilde{\Lambda}_4$ are AC,~\eqref{eq: first deriv raw} can be rewritten as follows:
\[
  \mathcal{O}_1\left(\partial_t \widetilde I\right)
  =J_1  + \widetilde{Q}_1,
\]
where $\mathcal{O}_1$ is a differential operator and 
\begin{align*}
  J_1 
  &=\frac{s}{6}t\widetilde{\Lambda}_4\widetilde{I}+t\left(\widetilde{\Lambda}_2 \widetilde{I}\right)^2+ t^2\widetilde{\Lambda}_2 \widetilde{I} \cdot
  \partial_{t}\left(\widetilde{\Lambda}_2 \widetilde{I}\right)
   + \left( \nuw+ 4 t\left(2\nuw^4 + \nub\nuw^2 + 2\nub^2 + 3\nuw\right)\right)\widetilde{\Lambda}_2 \widetilde{I}
  \\
  &=   \frac{s}{6}t\widetilde{\Lambda}_4\widetilde{I}+t\left(\widetilde{\Lambda}_2 \widetilde{I}\right)^2
    + t^2 \widetilde{\Lambda}_2  \widetilde{I}\cdot\widehat{\Lambda}_2 \left(\partial_t\widetilde{I}\right)
    + \left( \nuw+ 4 t\left(2\nuw^4 + \nub\nuw^2 + 2\nub^2 + 3\nuw\right)\right)\widetilde{\Lambda}_2 \widetilde{I},
\end{align*}
where $\widehat{\Lambda}_2 $ is defined by $\partial_t \circ \widetilde\Lambda_2= \widehat \Lambda_2\circ \partial_t$.
Differentiating a second time with respect to $t$, we obtain an equation of the form
\[
  \mathcal{O}_2 \left(\partial_t \widetilde I\right)
  = J_2  + \widetilde{Q}_0,
\]
for some polynomial $\widetilde{Q}_0$ of degree $0$ in $t$, with 
\[
 J_2
  =\frac{s}{6}\widetilde{\Lambda}_4\widetilde{I}
  +\left(\widetilde{\Lambda}_2 \widetilde{I}\right)^2
  + 4 t \widetilde{\Lambda}_2 \widetilde{I} \cdot \widehat{\Lambda}_2 \left(\partial _t \widetilde{I}\right)
  + t^2 \widetilde{\Lambda}_2  \widetilde{I}\cdot\partial_t\widehat{\Lambda}_2 \left(\partial_t\widetilde{I}\right)
    +  4 \left(2\nuw^4 + \nub\nuw^2 + 2\nub^2 + 3\nuw\right)\widetilde{\Lambda}_2 \widetilde{I}.
\]
Therefore, for the third derivative with respect to $t$, we obtain an equation of the form
\beq\label{order3}
  \mathcal{O}_3\left(\partial_t \widetilde I\right)
=  \widetilde{\Lambda}_2 \widetilde{I} \cdot \widetilde{\mathcal{O}}_3\left(\partial_t \widetilde I\right) ,
\eeq
\emm i.e., the only terms that cannot be expressed in terms of $\partial_t \widetilde I$ are linear in $\widetilde{\Lambda}_2 \widetilde{I}$. Thus, the fourth derivative has a similar form:
\beq\label{order4}
  \mathcal{O}_4\left(\partial_t \widetilde I\right)
=  \widetilde{\Lambda}_2 \widetilde{I} \cdot \widetilde{\mathcal{O}}_4\left(\partial_t \widetilde I\right).
\eeq
At this point, it suffices to take a linear combination of the last two equations to eliminate $\widetilde{\Lambda}_2 \widetilde{I}$ and obtain a PDE on $\partial _t \widetilde I$, namely
\[
\mathcal{O}_3\left(\partial_t \widetilde I\right)
     \cdot  \widetilde{\mathcal{O}}_4\left(\partial_t \widetilde I\right) =
        \mathcal{O}_4\left(\partial_t \widetilde I\right)
    \cdot  \widetilde{\mathcal{O}}_3\left(\partial_t \widetilde I\right)
    .
\]
We refer the reader to the {\sc Maple} companion file for the explicit computation.

The order and degree of the PDE can be predicted without going through all the computations. Indeed, we take the fourth derivative of the PDE~\eqref{eq: KP Ising aperiodic}, which has order $4$ in $\widetilde{I}$, and then express it in terms of $\partial_t \widetilde I$, which yields an order 7. As for the degree, the initial PDE is quadratic, and so are all its derivatives. In particular, the terms $\widetilde{\mathcal{O}}_3\left(\partial_t \widetilde I\right)$ and $\widetilde{\mathcal{O}}_4\left(\partial_t \widetilde I\right)$ in~\eqref{order3} and~\eqref{order4} have degree $1$. The final elimination thus yields an equation of degree $3$.
\end{proof}

\subsection{Checking  the planar case}
\label{sec:planar-check}

The planar rooted Ising series  $\vec{P}:=\vec{I}_{|s=0}$ is known explicitly in rational parametric form, see Proposition~\ref{prop: rat param} below. It is thus natural to check the PDE that we have obtained above on this case.  In fact, when $s=0$ the tricky term $t^2\widetilde\Lambda_4 \widetilde I$ disappears from~\eqref{eq: KP Ising aperiodic}, which allows us to derive a PDE of order $3$ only in the planar case. This is the PDE that we actually check.

Let us first explain how we establish this PDE. We return to the proof of Proposition~\ref{prop: PDE rooted}, when $s$ is specialized to 0. We write $\widetilde P:= \left. \widetilde I\right|_{s=0}$, but otherwise re-use the notation of this proof, even if some terms have changed because $s=0$. The first derivative of~\eqref{eq: KP Ising aperiodic} can once again be written as
\beq\label{eq: first deriv Ising planar}
  \mathcal{O}_1\left(\partial_t \widetilde P\right)
  =J_1   + \widetilde{Q}_1,
\eeq
where now 
\[
  J_1 
 =  t\left(\widetilde{\Lambda}_2 \widetilde{P}\right)^2
  + t^2 \widetilde{\Lambda}_2  \widetilde{P}\cdot{\widehat{\Lambda}_2 }\left(\partial_t\widetilde{P}\right)    + \left( \nuw+ 4 t\left(2\nuw^4 + \nub\nuw^2 + 2\nub^2 + 3\nuw\right)\right)\widetilde{\Lambda}_2\widetilde{P}.
\]
Thus,~\eqref{eq: first deriv Ising planar} is already a polynomial of degree $2$ in $\widetilde{\Lambda}_2 \widetilde{P}$, whose coefficients are differential polynomials in $\partial_t\widetilde{P}$. Then, the second derivative can be written as
\beq\label{eq: second deriv Ising planar}
  \mathcal{O}_2\left(\partial_t \widetilde P\right)
  =J_2   + \widetilde{Q}_2,
  \eeq
  where
  \[
J_2
   =\left(\widetilde{\Lambda}_2 \widetilde{P}\right)^2
  + 4t \widetilde{\Lambda}_2 \widetilde{P} \cdot \widehat{\Lambda}_2 \left(\partial _t \widetilde{P}\right)
  + t^2\widetilde{\Lambda}_2  \widetilde{P}\cdot\partial_t\widehat{\Lambda}_2 \left(\partial_t\widetilde{P}\right)
     +  4 \left(2\nuw^4 + \nub\nuw^2 + 2\nub^2 + 3\nuw\right)\widetilde{\Lambda}_2 \widetilde{P}.
\]
So~\eqref{eq: second deriv Ising planar} is also a polynomial of degree 2 in $\widetilde{\Lambda}_2 \widetilde{P}$. We can now eliminate $\widetilde{\Lambda}_2 \widetilde{P}$ between~\eqref{eq: first deriv Ising planar} and~\eqref{eq: second deriv Ising planar} by computing a resultant. We thus obtain a PDE for $\partial_t \widetilde P=  \vec{P}/(6t)$,
and finally for $\vec P$ itself. It has order 3 only, but degree 4. This should be compared to the case $s=0$ of the PDE of Proposition~\ref{prop: PDE rooted}, which has order 5 and degree 3.

\medskip
Let us now give an explicit  expression of $\vec P$, in rational parametric form.

\begin{prop}
  The series $\vec{P}$ is algebraic over $\qs(t,\nub,\nuw)$, and the algebraic variety $\{(t,\nub,\nuw,\vec{P}(t,\nub,\nuw))\}$ admits the following rational parametrization by a triple of  variables $(\Ab,\Aw,G)$:
\begin{align*}
  \nuw&=-\frac{\Aw^2-2G(\Ab+\Aw^2)+8\Ab G^2-\Ab^2\Aw}{4G^2-\Ab\Aw}, \qquad
                \nub=-\frac{\Ab^2-2G(\Ab^2+\Aw)+8\Aw G^2-\Ab\Aw^2}{4G^2-\Ab\Aw},\\
  t(1-\nub\nuw)^3&=:T=-\frac{G}{(4G^2-\Ab\Aw)^2}\cdot\Big(32G^5 - 16G^4 - 16G^3\Ab \Aw
                   + 4(\Ab^2\Aw^2 + \Ab^3 + {\color{red}4}\Aw^3 + 3\Ab\Aw)G^2 \\
&\ \ - 2(\Ab^4\Aw + \Ab\Aw^4 + 3\Ab^2\Aw^2 + \Ab^3 + \Aw^3)G + \Ab\Aw(\Ab^2\Aw^2 + \Ab^3 + \Aw^3)\Big),
\end{align*} 
and
\[
\vec{P}=\frac W {T^2},
\]
where
\begin{align*}     
W&
= \frac{G}{(\Ab \Aw - 4  G^2)^3}\cdot\Big(384 G^9 - 128 G^8 + 32(16\Ab^3 + 16\Aw^3 + 15\Ab\Aw) G^7  \\
-&32(3\Ab^2\Aw^2 + 5\Ab^3 + 5\Aw^3) G^6 + 8(8\Ab^6 + 8\Aw^6 - 30\Ab^4\Aw - 30\Ab\Aw^4 - 39\Ab^2\Aw^2 + 2\Ab^3 + 2\Aw^3) G^5 \\
-& 8(4\Ab^5\Aw^2 + 4\Ab^2\Aw^5 + 4\Ab^6 - 4\Ab^3\Aw^3 + 4\Aw^6 - 14\Ab^4\Aw - 14\Ab\Aw^4 - 9\Ab^2\Aw^2) G^4 \\
-& 2(16\Ab^7\Aw + 8\Ab^4\Aw^4 + 16\Ab\Aw^7 + 10\Ab^5\Aw^2 + 10\Ab^2\Aw^5 + 4\Ab^6 + 35\Ab^3\Aw^3 + 4\Aw^6 + 18\Ab^4\Aw + 18\Ab\Aw^4) G^3 \\
+ &2(14\Ab^6\Aw^3 + 14\Ab^3\Aw^6 + 12\Ab^7\Aw + 39\Ab^4\Aw^4 + 12\Ab\Aw^7 + 15\Ab^5\Aw^2 + 15\Ab^2\Aw^5 + 2\Ab^6 + 4\Ab^3\Aw^3 + 2\Aw^6) G^2 \\
+& 2\Aw\Ab(\Ab^7\Aw  + \Ab\Aw^7 - 4\Ab^4\Aw^4- 6\Ab^5\Aw^2 - 6\Ab^2\Aw^5 - 2\Ab^6 - 4\Ab^3\Aw^3 - 2\Aw^6) G  \\
-&\Ab^2\Aw^2(\Ab + \Aw)(\Ab^2 - \Ab\Aw + \Aw^2)(\Ab^2\Aw^2 + \Ab^3 + \Aw^3)\Big).
\end{align*}
\label{prop: rat param}
\end{prop}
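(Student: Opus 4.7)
The plan is to deduce the proposition from the known algebraic description of the rooted planar Ising series established by Bousquet-M\'elou and Schaeffer~\cite{mbm-schaeffer-ising} (see also~\cite{eynard-book}), rather than attacking the order-3 degree-4 PDE of Section~\ref{sec:planar-check} from scratch. The starting point is that the \gf\ of rooted planar Ising cubic maps, as shown in those references, admits a rational parametrization in terms of three auxiliary unknowns living on a spectral-curve-like surface. The first step is thus to match conventions: the weights used in \cite{mbm-schaeffer-ising} live on triangulations with bicoloured faces, and are related to ours by the natural duality sending cubic maps to triangulations and exchanging vertex/face colours; this translates their parameters into the triple $(\Ab,\Aw,G)$ used here. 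It is then a bookkeeping (but tedious) exercise to express $\nub,\nuw,t$ and $\vec P$ in the form claimed, after clearing the factor $(1-\nub\nuw)^3$ that arises because our counting variable $t$ is related to the one in~\cite{mbm-schaeffer-ising} via Proposition~\ref{prop:change}.

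Once the parametrization is written down in our variables, the actual \emph{checking} of the PDE derived in Section~\ref{sec:planar-check} proceeds as follows. The PDE involves partial derivatives of $\vec P$ with respect to $t,\nub,\nuw$, but the parametrization is rational in $(\Ab,\Aw,G)$, so one must first compute the Jacobian of the map $(\Ab,\Aw,G)\mapsto (T,\nub,\nuw)$ and invert it to get $\partial_t,\partial_\bullet,\partial_\circ$ as rational combinations of $\partial_{\Ab},\partial_{\Aw},\partial_G$. Applying these operators to $\vec P=W/T^2$ iteratively (up to order $3$) yields a collection of rational fractions in $(\Ab,\Aw,G)$. Substituting into the PDE reduces the claim to an identity between two explicit rational functions, which one verifies by clearing denominators and checking that the numerator vanishes. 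This last step is a purely mechanical computation carried out in the companion \Maple\ file.

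As an independent sanity check, one can expand $T$, $\nub$, $\nuw$ as power series in $(\Ab,\Aw,G)$ near $(\Ab,\Aw,G)=(0,0,0)$ (or another convenient initial point), invert to express $(\Ab,\Aw,G)$ as series in $(t,\nub,\nuw)$, and thereby obtain the first few coefficients of $\vec P$; these can then be compared with the values produced by the recursive algorithm of Section~\ref{sec:implement} (specialised to $s=0$). Agreement on enough coefficients, combined with the fact that the PDE for $\vec P$ specialised to $s=0$ has the correct order and initial data to characterise its solution in the relevant ring, would suffice to confirm the parametrization.

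The main obstacle is computational rather than conceptual: the expression of $W$ is large and the operators involved are of order up to three, so the verification requires careful symbolic manipulation and cannot be done by hand. Consequently, the bulk of the work is delegated to a computer algebra system, and the argument above is really a framework ensuring that the output of that computation is meaningful; the novelty compared to~\cite{mbm-schaeffer-ising} lies only in the change of parameters and in cross-checking against the PDE produced by our approach.
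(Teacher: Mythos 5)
The first paragraph of your proposal heads in the right general direction: the paper does indeed deduce the parametrization from a known rational parametrization of the planar Ising one-point function, and the translation does involve duality (cubic maps $\leftrightarrow$ triangulations) and a relabelling of parameters. However, the proposal is too vague to constitute a proof, and it omits the two steps that carry the actual mathematical content. First, the known result (the paper uses Theorem~8.3.1 of~\cite{eynard-book}, precisely because the parametrizations in~\cite{BK87,mbm-schaeffer-ising} are not given in ready-to-use form) describes the generating function $W_1^{(0)}(x)$ of Ising triangulations of a \emph{disc}, as a function of the boundary variable $x$; to reach the series $\vec P$ one must extract the coefficient of $x^{-4}$, which the paper does via Lagrange inversion on the spectral-curve parametrization $(x(z),y(z))$. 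Your ``bookkeeping exercise'' glosses over this step entirely, yet it is the only place in the proof where a genuine computation happens and the explicit form of $W$ is produced. Second, $W_1^{(0)}(x)$ counts discs rooted at a \emph{black} outer face, so one must symmetrize by adding the expression with the two colours exchanged, as in equation~\eqref{vecP-W}; you never mention this. Finally, your explanation of the factor $(1-\nub\nuw)^3$ as coming from Proposition~\ref{prop:change} is misplaced: in this proof that factor arises because Eynard's generating variable counts \emph{vertices} rather than edges, and the translation $t=-T/(ab-1)^3$ is computed directly from the per-map weight, independently of the bipartite-map change of variables.

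Your second and third paragraphs are essentially off-topic: the proposition as stated asserts only the rational parametrization, not that $\vec P$ satisfies the planar PDE. Checking the PDE against this parametrization is indeed carried out in Section~\ref{sec:planar-check}, but it is a consistency check performed \emph{after} the proposition is established, not a route to proving it. In particular, the plan to ``invert the Jacobian and plug into the PDE'' would, at best, confirm that some series with the given parametrization satisfies the PDE; it would not by itself identify that series as $\vec P$ without separately controlling initial conditions and uniqueness, which is precisely what the combinatorial derivation from~\cite{eynard-book} circumvents.
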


We prove this proposition in Appendix~\ref{sec: appendix parametrization}, starting from Theorem~8.3.1 in~\cite{eynard-book}. This theorem  deals with slightly different Ising maps (in particular, maps having a boundary), and the enumeration variables also differ from ours, so it takes a bit of work to obtain the above proposition. Similar rational parametrizations have been given in the literature~\cite{BK87,mbm-schaeffer-ising}, but we could not find any ready-to-use statement. 

Let us now return to the third order PDE obtained above for $\vec P$. We perform the change of variables $(\nub, \nuw, t) \mapsto (\Ab, \Aw, G)$ of Proposition~\ref{prop: rat param} (as we did in Lemma~\ref{lem_change} to go from $p_2, q_2,z$ to $\nub, \nuw, t$), replace $\vec P$ by its rational expression in terms of $\Ab, \Aw$ and $G$, and check in {\sc Maple} that the resulting rational expression is indeed zero.

\appendix

\section{Rational parametrization for  planar rooted     maps}
\label{sec: appendix parametrization}

Our aim here is to derive the rational parametrization of Proposition~\ref{prop: rat param} from earlier results. We start from Theorem~~8.3.1 in~\cite{eynard-book}, which deals with an Ising model dual to the one considered in this paper: \emm triangulations, with spins on \emm faces,. Converted to our setting, this theorem gives a rational parametrization of the \gf \ $W^{(0)}_1(x)$  counting  vertex bicolored maps, rooted at a black vertex of arbitrary degree, while all other vertices have degree $3$.
More precisely:
\[
  W^{(0)}_1(x):=\sum_\m \frac{T^{\ff(\m)} c_{\bullet}^{\ee^{\bullet}(\m)}c_{\circ}^{\ee^{\circ}(\m)}c_{\bullet\circ}^{\ee^{\bullet\circ}(\m)}}{x^{d(\m)+1}},
\]
where the functions $\ee^\bullet$, $\ee^\circ$ and $\ee^{\bullet\circ}$ have been defined at the beginning of Section~\ref{sec:Ising},  $\ff$ is the number of faces and $d$  the degree of the root vertex.

Several steps are needed to obtain $\vec{P}$ from Theorem 8.3.1 in~\cite{eynard-book}. We refer to the {\sc Maple}  companion file for computation details. In particular:
\begin{itemize}
   \item The series  $W^{(0)}_1(x)$  keeps track of faces  rather than edges.
\item It  counts Ising \emm near-cubic, maps (with a root vertex of arbitrary degree),   so that the \gf \ $W^{(0)}$ counting true cubic maps
   is  the coefficient of $x^{-4}$ in $W^{(0)}_1(x)$. We will extract it using  Lagrange inversion.
\item The cubic maps  counted by  $W^{(0)}$  are rooted at a black vertex,
   so that we need to symmetrize by considering $W^{(0)}+W^{(0)}_
  { | c_{\circ} \leftrightarrow c_{\bullet}}  $.
 \end{itemize}
 
Let us now give some details. Theorem 8.3.1 in~\cite{eynard-book} gives a parametrization of
\beq\label{Yx-def}
Y(x):=\frac{1}{c}\left({x^2}- W^{(0)}_1(x)\right)
\eeq 
where $c_{\bullet}$, $c_{\circ}$, $c_{\bullet\circ}$ are first rewritten in terms of new variables  $a$, $b$, $c$ as follows: 
\[
c_{\bullet}= \frac{b}{ab-c^2}, \qquad
 c_{\circ}= \frac{a}{ab-c^2}, \qquad 
c_{\bullet\circ}= \frac{c}{ab-c^2}.
\]
The parameter $c$ is redundant and we take $c=-1$.
Next  we need to perform a change of variables from $(T,a,b,W^{(0)})$ to $ (t,\nub, \nuw,\vec{P})$. Since the contribution  in $W^{(0)}$ of a cubic map $\m$ having $3n$ edges is 
\begin{align*}
T^{\ff }c_{\bullet}^{\ee^{\bullet} }c_{\circ}^{\ee^{\circ} }c_{\bullet\circ}^{\ee^{\bullet\circ} }&=T^{n+2}\left(\frac{b}{ab-1}\right)^{\ee^{\bullet} }\left(\frac{a}{ab-1}\right)^{\ee^{\circ} }\left(\frac{-1}{ab-1}\right)^{3n-(\ee^{\bullet} +\ee^{\circ} )}\\
&=T^2\left(\frac{-T}{(ab-1)^3}\right)^n\left(-b\right)^{\ee^{\bullet} }\left(-a\right)^{\ee^{\circ} }
\end{align*}
(where we write $\ff$ rather than $\ff(\m)$ to lighten notation, and similarly for edge numbers), the appropriate change of variables is
\beq   
t= \frac{-T}{(ab-1)^3},\qquad 
\nub=-b, \qquad
\nuw= -a.
\label{eq: change-var-rooted-param}
\eeq                
Then the series $\vec P$ of Section~\ref{sec:planar-check} is finally
\beq\label{vecP-W}
\vec{P}=\frac{W^{(0)}}{T^2}+\left(\frac{W^{(0)}}{T^2}\right)_{| a \leftrightarrow b}.
\eeq

Let us now go back to Eynard's parametrization of $W_1^{(0)}(x)$, recalling the definition~\eqref{Yx-def} of the related series $Y(x)$. It gives $x$ and $Y(x)$ as Laurent polynomials in a parameter $z$:
\beq
\label{xy-param}
\begin{cases}
x(z)&=\gamma z +\sum_{k=0}^2\alpha_kz^{-k},\\
Y(x(z)):=y(z)&= \gamma z^{-1} +\sum_{k=0}^2\beta_kz^{k},
\end{cases}
\eeq
where the $7$
parameters $\gamma, \alpha_i$ and $\beta_i$  are coupled to $a, b$ and $T$ by the following relations:
\[
\begin{cases}
  ax(z)-x(z)^2+y(z)&\underset{z\to\infty}{\sim} \frac{T}{\gamma z}
  +\LandauO(z^{-2}),\\
by(z)-y(z)^2+x(z)&\underset{z\to 0}{\sim} \frac{Tz}{\gamma}+\LandauO(z^2).
\end{cases} 
\]
These relations allow us to  express the original variables $a, b$ and $T$, but also the $4$ parameters  $\alpha_1, \alpha_2, \beta_1, \beta_2$, as  rational functions in the  $3$ main parameters $\gamma$, $\alpha_0$, $\beta_0$. In particular,  $\alpha_2=\beta_2=\gamma^2$ and
\begin{align}
a&=\frac{\beta_0^2-2\gamma^2(\alpha_0+\beta_0^2)+8\alpha_0\gamma^4-\alpha_0^2\beta_0}{4\gamma^4-\alpha_0\beta_0}, \nonumber \\
b&=\frac{\alpha_0^2-2\gamma^2(\alpha_0^2+\beta_0)+8\beta_0\gamma^4-\alpha_0\beta_0^2}{4\gamma^4-\alpha_0\beta_0}, \label{abT-param}\\
  T&=-\frac{\gamma^2}{(4\gamma^4-\alpha_0\beta_0)^2}\cdot
     \Big(32\gamma^{{10}} - 16\gamma^{ 8} - 16\gamma^{ 6}\alpha_0 \beta_0
     +4 \left(\alpha_{0}^{2} \beta_{0}^{2}+\alpha_{0}^{3}+\beta_{0}^{3}+3 \alpha_{0} \beta_{0}\right)
     \gamma^{ 4} \nonumber\\
  &\ -2 \left(\alpha_{0}^{4} \beta_{0}+\alpha_{0} \beta_{0}^{4}+3 \alpha_{0}^{2} \beta_{0}^{2}+\alpha_{0}^{3}+\beta_{0}^{3}\right)
    \gamma^{ 2} + \alpha_0\beta_0(\alpha_0^2\beta_0^2 + \alpha_0^3 + \beta_0^3)\Big).\nonumber
\end{align}
We refer to our {\sc Maple} session for expressions of $\alpha_1$ and $\beta_1$.  Combined with~\eqref{eq: change-var-rooted-param}, the above three equations  give the expressions of $\nub, \nuw$ and $t$ stated in Proposition~\ref{prop: rat param}   (in this proposition we write $\Ab:=\alpha_0$, $\Aw:=\beta_0$, and $G:=\gamma^2$ to fit better with the notational conventions of the paper).

 Next, to obtain a parametrization for $W^{(0)}$, we can apply the Lagrange inversion formula to the system~\eqref{xy-param}. Indeed, denoting $\bx:=x^{-1}$ and $\bz:=z^{-1}$, we have:
\[
  \begin{cases}
    \bz &= \bx \left(\gamma + \alpha_0\bz + \alpha_1\bz^2 + \alpha_2\bz^3\right)
  :=\bx \, \Phi(\bz),\\
  y(z) & \displaystyle = \gamma\bz+\beta_0+\frac{\beta_1}{\bz}+\frac{\beta_2}{\bz^{2}}:= \Upsilon(\bz),
\end{cases}
\]
so that the coefficient of $\bx^4$ in $Y(x)$ is
\[
  W^{(0)}(\gamma,\alpha_0,\beta_0)
  =\frac{1}{4}[u^3]\left(\Upsilon'(u)\Phi(u)^4\right).
\]
With the help of {\sc Maple} we obtain
\begin{align*}
  W^{(0)}&=\frac{\gamma^2}{(\alpha_0\beta_0-4\gamma^4)^3}\cdot
           \left(
           192 \gamma^{18}-64 \gamma^{16}+16 \alpha_{0} \left(32 \alpha_{0}^{2}+15 \beta_{0}\right) \gamma^{14}-16 \left(3 \alpha_{0}^{2} \beta_{0}^{2}+13 \alpha_{0}^{3}-3 \beta_{0}^{3}\right) \gamma^{12}
           \right.
  \\
         &\   +4 \left(16 \alpha_{0}^{6}-54 \alpha_{0}^{4} \beta_{0}-6 \alpha_{0} \beta_{0}^{4}-39 \alpha_{0}^{2} \beta_{0}^{2}+6 \alpha_{0}^{3}-2 \beta_{0}^{3}\right) \gamma^{10}
  \\
         &\   -4 \alpha_{0}^{2} \left(8 \alpha_{0}^{3} \beta_{0}^{2}+8 \alpha_{0}^{4}-4 \alpha_{0} \beta_{0}^{3}-28 \alpha_{0}^{2} \beta_{0}-9 \beta_{0}^{2}\right) \gamma^{8}
  \\
 &\  -\alpha_{0} \left(32 \alpha_{0}^{6} \beta_{0}+8 \alpha_{0}^{3} \beta_{0}^{4}+14 \alpha_{0}^{4} \beta_{0}^{2}+6 \alpha_{0} \beta_{0}^{5}+8 \alpha_{0}^{5}+35 \alpha_{0}^{2} \beta_{0}^{3}+30 \alpha_{0}^{3} \beta_{0}+6 \beta_{0}^{4}\right) \gamma^{6}\\
&\   +\alpha_{0}^{2} \left(24 \alpha_{0}^{4} \beta_{0}^{3}+4 \alpha_{0} \beta_{0}^{6}+24 \alpha_{0}^{5} \beta_{0}+39 \alpha_{0}^{2} \beta_{0}^{4}+21 \alpha_{0}^{3} \beta_{0}^{2}+9 \beta_{0}^{5}+4 \alpha_{0}^{4}+4 \alpha_{0} \beta_{0}^{3}\right) \gamma^{4}\\
         &\   +\alpha_{0}^{3} \beta_{0} \left(2 \alpha_{0}^{5} \beta_{0}-4 \alpha_{0}^{2} \beta_{0}^{4}-9 \alpha_{0}^{3} \beta_{0}^{2}-3 \beta_{0}^{5}-4 \alpha_{0}^{4}-4 \alpha_{0} \beta_{0}^{3}\right) \gamma^{2}
-\alpha_0^5\beta_0^2(\alpha_0^2\beta_0^2 + \alpha_0^3 + \beta_0^3)
\Big).
\end{align*}
Observe on the system~\eqref{abT-param} that the exchange of $\alpha_0$ and $\beta_0$ exchanges $a$ and $b$ and leaves~$T$ unchanged. In other words, it swaps the two colors $\circ$ and $\bullet$. Thus, \eqref{vecP-W} rewrites as:
\[
\vec{P}=\frac{W^{(0)}(\gamma,\alpha_0,\beta_0)+W^{(0)}(\gamma,\beta_0,\alpha_0)}{T(\gamma,\alpha_0,\beta_0)^2}=:\frac{W(\gamma,\alpha_0,\beta_0)}{T(\gamma,\alpha_0,\beta_0)^2},
\]
where $W$ is given in Proposition~\ref{prop: rat param}.

\medskip
\noindent {\bf Acknowledgments.} We warmly thank Marc Mezzarobba for helpful discussions regarding the implementation of the algorithm of Section~\ref{sec:implement} in \textsc{SageMath}.

\bibliographystyle{abbrv}
\bibliography{Biblio}

\begin{thebibliography}{10}

\bibitem{Bender-Canfield-orientable}
E.~A. Bender and E.~R. Canfield.
\newblock The number of rooted maps on an orientable surface.
\newblock {\em J. Combin. Theory Ser. B}, 53(2):293--299, 1991.
\newblock \href{https://doi.org/10.1016/0095-8956(91)90079-Y}{doi}.

\bibitem{BLL-book}
F.~Bergeron, G.~Labelle, and P.~Leroux.
\newblock {\em Combinatorial species and tree-like structures}, volume~67 of
  {\em Encyclopedia of Mathematics and its Applications}.
\newblock Cambridge University Press, Cambridge, 1998.

\bibitem{Bershadsky}
M.~A. Bershadsky and A.~A. Migdal.
\newblock Ising model of a randomly triangulated random surface as a definition
  of fermionic string theory.
\newblock {\em Phys. Lett. B}, 174:393--398, 1986.

\bibitem{BCD-non-oriented}
V.~Bonzom, G.~Chapuy, and M.~Do{łę}ga.
\newblock Enumeration of non-oriented maps via integrability.
\newblock {\em Algebr. Comb.}, 5(6):1363--1390, 2022.
\newblock \href{https://arxiv.org/abs/2110.12834}{arXiv:2110.12834},
  \href{https://doi.org/10.5802/alco.268}{doi}.

\bibitem{BK87}
D.~V. Boulatov and V.~A. Kazakov.
\newblock The {I}sing model on a random planar lattice: the structure of the
  phase transition and the exact critical exponents.
\newblock {\em Phys. Lett. B}, 186(3-4):379--384, 1987.

\bibitem{mbm-schaeffer-ising}
M.~Bousquet-M{\'e}lou and G.~Schaeffer.
\newblock The degree distribution of bipartite planar maps: applications to the
  {I}sing model.
\newblock In K.~Eriksson and S.~Linusson, editors, {\em Formal Power Series and
  Algebraic Combinatorics}, pages 312--323, Vadstena, Sweden, 2003.
\newblock Long version on
  \href{http://arxiv.org/abs/math/0211070}{arXiv:math/0211070}.

\bibitem{BudzLouf}
T.~Budzinski and B.~Louf.
\newblock Local limits of uniform triangulations in high genus.
\newblock {\em Invent. Math.}, 223(1):1--47, 2021.
\newblock \href{https://arxiv.org/abs/2012.05813}{arXiv:2012.05813},
  \href{https://doi.org/10.1007/s00222-020-00986-3}{doi}.

\bibitem{CarrellChapuy}
S.~R. Carrell and G.~Chapuy.
\newblock Simple recurrence formulas to count maps on orientable surfaces.
\newblock {\em J. Comb. Theory, Ser. A}, 133:58--75, 2015.
\newblock \href{https://arxiv.org/abs/1402.6300}{arXiv:1402.6300},
  \href{https://doi.org/10.1016/j.jcta.2015.01.005}{doi}.

\bibitem{chapuy-structure-unicellular}
G.~Chapuy.
\newblock The structure of unicellular maps, and a connection between maps of
  positive genus and planar labelled trees.
\newblock {\em Probab. Theory Related Fields}, 147(3-4):415--447, 2010.
\newblock \href{https://arxiv.org/abs/0804.0546}{arXiv:0804.0546},
  \href{https://doi.org/10.1007/s00440-009-0211-0}{doi}.

\bibitem{chapuy-new}
G.~Chapuy.
\newblock A new combinatorial identity for unicellular maps, via a direct
  bijective approach.
\newblock {\em Adv. in Appl. Math.}, 47(4):874--893, 2011.
\newblock \href{https://arxiv.org/abs/1006.5053}{arXiv:1006.5053},
  \href{https://doi.org/10.1016/j.aam.2011.04.004}{doi}.

\bibitem{CFF13}
G.~Chapuy, V.~F\'eray, and E.~Fusy.
\newblock A simple model of trees for unicellular maps.
\newblock {\em J. Combin. Theory Ser. A}, 120(8):2064--2092, 2013.
\newblock \href{https://arxiv.org/abs/1202.3252}{arXiv:1202.3252},
  \href{https://doi.org/10.1016/j.jcta.2013.08.003}{doi}.

\bibitem{DFGZJ}
P.~Di~Francesco, P.~Ginsparg, and J.~Zinn-Justin.
\newblock {$2$}{D} {G}ravity and random matrices.
\newblock {\em Phys. Rep.}, 254(1-2):1--133, 1995.
\newblock \href{https://arxiv.org/abs/hep-th/9306153}{arXiv:hep-th/9306153},
  \href{https://doi.org/10.1016/0370-1573%2894%2900084-G}{doi}.

\bibitem{DYZ}
B.~Dubrovin, D.~Yang, and D.~Zagier.
\newblock Classical {Hurwitz} numbers and related combinatorics.
\newblock {\em Mosc. Math. J.}, 17(4):601--633, 2017.

\bibitem{eynard2matrix}
B.~Eynard.
\newblock Large-{$N$} expansion of the 2-matrix model.
\newblock {\em J. High Energy Phys.}, 2003(1):051, 38 pp., 2003.
\newblock \href{https://arxiv.org/abs/hep-th/0210047}{arXiv:hep-th/0210047},
  \href{https://doi.org/10.1088/1126-6708/2003/01/051}{doi}.

\bibitem{eynard-book}
B.~Eynard.
\newblock {\em Counting surfaces}, volume~70 of {\em Progress in Mathematical
  Physics}.
\newblock Birkh\"auser/Springer, [Cham], 2016.
\newblock CRM Aisenstadt chair lectures.

\bibitem{goulden-jackson-KP}
I.~P. Goulden and D.~M. Jackson.
\newblock The {KP} hierarchy, branched covers, and triangulations.
\newblock {\em Adv. Math.}, 219(3):932--951, 2008.
\newblock \href{https://arxiv.org/abs/0803.3980}{arXiv:0803.3980},
  \href{http://dx.doi.org/10.1016/j.aim.2008.06.013}{doi}.

\bibitem{goupil-schaeffer}
A.~Goupil and G.~Schaeffer.
\newblock Factoring {$n$}-cycles and counting maps of given genus.
\newblock {\em European J. Combin.}, 19(7):819--834, 1998.
\newblock \href{https://doi.org/10.1006/eujc.1998.0215}{doi}.

\bibitem{Ising}
E.~Ising.
\newblock Beitrag zur theorie des ferromagnetismus.
\newblock {\em Z. Phys.}, 31, 1925.

\bibitem{Ka86}
V.~A. Kazakov.
\newblock Ising model on a dynamical planar random lattice: exact solution.
\newblock {\em Phys. Lett. A}, 119(3):140--144, 1986.

\bibitem{KKN}
V.~A. Kazakov, I.~K. Kostov, and N.~Nekrasov.
\newblock D-particles, matrix integrals and {KP} hierarchy.
\newblock {\em Nucl. Phys., B}, 557(3):413--442, 1999.
\newblock \href{https://arxiv.org/abs/hep-th/9810035}{arXiv:hep-th/9810035},
  \href{https://doi.org/10.1016/S0550-3213(99)00393-4}{doi}.

\bibitem{Kazarian-Zograf}
M.~Kazarian and P.~Zograf.
\newblock Virasoro constraints and topological recursion for {G}rothendieck's
  dessin counting.
\newblock {\em Lett. Math. Phys.}, 105(8):1057--1084, 2015.
\newblock \href{https://arxiv.org/abs/1406.5976}{arXiv:1406.5976},
  \href{https://doi.org/10.1007/s11005-015-0771-0}{doi}.

\bibitem{Kontsevich}
M.~Kontsevich.
\newblock Intersection theory on the moduli space of curves and the matrix
  {Airy} function.
\newblock {\em Commun. Math. Phys.}, 147(1):1--23, 1992.
\newblock \href{http://projecteuclid.org/euclid.cmp/1104250524}{doi}.

\bibitem{Lenz}
W.~Lenz.
\newblock {Beitrag zum Verständnis der magnetischen Erscheinungen in festen
  Körpern}.
\newblock {\em Z. Phys.}, 21:613--615, 1920.

\bibitem{Louf-bijection}
B.~Louf.
\newblock A new family of bijections for planar maps.
\newblock {\em J. Combin. Theory Ser. A}, 168:374--395, 2019.
\newblock \href{https://arxiv.org/abs/1806.02362}{arXiv:1806.02362},
  \href{https://doi.org/10.1016/j.jcta.2019.06.006}{doi}.

\bibitem{LoufFormula}
B.~Louf.
\newblock Simple formulas for constellations and bipartite maps with prescribed
  degrees.
\newblock {\em Can. J. Math.}, 73(1):160--176, 2021.
\newblock \href{https://arxiv.org/abs/1904.05371}{arXiv:1904.05371}.

\bibitem{MiwaJimboDate}
T.~Miwa, M.~Jimbo, and E.~Date.
\newblock {\em Solitons: differential equations, symmetries and infinite
  dimensional algebras. {Transl}. from the {Japanese} by {Miles} {Reid}},
  volume 135 of {\em Camb. Tracts Math.}
\newblock Cambridge: Cambridge University Press, reprint of the 2000 hardback
  edition edition, 2012.

\bibitem{mullin-nemeth-schellenberg}
R.~C. Mullin, E.~Nemeth, and P.~J. Schellenberg.
\newblock The enumeration of almost cubic maps.
\newblock In {\em Proc. {L}ouisiana {C}onf. on {C}ombinatorics, {G}raph
  {T}heory and {C}omputing}, pages 281--295. Louisiana State Univ., Baton
  Rouge, La., 1970.

\bibitem{Okounkov}
A.~Okounkov.
\newblock Toda equations for {Hurwitz} numbers.
\newblock {\em Math. Res. Lett.}, 7(4):447--453, 2000.
\newblock \href{https://arxiv.org/abs/math/0004128}{arXiv:math/0004128},
  \href{https://doi.org/10.4310/MRL.2000.v7.n4.a10}{doi}.

\bibitem{PiSaSo}
C.~Pivoteau, B.~Salvy, and M.~Soria.
\newblock Algorithms for combinatorial structures: well-founded systems and
  {N}ewton iterations.
\newblock {\em J. Combin. Theory Ser. A}, 119(8):1711--1773, 2012.
\newblock \href{https://arxiv.org/abs/1109.2688}{arXiv:1109.2688},
  \href{https://doi.org/10.1016/j.jcta.2012.05.007}{doi}.

\bibitem{schabanel-bij}
J.~Schabanel.
\newblock Slit-slide-sew bijections for planar bipartite maps with prescribed
  degree.
\newblock \href{https://arxiv.org/abs/2502.07582}{arXiv:2502.07582}, 2025.

\bibitem{tutte-triangulations}
W.~T. Tutte.
\newblock A census of planar triangulations.
\newblock {\em Canad. J. Math.}, 14:21--38, 1962.
\newblock \href{https://doi.org/10.4153/CJM-1962-002-9}{doi}.

\bibitem{tutte-census-maps}
W.~T. Tutte.
\newblock A census of planar maps.
\newblock {\em Canad. J. Math.}, 15:249--271, 1963.
\newblock \href{https://doi.org/10.4153/CJM-1963-029-x}{doi}.

\bibitem{tutte-general}
W.~T. Tutte.
\newblock On the enumeration of planar maps.
\newblock {\em Bull. Amer. Math. Soc.}, 74:64--74, 1968.

\bibitem{walsh-lehman-I}
T.~R.~S. Walsh and A.~B. Lehman.
\newblock Counting rooted maps by genus. {I}.
\newblock {\em J. Combinatorial Theory Ser. B}, 13:192--218, 1972.
\newblock \href{https://doi.org/10.1016/0095-8956(72)90056-1}{doi}.

\bibitem{Witten}
E.~Witten.
\newblock Two-dimensional gravity and intersection theory on moduli space.
\newblock In {\em Surveys in differential geometry. Vol. I: Proceedings of the
  conference on geometry and topology, held at Harvard University, Cambridge,
  MA, USA, April 27-29, 1990}, pages 243--310. Providence, RI: American
  Mathematical Society; Bethlehem, PA: Lehigh University, 1991.

\end{thebibliography}

\end{document}